\documentclass{gtpart}     % Basic GT/GTM/AGT style
\usepackage{mathrsfs}
\usepackage{amsfonts}
\usepackage{pinlabel}  %%% the recommended graphics+labelling package
\usepackage{graphicx}  %%% the recommended graphics package
\usepackage[all]{xy}
\usepackage{color}
\usepackage[english]{babel}
\usepackage{amscd}

\title{Diffeomorphism types of simply connected $3$-dimensional Mori fibre spaces}

\author{Feng Hao}
\givenname{Feng}
\surname{Hao}
\address{School of Mathematics, Shandong University, Jinan 250100, China}
\email{feng.hao@sdu.edu.cn}
\author{Yang Su}
\givenname{Yang }
\surname{Su}
\address{State Key Laboratory of Mathematical Sciences, Academy of Mathematics and Systems Science, Chinese Academy of Sciences; University of Chinese Academy of Sciences, Beijing 100190, China}
\email{suyang@math.ac.cn}

\author{Jianqiang Yang}
\givenname{Jianqiang}
\surname{Yang}
\address{School of Mathematical Sciences, Guangxi Minzu University, Nanning, 530006,  China.}
\email{yangjq\_math@sina.com}
\title[Mori fibre spaces]{Diffeomorphism types of simply connected $3$-dimensional Mori fibre spaces}
\keyword{}
\newtheorem{thm}{Theorem}[section]    % Standard theorem environment
\newtheorem{lem}[thm]{Lemma}          % Lemma environment with numbering
\newtheorem{proposition}{Proposition}
\newtheorem{corollary}{Corollary}
\theoremstyle{definition}
\newtheorem{defn}[thm]{Definition}    % Definition environment with
\newtheorem{rem}{Remark}             % Unnumbered environment for remarks

\newcommand{\p}{\mathbb P}
\newcommand{\z}{\mathbb Z}
\newcommand{\q}{\mathbb Q}

\newcommand{\oo}{\mathcal O}

\begin{document}

\begin{abstract}    % type your abstract below
In this article, we find finitely many numerical invariants to classify the diffeomorphism types of three dimensional simply connected Mori fibre spaces with torsion free homology groups. 
\end{abstract}

\maketitle
\section{Introduction}

Fano varieties, Calabi-Yau varieties and canonically polarized varieties are simple building blocks in the minimal model program. It is always interesting to understand their differential topological type.  The classification of algebraic varieties up to diffeomorphism is considered to be very helpful for the understanding of algebro-geometric structures on these varieties.
A celebrated result of Koll\'ar-Miyaoka-Mori \cite{KMM92} shows that smooth projective Fano varieties of a given dimension form a bounded family. In particular Fano varieties of a given dimension have only finitely many diffeomorphism or even deformation types. An outstanding well-know open question asks whether there are only finitely many diffeomorphism types of smooth Calabi-Yau varieties of a given dimension $\geq 3$ (The dimension $2$ case is known). However, one can not even control the topological type of canonically polarized varieties.

For lower dimensional Fano varieties, we know that the only one-dimensional Fano
manifold is $\mathbb{P}^1$; two dimensional Fano manifolds, i.e., the del Pezzo
surfaces, lay in 10 diffeomorphism families (see e.g. \cite[p.112]{MiPe97}); Smooth projective Fano 3-folds admit 105 diffeomorphism or even deformation types due to Isikovski and Mori-Mukai \cite{Isk77}\cite{Isk78}\cite{MoMu81}. Very little is known about the diffeomorphism types of Fano manifolds in
higher dimensions. 

In this article, we study the diffeomorphism types of three dimensional Mori fibre spaces (see Definition \ref{def:mfs}, MFS for short), which are basic outputs after running the minimal model program. Specifically, we are interested in three dimensional MFSs with two shared key properties, namely, simply connectedness and homological torsion freeness, with Fano threefolds. Roughly speaking, we find \textit{finitely many numerical invariants} (topological or algebraic geometric) such as the topological Euler characteristic $e(X)$, the holomorphic Euler characteristic $\chi(\oo_X)$, and the characteristic number $K_X^3$, to classify the diffeomorphism types of three dimensional simply connected MFSs  $X$  with torsion free homologies. Our results are sharp in the sense that making the assumptions ``simply connected'' and ``torsion free homology'' is not for technical reasons. In fact, one cannot even find finitely many numerical invariants to classify the fundamental groups or the torsion part of the third cohomology group of 3 dimensional MFSs in general.  Moreover, unlike Fano threefolds, there are infinitely many diffeomorphism types of simply connected three dimensional MFSs with torsion free homologies. For example, consider the simply connected smooth ruled varieties that are birational to  $\mathbb{P}^1\times S$, where $S$ are simply connected surfaces.  The best one can do is to find finitely many numerical invariants to classify them. The primary advantage of this approach is its high practicality, as it significantly simplifies the process of determining whether two MFS are diffeomorphic.

Our main theorem is stated for  $3$-dimensional smooth Mori fiber spaces (MFS for short, see Definition \ref{def:mfs}) as follows. The notations and  invariants appeared Theorem \ref{thm:main} will be explained after the statement of the theorem.

\begin{thm}\label{thm:main}
Let $f \colon X \to Y$, $f' \colon X' \to Y'$ be simply connected $3$-dimensional MFSs with torsion free homology groups. 
\begin{enumerate}
\item If $\dim Y \ne \dim Y'$, then $X$ is not diffeomorphic to $X'$, with the only exception when $X$ is diffeomorphic to  $\p^2 \times \p^1$, or  when $X$ is diffeomorphic to the Fano threefold $\mathbb X$ with Betti numbers $b_2(\mathbb X)=2, \ b_3(\mathbb X)=40$. 
\item If $\dim Y=\dim Y'=0$, then $b_2(X)=b_2(X')=1$, and $X$ is oriented diffeomorphic to $X'$ if and only if they have the same degree and Euler characteristic.
\item If $\dim Y=\dim Y'=1$ and $X$ is oriented diffeomorphic to $X'$, then $K(X,Y)=K(X',Y')$.
\begin{enumerate}
\item When $K(X,Y)=9$,  $X$ is diffeomorphic to $\p^2 \times \p^1$ or $\p(\oo \oplus \oo(-1))$. 
\item When $K(X,Y)=8$, $X$ is oriented diffeomorphic to $X'$ if and only if $K_X^3=K_{X'}^3$. 
\item When $1 \le K(X,Y) \le 5$, $X$ is oriented diffeomorphic to $X'$ if and only if $e(X)=e(X')$, $K_{X/Y}^3=\pm K_{X'/Y'}^3$.
\item When $K(X, Y)=6$,  $X$ is oriented diffeomorphic to $X'$ if and only if $d(X,Y)=d(X',Y')$, $e(X)=e(X')$, and 
$$K_{X/Y}^3=K_{X'/Y'}^3 \ \ \mathrm{or}  \ \ K_{X/Y}^3=-K_{X'/Y'}^3+\frac{12(d(X,Y)-1)}{d(X,Y)} K(X,Y).$$
\end{enumerate}

\item If $\dim Y = \dim Y'=2$, when $f$ is a smooth fibration while $f'$ is singular, then $X$ is not diffeomorphic to $X'$.
\begin{enumerate}
\item Assume both $f$ and $f'$ are smooth fibrations, then $X$ is oriented diffeomorphic to $X'$ if and only if they have the same $w_2$-type, $e(X)=e(X')$, and one of the following conditions is fulfilled
\begin{equation}
     \chi(\oo_X) = \chi(\oo_{X'}), \ \ K_X^3=K_{X'}^3; \tag{A}
     \label{eq:A}
\end{equation} 
or
\begin{equation}
    \chi( \oo_X) = \frac{1}{4}e(X')-\chi(\oo_{X'}), \ \ K_X^3=-K_{X'}^3-12e(X'). \tag{A'}
    \label{eq:A'}
\end{equation}

In this case $b_3(X)=0$.

\item Assume the discriminant curves $C_f$ and $C_f'$ are both non-empty. If $X$ is oriented diffeomorphic to $X'$, then they have the same $w_2$-type, $e(X)=e(X')$, $b_3(X)=b_3(X')$,  $[C_f]$ and $[C_{f'}]$ have the same divisibility and type in $H_2(X,\z)$ and $H_2(X',\z)$, respectively,   
and one of the following conditions is fulfilled
\begin{equation}
\chi( \oo_X) = \chi(\oo_{X'}), \ \  K_X^3=K_{X'}^3, \\
\text{$[C_f]$ and $[C_{f'}]$ have the same norm}
\tag{B}
\label{eq:B}
\end{equation}  
or
\begin{equation}
\begin{aligned}
\chi(\oo_X)  = \frac{1}{4}(e(X)-b_3(X))- \chi(\oo_{X'}), \\
K_X^3  = -12e(X) + 18b_3(X)- K_{X'}^3 \\
\text{$[C_f]$ and $[C_{f'}]$ have opposite norm}
\end{aligned}
\tag{B'}
\label{eq:B'}
\end{equation}

Conversely, except for the case $\chi( \oo_X)=1$ and $b_2(X) \le 10$,  these necessary conditions are sufficient for $X$ being oriented diffeomorphic to $X'$. In the exceptional range, these conditions determine the oriented diffeomorphim type up to finitely many possibilities.
\end{enumerate} 

These invariants satisfy the inequality $e(X)+b_3(X) \ge 6\chi(\oo_X)$, the equality holds if and only if $b_2(X)=2$ (i.e., $Y=\p^2$) and $f$ is a smooth fibration. Moreover,  if $b_2(X)$ is even or $e(X)+b_3(X) \ge 12\chi(\oo_X)$, the cases \eqref{eq:A'} and \eqref{eq:B'} cannot occur. 
\end{enumerate}

\end{thm}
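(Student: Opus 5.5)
The overall strategy is to reduce everything to the classification of simply connected closed smooth $6$-manifolds with torsion free homology, due to Wall and Jupp. Such an $X$ is determined up to oriented diffeomorphism by $H^2(X;\z)$, $b_3(X)$, the trilinear cup form $\mu$ on $H^2$, $w_2$, and $p_1$ viewed as a linear form on $H^2$. These data must satisfy the standard relation $\mu(x,x,x)\equiv \langle p_1\cup x\rangle + \dots$ modulo $6$ coming from the spin index theorem. The work therefore consists of computing these invariants for each type of MFS, and then showing that the listed numerical invariants pin down the isomorphism class of the system $(H^2,\mu,w_2,p_1)$. For Fano threefolds we use $p_1 = c_1^2-2c_2$ together with Riemann--Roch, $\chi(\oo_X)=\frac{1}{24}c_1c_2$. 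For $\dim Y\ge 1$ we use Noether's formula on the base and the relation $e(X)=e(F)e(Y)+(\text{correction from singular fibres})$.

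\emph{Dimension $0$ and the comparison between different $\dim Y$.} For $\dim Y=0$, $X$ is Fano with $b_2=1$. Then $\mu$ is determined by the degree, $w_2$ by the parity of the index, and $p_1$ by $-K^3$ and $\chi(\oo_X)=1$. Jupp's theorem gives part (2). For part (1), the base dimension is read off from the cup form. Over a curve, $H^2$ is generated by the fibre class $F$ (a del Pezzo fibration) with $F^2=0$. Over a surface, the pullback of $H^2(Y)$ consists of classes whose cubes all vanish and which span a rank $b_2-1$ subspace with $x^2y$ vanishing. A Fano with $b_2=1$ has nonzero cube. So we compare the ranks of the "null cone" structures, the maximal subspaces $V$ with $\mu|_{V\times V\times H}=0$ and so on. When the different structures could coincide, namely $b_2=2$, we use the Mori--Mukai list to isolate the exceptions $\p^2\times\p^1$ and the $b_3=40$ Fano.

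\emph{Dimension $1$.} Here $X$ is a del Pezzo fibration over $\p^1$ of degree $K(X,Y)=K_F^2$. Then $H^2$ has rank $2$, spanned by $F$ and a class $H$ restricting to $-K_F$ up to divisibility $d(X,Y)$. We compute $\mu$ in this basis: $F^2=0$, $FH^2=K(X,Y)/d^2$-type, and $H^3$ related to $K_{X/Y}^3$. The value of $H^3$ is defined modulo the change of basis $H\mapsto \pm H+nF$. This ambiguity yields exactly the sign and shift in case (d), and $p_1$ is fixed by $e$ and $\chi(\oo_X)=1$. Cases (a) and (b) are the $\p^2$-bundle and quadric bundle cases, where $b_3$ and the divisibility are forced.

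\emph{Dimension $2$.} Here $X$ is a conic bundle over a rational surface $Y$. If $f$ is smooth, $X=\p(E)$ and $H^2=H^2(Y)\oplus\z\xi$. The cup form comes from $\xi^2=-c_1\xi-c_2$. The normalisation $E\mapsto E^*$ or twisting changes the data, and the two normal forms give (A) and (A'). In the singular case we relate $b_3$ to the genus of the discriminant curve $C_f$ and apply $e(X)=2e(Y)-e(C_f)$. The class $[C_f]$ enters $\mu$ through $\xi^2$. Reducing the resulting lattice problem to Wall's classification of the form on $H^2(Y)$ restricted by $[C_f]$ gives (B) and (B'). The converse uses the classification of indefinite odd and even unimodular lattices and the transitivity of their automorphism groups on primitive vectors of given norm, type and divisibility (Wall). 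This requires indefiniteness and rank large enough, which fails exactly when $\chi=1$ and $b_2\le 10$. There, finiteness of orbits still gives finitely many types. The inequality $e+b_3\ge 6\chi$ follows from Noether's formula on $Y$ together with $b_3=2g(C_f)$-type contributions.

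\textbf{Main obstacle.} The hardest step is the lattice-theoretic step in the singular conic bundle case, namely showing that two systems of invariants agreeing in the stated data are isomorphic via an automorphism of $H^2(Y)$ extended over $\xi$. I would first try to reduce it to Wall's theorem on orbits of primitive vectors in unimodular lattices.
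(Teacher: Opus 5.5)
Your framework (Wall--Jupp, then computing $H^2$, the cubic form, $w_2$ and $p_1$ case by case) is the paper's, but several decisive steps fail or are missing.

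\textbf{Part (1).} The Mori--Mukai list cannot isolate the exceptions. A del Pezzo fibration over $\p^1$ or a conic bundle over $\p^2$ is in general not Fano, and the claim concerns diffeomorphisms between two different varieties, so the list can supply examples but never the exclusion. The paper argues topologically.
\begin{enumerate}
\item The primitive fibre class $y'$ of $X'\to\p^1$ has $y'^2=0$. Pulling it back forces the conic-bundle relation $x^2-c_1xy+c_2y^2=0$ to be a perfect square ($c_1=\mp 2b$, $c_2=b^2$), so the even cohomology ring is that of $\p^2\times\p^1$.
\item The $x'^2$-coefficient $s$ of $p_1(X')$ detects the fibre type (table (\ref{eq:table})). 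It must equal the value transported from the conic bundle: $3$ if $C_f=\emptyset$, and $3-3b^2$ otherwise.
\item This leaves $k=3$, and then $\mathcal P^1$ gives $\p^2\times\p^1$. Or it leaves $b=\pm 2$, fibre $\p^2\#7\overline{\p^2}$, $b_3=40$, a single type realized by $\mathbb X$. The quadric case $b=\pm1$ is excluded by Proposition~\ref{prop:top2}(4).
\end{enumerate}

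\textbf{Dimension $1$.} The same $p_1$ input is missing here. The change $H\mapsto\pm H+nF$ is not a free ambiguity; treated as one, it only gives $x^3$ modulo $3\langle x^2y,[X]\rangle$. You need three things:
\begin{enumerate}
\item $g^*$ fixes the primitive class $y$, which follows from $y^2=0$ and the independence of $x^2,xy$.
\item $p_1=sx^2+txy$, where $s$ is determined by the fibre (this is what makes $K(X,Y)$ a diffeomorphism invariant) and $t$ is determined by $x^3$ via Hirzebruch--Riemann--Roch.
\item Preserving $p_1$ then forces $(3-s)n=0$ or $(3-s)n=96/\langle x^2y,[X]\rangle$, i.e.\ $n=0$ or $n=4d(X,Y)$.
\end{enumerate}
Dropping $d$ in (c) further needs $d(X,Y)=1$ for $K(X,Y)\neq 6$. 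This is an algebro-geometric fact about the singular fibres (Proposition~\ref{prop:y1h2}).

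\textbf{Dimension $2$.} Your source for (A$'$) and (B$'$) is wrong. Replacing $E$ by $E^*$ or $E\otimes L$ does not change $X$ at all; indeed $x=2u-c_1=-c_1(T_{X/Y})$ is canonical. So it cannot produce an alternative with $\chi(\oo_X)\neq\chi(\oo_{X'})$. The alternative arises because an orientation-preserving diffeomorphism may send $x\mapsto -x'$, inducing an anti-isometry $H^2(Y)\to H^2(Y')$. Then $\mathrm{sign}(Y)=-\mathrm{sign}(Y')$, Lemma~\ref{lem:sign} gives $4(\chi(\oo_X)+\chi(\oo_{X'}))=2e(Y)$, and $[C_f]$, $[C_{f'}]$ acquire opposite norms.

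Underneath this is a step you never address: that $g^*$ maps $H^2(Y)$ onto $H^2(Y')$ at all.
\begin{itemize}
\item For $b_2(Y)\ge 3$, the paper shows $g^*\sigma_Y$ is a multiple of $\sigma_{Y'}$, since a subgroup of rank $b_2(Y)-1>b_2(Y)/2$ cannot be isotropic.
\item For $b_2(Y)\le 2$, it uses $\ker(\,\cdot\cup p_1)$ together with explicit lattice computations.
\end{itemize}
Only after this does $x^2y_i=0$ force $g^*x=\pm x'$. Sufficiency in the smooth case also needs a mod-$2$ version of Wall's theorem (Lemma~\ref{lem:quadratic}) to match $c_1(E)$ modulo $2$.

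\textbf{Smaller points.}
\begin{itemize}
\item $e(X)=2e(Y)+2-2p_a(C_f)$; singular fibres add to the Euler characteristic, so your formula has the wrong sign.
\item The inequality $e+b_3\ge 6\chi$ and the exclusion of (A$'$), (B$'$) need the Miyaoka--Yau inequality $c_1^2\le 3c_2$ on $Y$, the latter applied to both orientations. Noether's formula alone gives no inequality.
\item The exceptional range comes from $(1)\oplus q(-1)$ with $q\ge 8$, where Wall's transitivity fails. So the obstruction is large rank, not small; the paper's proof has $b_2(X)\ge 10$.
\item Part (2) needs the Iskovskikh classification to recover the index from the degree and $e(X)$.
\end{itemize}
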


We fix some standard notations. Let $ X $ be a MFS, together with the extremal contraction $f \colon X \to Y$. The canonical divisor of $X$ is denoted by $K_X$, and $K_{X/Y}=K_X-f^*K_Y$ is the relative canonical divisor. We have characteristic numbers $K_X^3=-\langle c_1(X)^3, [X] \rangle$ and  $K_{X/Y}^3=- \langle (c_1(X)-f^*c_1(Y))^3, [X] \rangle$, where $c_1(X)$ is the first Chern class of $X$. 

The $i$-th Betti number of $X$ is denoted by $b_i(X)$, the topological Euler characteristic of $X$ is denoted by $e(X)$, and the holomorphic Euler characteristic of the structure sheaf $\oo_X$ of $X$ is denoted by $\chi(\oo_X)$. 

The \emph{divisibility} of a nonzero element $a$ in a free abelian group $A$ is the unique positive integer $d$ for which $a = d a'$ with $a'$ primitive. Let $\cdot \colon A \times A \to \z$ be a unimodular symmetric bilinear  form over $\z$. The type of a primitive element $a$ is defined as follows: $a$ is \emph{characteristic} if $a \cdot b \equiv b \cdot b \pmod 2$ for all $b \in A$; otherwise $a$ is \emph{ordinary}. The type of an arbitrary element $a$ is the type of the corresponding primitive element $a'$. The \emph{norm} of $a$ is $a \cdot a$ (see \cite{Wall62}). 

We give a brief explanation for the notations and invariants involved in the statement of the theorem. For the detailed description, we refer to the later sections. 

In (1), the Fano threefold $\mathbb X$ with Betti numbers $b_2(X)=2$, $b_3(X)=40$ is the space No. 2 in \cite[Table 12.3]{PS99}.  For a more detailed description   see \S \ref{subsec:mixed}.

In (2),  $X$ is a Fano threefold with Picard number $\rho(X)=1$ and $b_2(X)=1$. The \emph{degree} of $X$   is the nonnegative integer defined as $\langle x^3, [X]\rangle$, where $x \in H^2(X,\mathbb{Z})$ is a generator. In this case the statement of the theorem is a consequence of the stronger deformation classification \cite[Table, p.215]{PS99}.

In (3),  $d(X,Y)$ is the divisibility of $f^*(\sigma_Y)$ in $H^2(X;\z)$, where $\sigma_Y$ is the cohomology fundamental class of $Y$. The invariant $K(X,Y)$ is defined as $K(X,Y)=d(X,Y)(K_{X/Y}^3-K_X^3)/6$. It takes value in $\{1, 2, \cdots, 6, 8, 9\}$. This invariant determines the diffeomorphism type of the smooth fiber $F$ (see equation (\ref{eqn:k})) and is a diffeomorphism invariant of $X$ (Proposition \ref{lem:fiber}). It can be shown that the integer $d(X,Y)$ equals to the product of the orders of the torsion subgroup of $H^2(F_s,\z)$ of all singular fibers $F_s$, and equals to $1$ when $K(X,Y) \ne 6$ (Proposition \ref{prop:y1h2}). The space $\p(\mathcal O \oplus \mathcal O (-1))$ is the projective bundle of the vector bundle $\mathcal O \oplus \mathcal O(-1)$ over $\p^1$.
	
In (4), $[C_f] \in H_2(Y,\mathbb{Z})$ denotes the homology class represented by the discriminant curve $C_f$ of $f \colon X \to Y$. Denote by $w_2(X)$ and $w_2(Y)$ the second Stiefel-Whitney classes of $X$ and $Y$, respectively. Since $f^* \colon H^2(Y, \mathbb Z/2) \to H^2(X, \z/2)$ is injective (see equation (\ref{eq:leray1}) and Proposition \ref{prop:h2}), we may regard $w_2(Y)$ as in $H^2(X, \z/2)$. The $w_2$-\emph{type} of an MFS $f \colon X \to Y$ is defined as follows, reflecting the relation between $w_2(X)$ and $w_2(Y)$.
	\begin{enumerate} 
		\item Type $0$: if $w_2(X)=w_2(Y)=0$, i.e., both $X$ and $Y$ are spin; 
		\item  Type $I$: if $w_2(X) = 0$, $w_2(Y) \ne 0$; 
		\item  Type $II$: if $w_2(X) \ne 0$, $w_2(Y)=0$; 
		\item Type $III_0$: if both $X$ and $Y$ are non-spin, and $w_2(X) - w_2(Y)=0$; 
		\item Type $III_1$: if both $X$ and $Y$ are non-spin, and $w_2(X)-w_2(Y) \ne 0$.
	\end{enumerate} 
We summarize the information in the following table. 	
\begin{center}
\begin{tabular}{|c|c|c|}
\hline
$w_2(X)$ $\backslash$ $w_2(Y)$ & zero & non-zero \\
\hline
zero & type $0$ & type $I$ \\
\hline 
non-zero & type $II$  & type $III_0$:  $w_2(X)-w_2(Y) =0$  \\ \cline{3-3}
& & type $III_1$:  $w_2(X)-w_2(Y)  \ne 0$ \\
\hline

\end{tabular}
\end{center} 

\begin{rem}
In Theorem \ref{thm:main}(4), the cases~\eqref{eq:A'} and \eqref{eq:B'} are ruled out when the invariants are restricted in certain range. For example, a direct calculation shows that if $\chi(\oo_X)=1$, or $\chi(\oo_X)=2$ and $\sum_{\mathrm{even}}b_i(X) \ne 20$, then these cases cannot occur.
\end{rem}

We mention a few applications of the main theorem. Examples of simply connected  projective complex surfaces that are homotopy equivalent to a $K3$ surface but not diffeomorphic to each other are given in \cite{BHPV} and \cite{Kod70}. A direct calculation shows that the projective bundles of the tangent bundles of these surfaces have equal invariants in Theorem \ref{thm:main} (4a), hence they are all diffeomorphic. More general is the following corollary.

\begin{corollary}\label{cor:main-cor}
Let $S_1$ and $S_2$ be two simply connected smooth projective surfaces. If $S_1$ is homotopic equivalent to $S_2$, then the projective bundles of the cotangent bundles or tangent bundles of $S_1$ and $S_2$ are diffeomorphic to each other.
\end{corollary}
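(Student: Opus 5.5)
The plan is to reduce Corollary~\ref{cor:main-cor} to Theorem~\ref{thm:main}(4a).

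\textbf{Setting up the fibrations.} Let $X_i=\p(T_{S_i})$, or $\p(\Omega_{S_i})$, with projection $f_i\colon X_i\to S_i$. As a smooth manifold, $\p(\Omega_{S_i})$ is the projectivization of the conjugate bundle, so it is diffeomorphic to $\p(T_{S_i})$. It therefore suffices to treat $\p(T_{S_i})$.

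If $S_i$ is rational, then $S_i$ is one of finitely many explicit surfaces ($\p^2$, $\p^1\times\p^1$, or blow-ups of $\p^2$), and homotopy equivalence already forces diffeomorphism. So the conclusion is immediate in that case.

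In general, $f_i$ is a $\p^1$-bundle with relative Picard number one. Hence $X_i$ is simply connected and has torsion-free homology: by Leray--Hirsch, $H^*(X_i)\cong H^*(S_i)[h]/(h^2+c_1h+c_2)$, and $H^*(S_i)$ is torsion free since $S_i$ is simply connected.

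The catch is that $-K_{X_i}$ must be $f_i$-ample for $f_i$ to be an MFS. This holds: on a fibre, $-K_{X_i}$ restricts to $\oo(2)$. Strictly, however, Theorem~\ref{thm:main} is stated for MFSs. I would note that the proof of (4a) only uses the smooth $\p^1$-bundle structure together with the numerical invariants, so it applies equally here.

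\textbf{Comparing the invariants.} A homotopy equivalence $S_1\simeq S_2$ gives the following.
\begin{itemize}
\item The intersection forms are isometric.
\item $e(S_1)=e(S_2)$ and $b_2(S_1)=b_2(S_2)$.
\item $w_2$ corresponds under the isometry (Wu's formula).
\item $K_{S_1}^2=K_{S_2}^2$, because $c_1^2=2e+3\sigma$ by Noether/Hirzebruch.
\item $\chi(\oo_{S_1})=\chi(\oo_{S_2})$, because $\chi(\oo)=(c_1^2+c_2)/12$.
\end{itemize}

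Now compute each invariant in Theorem~\ref{thm:main}(4a) for $X_i=\p(T_{S_i})$.
\begin{itemize}
\item $e(X_i)=2e(S_i)$.
\item $\chi(\oo_{X_i})=\chi(\oo_{S_i})$, by birational invariance and the fact that $R^jf_*\oo=0$ for $j>0$.
\item For $K_{X_i}^3$, use $K_X=-2\xi+f^*(K_S+\det E)$ for $X=\p(E)$, with $\xi^2=-f^*c_1(E)\xi-f^*c_2(E)$. This yields a universal polynomial in $c_1(S)^2$ and $c_2(S)$; for $E=T_S$ it is $K_X^3=-2(6K_S^2-2c_2)$ up to convention. The point is only that it is a function of $K_S^2$ and $e(S)$.
\end{itemize}
Hence $e$, $\chi(\oo)$ and $K^3$ coincide for $X_1$ and $X_2$, so condition (A) holds.

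\textbf{The $w_2$-type (main obstacle).} This is the step I expect to need real care. We have $w_2(X)=f^*(w_2(S)+w_2(E))+2\xi\equiv f^*(w_2(S)+c_1(E)) \bmod 2$. For $E=T_S$ this equals $f^*(2w_2(S))=0$, so $X_i$ is spin for every $S_i$. The $w_2$-type is therefore $0$ if $S_i$ is spin and $I$ otherwise.

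Spinness of $S_i$ is a homotopy invariant, since $w_2$ is determined by the intersection form via Wu's formula. So $X_1$ and $X_2$ have the same $w_2$-type.

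\textbf{Orientation.} Theorem~\ref{thm:main}(4a) gives oriented diffeomorphism with respect to the complex orientations. Since the corollary only asks for diffeomorphism, no further orientation check is needed.

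Combining the three steps, Theorem~\ref{thm:main}(4a) under condition (A) shows that $X_1$ and $X_2$ are diffeomorphic.
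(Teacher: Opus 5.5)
Your reduction is the one the paper intends. The paper's only justification is that a direct calculation shows the invariants of Theorem~\ref{thm:main}(4a) agree. Your computations $e(X)=2e(S)$, $\chi(\oo_X)=\chi(\oo_S)$ and $w_2(\p(T_S))=0$ are correct, and the last gives $w_2$-type $0$ or $I$ according to the parity of $S$.

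\textbf{The gap.} The step that fails as written is ``a homotopy equivalence gives isometric intersection forms, hence $K_{S_1}^2=K_{S_2}^2$ and $\chi(\oo_{S_1})=\chi(\oo_{S_2})$''. A homotopy equivalence $h\colon S_1\to S_2$ has degree $\pm1$ with respect to the complex orientations. If $\deg h=-1$ you only get an anti-isometry, so $\sigma(S_2)=-\sigma(S_1)$.

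Write $\sigma=\sigma(S_1)$ and $e=e(S_1)=e(S_2)$. When $\sigma\neq 0$ the argument cannot be repaired:
\begin{itemize}
\item \eqref{eq:A} fails, because $\chi(\oo_{S_1})=(e+\sigma)/4\neq(e-\sigma)/4=\chi(\oo_{S_2})$.
\item \eqref{eq:A'} fails, because $K_{X_1}^3+K_{X_2}^3=-8e(X_1)\neq-12e(X_1)$.
\item More decisively, the bundles are not diffeomorphic in either orientation. The formula for $p_1$ in \S\ref{sec:projbdl} gives $p_1(\p(T_S))=\bigl(2K_S^2-6e(S)\bigr)f^*\sigma_S=\bigl(6\sigma(S)-2e(S)\bigr)f^*\sigma_S$, with $f^*\sigma_S$ primitive. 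Since $|6\sigma-2e|\neq|6\sigma+2e|$, the divisibility of $p_1$ differs.
\end{itemize}
So the corollary has to be read with a homotopy equivalence preserving the complex orientations, i.e.\ with isometric intersection forms. The paper's one-line argument tacitly assumes the same. State this hypothesis explicitly; with it, your proof goes through.

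\textbf{Smaller corrections.}
\begin{itemize}
\item $\p(T_S)\to S$ is an MFS outright: it has relative Picard number one and $-K_X\cdot F=2$ on every fibre. So Theorem~\ref{thm:main}(4a) applies directly, and you need not argue that its proof extends.
\item Your value of $K_X^3$ is wrong. With $u$ as in \S\ref{sec:projbdl}, $c_1(\p(T_S))=2(c_1(S)-u)$, which gives $K_X^3=-8\bigl(K_S^2-e(S)\bigr)$; for example this is $-48$ for the flag threefold $\p(T_{\p^2})$. As you say, only its dependence on $K_S^2$ and $e(S)$ matters.
\item Drop the paragraph on rational surfaces. It is redundant, and there are infinitely many such surfaces. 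Also, diffeomorphic bases alone do not identify $\p(T_{S_1})$ with $\p(T_{S_2})$; you still need the comparison of $w_2$ and $p_1$ that the general argument performs anyway.
\end{itemize}
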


Theorem \ref{thm:main} (4) provides inequalities satisfied by the numerical invariants. This puts restriction on the Hodge diamond of MFSs under consideration. For example, the following Hodge diamond does not satisfy the inequality $e(X) + b_3(X) > 6\chi(\oo_X)$, hence cannot be realized by  a simply connected $3$-dimensional MFS with torsion free homology. 
\begin{align*}
\begin{array}{ccccccccccccc}
&&&&& 1 &&&&& \\
&&&& 0 && 0 &&&& \\
&&& 1 && 2 && 1 &&& \\
&& 0 && a && a  && 0 &&\\
&&& 1 && 2 && 1 &&& \\
&&&& 0 && 0 &&&& \\
&&&&& 1 &&&&&
\end{array}
\end{align*}

To conclude the introduction, we give a brief description of the idea used in the proof of Theorem \ref{thm:main}. First, a diffeomorphism classification of simply connected $6$-dimensional manifolds with torsion-free homology was established by Wall and Jupp  more than sixty years ago (see Theorem \ref{thm:6mfds}). In their results, the diffeomorphism type of such manifolds is determined by algebraic topological invariants such as the characteristic classes $w_2(M)$ and $p_1(M)$, and  the cubic form 
$$H^2(M, \mathbb{Z}) \times H^2(M, \mathbb{Z}) \times H^2(M, \mathbb{Z}) \to \mathbb Z, \ \ (x, y ,z) \mapsto \langle x \cup y \cup z , [M] \rangle.$$
Unlike (indefinite) quadratic forms, an explicit classification of cubic forms is still unknown. In this sense, a complete classification of simply connected $6$-manifolds with torsion-free homology groups remains open. However, the special geometric structure of Mori fiber spaces, as stated in Theorem \ref{thm:mori}, makes it possible  to express the associated cubic forms explicitly and to classify them by finitely many numerical invariants.   The basic idea of our approach is as follows. First, for a Mori fiber space $f \colon X \to Y$, we determine the topological invariants required for the classification, namely the triple $(H^*(X, \mathbb{Z}), p_1(X), w_2(X))$. This step requires a combination of techniques from algebraic geometry as well as algebraic and differential topology. In the second step we compare these topological invariants and obtain the diffeomorphism classification. This step is mainly elementary algebra in nature.

The proof of the main theorem proceeds by a case-by-case analysis. We outline the organization of the material in subsequent sections below. In \S \ref{sec:pre} we recall the definition and the structure theorem of MFSs, and prove several key facts about them that will be needed in later sections. We also review the classification theorem of $6$-manifolds by Wall and Jupp.

The proof of Theorem \ref{thm:main} is carried out in  \S \ref{sec:2} and \S \ref{sec:1}. In \S \ref{sec:2} we study the case where the base space $Y$ has dimension $2$. The MFSs are divided into two subclasses, according to whether the discriminant curve $C_f$ of $f$ is empty or nonempty. Formula (\ref{eq:leray1}) and Proposition \ref{prop:h2} imply that MFSs in different classes are not diffeomorphic. We then provide the classification for each class in Theorem \ref{thm:dim2smooth} and Theorem \ref{thm:dim2singular}, respectively.

In \S \ref{sec:1} we study the case where the base space $Y$ has dimension $1$. An invariant $K(X,Y)$ is introduced to distinguish the diffeomorphism types of the smooth fiber of $f$: $\p^2 \# r \overline{\p^2}$, $\p^1 \times \p^1$ or $\p^2$ (see formula (\ref{eqn:k})). For each case,  a corresponding classification theorem is provided (Theorem \ref{thm:k1}, \ref{thm:k2}, \ref{thm:k3}). Finally we study the situation in which a single diffeomorphism type can admit MFS structures with bases of different dimensions. This is addresses in Theorem \ref{thm:mixed}.

Some auxiliary results are established in \S \ref{sec:a}.

\noindent\textit{Acknowledgements.}  We would like to thank Chen Jiang for helpful discussions. The first author is supported by grants NSFC No.1240010723, SDNSFC (No. 2024HWYQ-009, No. ZR2024MA007, No. tsqn202312060). The second author is partially supported by NSFC 12471069. The third author would like to thank the Morningside Center for Mathematics for a research visit in the spring of 2025.

\section{Preliminaries}\label{sec:pre}
In the first part of this section we deduce key algebro-geometric and topological properties of 3-dimensional Mori fiber spaces from their algebraic properties. In the second part we recall the classification of $6$-manifolds by Wall and Jupp. 

We recall the definition of Mori fibre spaces (see e.g. \cite[Definition 3-2-1]{Mat01})

\begin{defn}\label{def:mfs}
A normal projective variety $X$ with only $\mathbb{Q}$-factorial and terminal singularities with a morphism $f\colon X\to Y$ is called a Mori fibre space (MFS for short) if 
\begin{enumerate}
\item $f$ is a morphism with connected fibres onto a normal projective variety $Y$;
\item $\dim Y<\dim X$; and
\item all the curves $C$ in fibres of $f$ are numerically propositional and $K_X\cdot C<0.$
\end{enumerate}
\end{defn}

Roughly speaking, an MFS $f\colon X\to Y$ has Fano fibres and relative Picard number one, i.e., $\rho(X)-\rho(Y)=1$ (see e.g., \cite[Theorem 3.2]{Mor82}. We call the morphism $f$ an \textit{extremal contraction} of $X$. For 3-dimensional MFSs, we have the following celebrated results due to Mori \cite[Theorem 3.5]{Mor82}.

	\begin{thm}\label{thm:mori}
		Let $ X $ be a $3$-dimensional smooth MFS, $f \colon X \to Y$ be an extremal contraction. Then $\dim Y \le 2$, and 
		\begin{enumerate}
			\item if $\dim Y=0$, then $X$ is a Fano manifold with Picard number $\rho(X)=1$;
			\item if $\dim Y=1$, then $Y$ is a smooth curve. Every fibre is irreducible and reduced, every smooth fibre $F$ is a del Pezzo surface and is diffeomorphic to one of the following
			$$\p^2, \ \p^1 \times \p^1, \ \p^2\#k\overline{\p^2} \ (3 \le k \le 8).$$
			Furthermore, if $F$ is diffeomorphic to $\p^2$, then $X$ is a $\p^2$-bundle over $Y$; if $F$ is diffeomorphic to $\p^1 \times \p^1$, then $X$ can be realized as a hypersurface in a $\p^3$-bundle over $Y$.
			\item if $\dim Y=2$, then $Y$ is a smooth surface and $f \colon X \to Y$ is a conic bundle. The smooth fibre $F$ is a conic curve, the singular fibre consists of two lines.
		\end{enumerate}
	\end{thm}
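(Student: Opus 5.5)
This theorem is cited from Mori \cite[Theorem 3.5]{Mor82}; we only indicate how its proof is organized. The approach is Mori's analysis of extremal rays on a smooth projective threefold via the cone theorem and bend-and-break. Let $R$ be the extremal ray contracted by $f$, and let $\ell$ be a rational curve spanning $R$ of minimal $-K_X$-degree, so that $1\le -K_X\cdot\ell\le 4$. The first step is to bound the dimension of a deformation family of $\ell$ from below by $-K_X\cdot\ell + 3 - 3$ at a point. Combining this with bend-and-break and the fact that $f$ is not birational (since $\dim Y<\dim X$), one sees that the curves of class in $R$ cover $X$. One then classifies by the length $\mu=-K_X\cdot\ell$.

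\begin{enumerate}
\item If $\mu=4$, then $\rho(X)=1$ and $X$ is Fano; this is the case $\dim Y=0$.
\item If $\mu=3$, then $f$ is a $\mathbb P^2$-bundle or a quadric fibration over a curve.
\item If $\mu=2$ and the fibres are one-dimensional, $f$ is a conic bundle over a surface.
\end{enumerate}

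In case (1), the Picard number statement follows from $\rho(X)-\rho(Y)=1$ together with $\rho(Y)=0$.

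For $\dim Y=1$, $Y$ is a normal curve and hence smooth. Every fibre $F$ has Picard-number-one image in $N_1(X)$. Irreducibility and reducedness of fibres follow from the relative Picard number being one: a reducible fibre would have components that are not numerically proportional. The general fibre $F$ satisfies $-K_F=-K_X|_F$ ample, so $F$ is a del Pezzo surface. The classification of del Pezzo surfaces gives $\mathbb P^2$, $\mathbb P^1\times\mathbb P^1$, or $\mathbb P^2\#k\overline{\mathbb P^2}$ with $k\le 8$. The cases $k=1,2$ are excluded because those surfaces contain a $(-1)$-curve whose class is invariant under monodromy. Such a curve would give a divisor on $X$ not proportional to the fibre class, contradicting the relative Picard number one. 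When $F\cong\mathbb P^2$, the divisor $-K_X$ restricts to $\mathcal O(3)$; Tsen's theorem gives a section-type divisor restricting to $\mathcal O(1)$, and pushing it forward exhibits $X=\mathbb P(E)$. When $F\cong\mathbb P^1\times\mathbb P^1$, the divisor $-\tfrac12K_X$ (up to a twist by a pullback) is relatively very ample and embeds $X$ into $\mathbb P(f_*\mathcal O(D))$, a $\mathbb P^3$-bundle.

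For $\dim Y=2$, all fibres are one-dimensional of anticanonical degree $2$. A flatness and smoothness-of-base argument (Mori's key lemma) shows that $Y$ is smooth. The direct image of $\mathcal O(-K_X)$ is a rank $3$ bundle embedding $X$ as a conic in its projectivization, with singular fibres being line pairs (or double lines).

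The main obstacle is the fine analysis of the $\dim Y=2$ case: proving that $Y$ is smooth and that $f$ is an honest embedded conic bundle. This requires Mori's deformation-theoretic study of the fibres.
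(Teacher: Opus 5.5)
Citing Mori \cite[Theorem 3.5]{Mor82} is exactly what the paper does; it gives no argument of its own. So as a citation your proposal agrees with the paper. The outline you attach, however, is wrong at its organizing step and would not yield the statement.

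The length $\mu=-K_X\cdot\ell$ does not determine $\dim Y$. Mori's deformation estimate only shows that the $R$-curves through a general point sweep out a locus of dimension $\ge \mu-1$ inside one fibre. This gives the one-sided bound $\dim Y\le 4-\mu$, nothing more. Your trichotomy therefore mis-sorts the cases:
\begin{itemize}
\item Quadric fibrations have $\mu=2$, not $3$: a ruling $\ell$ of $\p^1\times\p^1$ has $-K_X\cdot\ell=2$.
\item Del Pezzo fibrations with fibre $\p^2\#k\overline{\p^2}$, $3\le k\le 8$, have $\mu=1$ via $(-1)$-curves, and they are absent from your list.
\item A conic bundle with a fibre $\ell_1\cup\ell_2$ has $-K_X\cdot\ell_i=1$, hence $\mu=1$. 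Conversely, $\mu=2$ with one-dimensional fibres is exactly the $\p^1$-bundle case, with no singular fibres.
\item $\dim Y=0$ occurs for every $\mu\in\{1,2,3,4\}$: quartic, cubic and quadric threefolds, and $\p^3$.
\end{itemize}
So precisely the cases this paper later relies on fall outside your case analysis: conic bundles with nonempty discriminant in (3), and fibres $\p^2\#k\overline{\p^2}$ in (2). Mori's classification is organized the other way round. He first sorts by the type of the contraction (whether $R$-curves fill $X$, then $\dim Y$), and only then computes the length within each type (his types C1/C2, D1--D3, F).

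There is a separate problem with reducedness. Relative Picard number one does give irreducibility of fibres over a curve. A component $F_1$ of a reducible fibre is a Cartier divisor that has intersection zero with contracted curves in other fibres but is positive on a curve of another component meeting it, contradicting proportionality. It does not, however, exclude a multiple fibre $F=mF_0$, because $F_0$ is numerically trivial on every contracted curve. Reducedness needs another argument. For instance, $\oo_{F_0}(F_0)$ is numerically trivial, so filtering $\oo_{mF_0}$ by the sheaves $\oo_{F_0}(-iF_0)$ gives $1=\chi(\oo_F)=m\,\chi(\oo_{F_0})$, which forces $m=1$.
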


%\begin{lem} \label{lem:mori}  (Ref?)
%Let $f \colon X \to Y$ be a $3$-dimensional Mori fiber space. Then 
%\begin{enumerate}
%\item $f_* \colon \pi_1(X) \to \pi_1(Y)$ is surjective;
%\item $f_*\oo_X = \oo_Y$, $R^jf_* \oo_X=0$ for $j >0$;
%\item  if $\dim Y=2$, then the Picard numbers of $X$ and $Y$ satisfy $\rho(X)=\rho(Y)+1$;

%\end{enumerate}

%\end{lem}

\begin{lem}\label{lem:b2}
Let $f \colon X \to Y$ be a simply connected $3$-dimensional MFS with $\dim Y \ge 1$. Then $Y$ is simply connected and $b_2(X) = b_2(Y) +1$.
\end{lem}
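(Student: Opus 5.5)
Two things need proving: that $Y$ is simply connected, and that $b_2(X)=b_2(Y)+1$.

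\emph{Simple connectivity of $Y$.} By Theorem \ref{thm:mori}, $f$ is a surjective morphism with connected fibres between smooth projective varieties. There are two cases: a fibration over a smooth curve, or a conic bundle over a smooth surface. In either case, a standard fact gives that $f_*\colon \pi_1(X)\to\pi_1(Y)$ is surjective.

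One justification runs as follows. The general fibre $F$ is connected, and over the complement $Y^0$ of the (proper Zariski closed) discriminant locus, $f$ is a smooth fibre bundle. Hence $\pi_1(f^{-1}(Y^0))\to\pi_1(Y^0)$ is surjective, by the homotopy exact sequence together with connectedness of $F$. Moreover $\pi_1(Y^0)\to\pi_1(Y)$ is surjective, since we remove a complex codimension $\ge 1$ subvariety from a smooth variety. Combining these, $\pi_1(X)\to\pi_1(Y)$ is surjective.

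Alternatively one can argue via Nori/Koll\'ar: fibres are reduced and irreducible when $\dim Y=1$, and have no multiple components in the conic bundle case, so there is no orbifold contribution. Either way, $\pi_1(X)=1$ forces $\pi_1(Y)=1$. When $\dim Y=1$ this means $Y\cong\p^1$, and $b_2(Y)=1$.

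\emph{Betti numbers.} The Mori fibre space condition gives relative Picard number one, $\rho(X)=\rho(Y)+1$. To pass from Picard numbers to $b_2$, I would show $h^{2,0}(X)=h^{2,0}(Y)$ and $h^{1,0}=0$ on both sides; then $b_2=\rho+2h^{2,0}$ for both $X$ and $Y$. Since the fibres are rationally connected (Fano), we have $R^jf_*\oo_X=0$ for $j>0$ and $f_*\oo_X=\oo_Y$, so $H^i(X,\oo_X)\cong H^i(Y,\oo_Y)$ by the Leray spectral sequence. Hodge symmetry then gives $h^{0,2}(X)=h^{0,2}(Y)$, hence $h^{2,0}(X)=h^{2,0}(Y)$. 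Both $X$ and $Y$ are simply connected, so $b_1=0$ and $H^{1,0}=0$.

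Because $X$ and $Y$ are smooth projective, Lefschetz $(1,1)$ gives
$$b_2=h^{1,1}+2h^{2,0},\qquad h^{1,1}=\rho,$$
where $h^{1,1}=\rho$ holds since $H^2(\z)$ modulo torsion surjects onto the integral $(1,1)$-classes, i.e.\ NS equals $H^{1,1}\cap H^2(\z)$ with rank $h^{1,1}$ only when... In fact the identification $h^{1,1}=\rho$ needs justification, and I would avoid it. Instead I would use Leray directly with the conic/del Pezzo fibre structure, as follows.

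\emph{Topological computation, $\dim Y=2$.} Use the Leray spectral sequence with rational coefficients. We have $R^0f_*\q=\q$. Next, $R^1f_*\q=0$, since every fibre (a conic, a line pair, or a point fibre) has $H^1=0$. The sheaf $R^2f_*\q$ is $\q$ on the smooth locus, with stalk $\q^2$ over the discriminant curve. Computing $H^0(Y,R^2f_*\q)$ and $H^1(Y,R^2 f_*\q)$ requires care.

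A cleaner route is the decomposition theorem. Since $f$ is projective with relative dimension $1$ and the fibres are line pairs over $C_f$, we get $b_2(X)=b_2(Y)+1+(\text{contribution from }C_f)$. The extra contribution is $H^0$ of a rank one local system on $C_f$ given by the monodromy swapping the two lines. Relative Picard number one says exactly that this monodromy is nontrivial on every component of $C_f$, so the contribution vanishes.

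\emph{Topological computation, $\dim Y=1$.} Here $b_2(Y)=1$. The fibres are irreducible (Theorem \ref{thm:mori}), and the relative Picard number is one. The same decomposition argument gives $b_2(X)=1+\dim H^2(F)^{\pi_1(Y^0)}$. The invariant part corresponds to classes of $(1,1)$ type that are monodromy invariant, i.e.\ restrictions of global divisors, and by relative Picard number one it is one-dimensional (spanned by $K_F$). This gives $b_2(X)=2$.

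\emph{Main obstacle.} The hard part is linking the algebraic condition $\rho(X)-\rho(Y)=1$ to the topological invariant-cycle computation. I would handle this with the global invariant cycle theorem: invariant classes in $H^2(F)$ are restrictions from $H^2(X)$, and they are algebraic because $H^2(F)$ is algebraic for del Pezzo or conic fibres. This equates the topological relative contribution with the relative Picard number.
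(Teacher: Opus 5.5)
Your argument works, but it takes a different route from the paper.

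\emph{Simple connectivity.} You prove surjectivity of $f_*\colon\pi_1(X)\to\pi_1(Y)$ directly: Ehresmann over the regular locus $Y^0$, connected fibres, and surjectivity of $\pi_1(Y^0)\to\pi_1(Y)$. The paper just cites Koll\'ar. Your version is self-contained and correct.

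\emph{Betti numbers.} The paper stays Hodge-theoretic. From $R^jf_*\oo_X=0$ it gets $h^{i,0}(X)=h^{i,0}(Y)$. If $h^{2,0}=0$, the exponential sequence gives $b_2=\rho$ for both $X$ and $Y$, and $\rho(X)=\rho(Y)+1$ finishes. If $h^{2,0}\neq 0$, which forces $\dim Y=2$, it cites Koll\'ar--Mori 12.1.3. So the route you abandoned is exactly the paper's main case. The formula $b_2=\rho+2h^{2,0}$ is false in general, but when $h^{2,0}=0$ every class in $H^2$ is algebraic and $b_2=\rho$. In particular, for $\dim Y=1$ we have $h^{2,0}(X)=h^{2,0}(\p^1)=0$, so $b_2(X)=\rho(X)=2$ at once, and your invariant-cycle argument is unnecessary there. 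Your topological computation is what replaces the Koll\'ar--Mori citation when $p_g(Y)>0$. It treats all conic bundles uniformly and makes the meaning of $\rho(X/Y)=1$ visible (non-split discriminant double cover, one-dimensional monodromy invariants), at the cost of heavier machinery.

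Three assertions still need a line each.
\begin{enumerate}
\item For $\dim Y=2$ you do not need the decomposition theorem. Since $R^1f_*\q=0$ and $H^3(Y,\q)=0$, Leray gives
$b_2(X)=b_2(Y)+\dim H^0(Y,R^2f_*\q)$.
So $H^1(Y,R^2f_*\q)$ is irrelevant. The kernel of the trace map $R^2f_*\q\to\q_Y$ is the sign local system on the smooth part of $C_f$, with zero stalks at the double-line fibres over the singular points of $C_f$.
\item The claim that $\rho(X/Y)=1$ forces non-trivial monodromy over every component $C_1$ of $C_f$ should be proved or cited. If the cover split, then $f^{-1}(C_1)=D_1\cup D_2$ with $D_1$ Cartier. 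The lines $L_1,L_2$ of a fibre over a general point of $C_1$ are numerically proportional with positive constant, since both are $K_X$-negative. Together with $D_1\cdot(L_1+L_2)=D_1\cdot F=0$ for a general fibre $F$, this gives $D_1\cdot L_2=0$. But $L_2\not\subset D_1$ meets $D_1$ at $L_1\cap L_2$, so $D_1\cdot L_2>0$, a contradiction.
\item For $\dim Y=1$, algebraicity of classes on $F$ does not by itself make their lifts in $H^2(X,\q)$ algebraic. You need $h^{2,0}(X)=0$, or semisimplicity of polarizable Hodge structures together with Lefschetz $(1,1)$.
\end{enumerate}
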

\begin{proof}
The extremal contraction $f \colon X \to Y$ induces a surjective homomorphism $f_*\colon  \pi_1(X) \to \pi_1(Y)$ by \cite[Proposition 2.10.2]{Koll95}, therefore if $X$ is simply connected, so is $Y$.  

To prove the equality $b_2(X)=b_2(Y)+1$, consider the Leray-Serre spectral sequence of $f \colon X \to Y$. The $E_2$-terms are $E_2^{i,j}=H^i(Y, R^jf_* \oo_X)$ and the spectral sequence converges to $H^*(\oo_X)$. Since $f\colon X\to Y$ is a MFS, $-K_X$ is relatively ample. Hence by the relative Kodaira vanishing theorem (see e.g., \cite[Proposition 6.2]{MiPe97}), we have $R^jf_*\mathcal{O}_X=R^jf_*(-K_X+K_X)=0$ for $j>0$. Also, since $f$ has connected fibres, $f_*\mathcal{O}_X=\mathcal{O}_Y$. Therefore, the spectral sequence collapses at the $E_2$-page and one has an identity of Hodge numbers $h^{i,0}(X)=h^{i,0}(Y)$ ($i \ge 0$), which implies that $X$ and $Y$ have equal holomorphic Euler characteristics 
\begin{equation}\label{eq:holo}
\chi(\oo_X) =\chi(\oo_Y).
\end{equation}

If $h^{2,0}(X) =0$, then $h^{2,0}(Y)=0$. Also we have $H^1(\oo_X)=H^1(\oo_Y)=0$ since both $X$ and $Y$ are simply connected. From the exact sequence 
$$ \cdots \to H^1( \oo_X) \stackrel{\mathrm{exp}}{\longrightarrow} H^1( \oo_X^{*}) \to H^2(X,  \mathbb Z) \to H^2( \oo_X) \to \cdots$$
one deduces that the Picard number $\rho(X)$ equals to $b_2(X)$. Similarly $\rho(Y)=b_2(Y)$. It's known that the Picard numbers of $X$ and $Y$ satisfy $\rho(X)=\rho(Y)+1$ (see e.g., \cite[Theorem 3.2]{Mor82}). 

If $h^{2,0}(X) \ne 0$, then $h^{2,0}(Y) \ne 0$ and thus $\dim Y =2$, i.e., $f$ is of relative dimension 1. Hence $R^2f_*\mathcal{O}_X=0$.  It follows from \cite[12.1.3 Theorem)]{KoMo92} that $$b_2(X)-b_2(Y)=\dim_{\C}H_2(X/Y,\C)=1.$$ Here $H_2(X/Y,\C)\subset H_2(X,\C)$ is generated by the images of $H_2(X_y, \C)\to H_2(X,\C)$ where $X_y$ runs through all the fibres of $f$.
\end{proof}

\begin{lem}\label{lem:sign}
Let $f \colon X \to Y$ be a simply connected $3$-dimensional MFS, $\dim Y=2$. Then 
$$\mathrm{sign}(Y)=4 \chi(\oo_X)-e(Y).$$
\end{lem}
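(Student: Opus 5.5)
Since $\dim Y=2$, Theorem \ref{thm:mori} tells us that $Y$ is a smooth projective surface. By Lemma \ref{lem:b2} it is also simply connected. The plan is to use Noether's formula on $Y$, rewrite $c_1^2$ and $c_2$ topologically, and then use equation (\ref{eq:holo}) to pass from $\chi(\oo_Y)$ to $\chi(\oo_X)$.

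\textbf{Step 1 (Noether's formula).} On the smooth projective surface $Y$ we have
$$12\chi(\oo_Y)=K_Y^2+e(Y),$$
since $c_2(Y)=e(Y)$.

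\textbf{Step 2 (Hirzebruch signature theorem).} For a compact complex surface,
$$\mathrm{sign}(Y)=\tfrac13 p_1(Y)=\tfrac13\left(c_1(Y)^2-2c_2(Y)\right)=\tfrac13\left(K_Y^2-2e(Y)\right).$$
Hence $K_Y^2=3\,\mathrm{sign}(Y)+2e(Y)$.

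\textbf{Step 3 (combine).} Substituting Step 2 into Step 1 gives
$$12\chi(\oo_Y)=3\,\mathrm{sign}(Y)+3e(Y),$$
that is, $\mathrm{sign}(Y)=4\chi(\oo_Y)-e(Y)$.

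\textbf{Step 4 (pass to $X$).} By equation (\ref{eq:holo}) in the proof of Lemma \ref{lem:b2}, we have $\chi(\oo_X)=\chi(\oo_Y)$. This relies on the relative Kodaira vanishing $R^jf_*\oo_X=0$ for $j>0$ together with $f_*\oo_X=\oo_Y$. Substituting into Step 3 yields the claimed identity.

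There is no real obstacle here. The only point needing care is that the identity (\ref{eq:holo}) holds in the $\dim Y=2$ case, and it does, because that part of the argument in Lemma \ref{lem:b2} did not depend on $\dim Y$. The remaining work is the standard arithmetic above. Alternatively one can phrase Step 3 via $\chi(\oo_Y)=1-q+p_g$ with $q=0$, writing $\mathrm{sign}(Y)=b_2^+-b_2^-$ and $b_2^+=2p_g+1$, which gives the same formula.
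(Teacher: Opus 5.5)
Your proposal is correct and is essentially the paper's proof: you combine the Hirzebruch signature theorem $\mathrm{sign}(Y)=\frac13\bigl(c_1(Y)^2-2e(Y)\bigr)$ with Noether's formula $12\chi(\oo_Y)=c_1(Y)^2+e(Y)$, then use $\chi(\oo_X)=\chi(\oo_Y)$ from equation (\ref{eq:holo}). Simple connectivity of $Y$ is not needed for this main route; it is only used in your alternative via $q=0$ and $b_2^+=2p_g+1$.
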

\begin{proof}
By the Hirzebruch Signature Theorem, the signature of $Y$ satisfies
$$\mathrm{sign}(Y)=\frac{1}{3}\langle p_1(Y), [Y] \rangle = \frac{1}{3}(\langle c_1(Y)^2, [Y] \rangle -2e(Y)).$$
By the Noether's formula (see e.g. \cite[I.14]{Beau96}), the holomorphic Euler characteristic of $Y$ satisfies 
$$\chi( \oo_Y)=\frac{1}{12}\langle c_1(Y)^2+c_2(Y), [Y] \rangle = \frac{1}{12}(\langle c_1(Y)^2,[Y]\rangle + e(Y)).$$
Together with the fact $\chi( \oo_X) = \chi( \oo_Y)$ (equation (\ref{eq:holo})), we have 
$$
\mathrm{sign}(Y)=4 \chi( \oo_X)-e(Y).
$$
\end{proof}

Simply connected $6$-manifolds with torsion free homology groups are classified by Wall (\cite{Wall66}) and Jupp (\cite{Jupp73}) in terms of a system of algebraic topological invariants. We rephrase the results in the following theorem which will be used in this paper. 

\begin{thm}\label{thm:6mfds}
Let $X$ and $X'$ be simply connected oriented smooth $6$-dimensional manifolds with torsion free homology. Then $X$ is oriented diffeomorphic to $X'$ if and only if $b_3(X) = b_3(X')$, and there is an isomorphism $\varphi \colon H^2(X, \Z) \to H^2(X', \Z)$, which preserves the cubic forms, i.e., for any $x$, $y$, $z \in H^2(X)$,
$$\langle x \cup y \cup z, [X] \rangle = \langle \varphi(x) \cup \varphi (y) \cup \varphi(z), [X']\rangle,$$ 
and preserves the characteristic classes, i.e., 
$$\varphi(w_2(X))=\varphi(w_2(X')), \ \ \varphi^*(Dp_1(X'))=Dp_1(X),$$ 
where $w_2(X)$ and $p_1(X)$ are the second Stiefel-Whitney class and the first Pontrjagin class of $X$, respectively, and $D$ stands for the Poincar\' e duality isomorphism.
\end{thm}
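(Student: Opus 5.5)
The plan is to derive this from the published theorems of Wall \cite{Wall66} and Jupp \cite{Jupp73}, not to reprove them. Both directions reduce to checking that our invariants are exactly the complete system of invariants used there, in the torsion-free case. In the condition on $w_2$, the intended meaning is that $\varphi$, reduced mod $2$, sends $w_2(X)$ to $w_2(X')$. Likewise, $\varphi^*$ is the induced map on $H^4$, and the condition reads $\varphi^*(D p_1(X'))=Dp_1(X)$ with $D$ identifying $H_2$ and $H^4$.

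\emph{Necessity.} Suppose $h\colon X\to X'$ is an orientation preserving diffeomorphism. Set $\varphi=(h^{-1})^*=(h^*)^{-1}$ on $H^2$. Then:
\begin{itemize}
\item $\varphi$ preserves the cup product, and $h_*[X]=[X']$, so the cubic forms agree.
\item Stiefel--Whitney and Pontrjagin classes are natural under diffeomorphisms, since $h^*TX'\cong TX$. Hence $\varphi$ carries $w_2$ and $p_1$ to the corresponding classes of $X'$.
\item Poincar\'e duality commutes with $h$ because $h$ preserves fundamental classes, which gives the stated compatibility with $D$.
\item $b_3$ is a homotopy invariant.
\end{itemize}

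\emph{Sufficiency.}
\begin{enumerate}
\item Apply Wall's splitting theorem \cite{Wall66}. A simply connected closed $6$-manifold with torsion-free homology decomposes as $X\cong X_0\# r(S^3\times S^3)$, where $2r=b_3(X)$ and $H_3(X_0)=0$. Moreover, $X_0$ is determined up to diffeomorphism by $X$ together with the invariants $(H^2,\mu,w_2,p_1)$, and these invariants are unchanged by the connected sum, since $S^3\times S^3$ has trivial $H^2$ and stably trivial tangent bundle. Since $b_3(X)=b_3(X')$, it therefore suffices to treat the case $b_3=0$.
\item Treat the spin case, $w_2=0$. Wall's theorem says that $X_0$ is determined by the triple $(H^2(X_0),\mu,p_1)$ subject to the relation $p_1(x)\equiv 4x^3 \pmod{24}$, and that any isomorphism of such triples is realized by a diffeomorphism. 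Our hypothesis supplies exactly such an isomorphism $\varphi$.
\item Treat the non-spin case. Jupp's extension says that $X_0$ is determined by $(H^2,\mu,w_2,p_1)$ together with the corresponding congruence involving $w_2$. In the torsion-free setting there is no extra invariant beyond these. Again $\varphi$ supplies an isomorphism of the invariant systems, so Jupp's realization statement yields an orientation preserving diffeomorphism.
\end{enumerate}

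\emph{Main obstacle.} The only delicate point is bookkeeping between conventions. Wall phrases $p_1$ as a linear form on $H^2$, namely $x\mapsto\langle p_1\cup x,[X]\rangle$. This linear form is exactly $Dp_1$ viewed through the Kronecker pairing, and one must check that the compatibility condition in our statement matches his equivalence of systems of invariants. I would also confirm that Jupp's theorem covers all $w_2$, with no additional sign or orientation invariant arising when homology is torsion free. This holds since his extra invariants live in torsion.
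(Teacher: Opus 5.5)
Your proposal is correct and takes the same route as the paper: the paper gives no independent argument here and simply restates the classification theorems of Wall \cite{Wall66} (spin case, with the splitting off of $S^3\times S^3$ summands) and Jupp \cite{Jupp73} (non-spin case). Your reading of the typo as $\varphi(w_2(X))=w_2(X')$ mod $2$, and of the $p_1$ condition via the dual map $H_2(X')\to H_2(X)$, is how the paper uses the theorem later; note also that for the existence statement you only need the bijection on isomorphism classes, not your stronger claim that every isomorphism of invariants is realized by a diffeomorphism.
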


\section{The case $\dim Y=2$}\label{sec:2}
\subsection{The topology of projective bundles}\label{sec:projbdl}
Let $f \colon X \to Y$ be a simply connected $3$-dimensional MFS with torsion free homology and $\dim Y=2$.  If the discriminant curve of $f$ is empty, then $f$ is a holomorphic submersion. Thus $f \colon X \to Y$ is a fiber bundle (\cite{FiGr65}) with fiber $\p^1$ (by Theorem \ref{thm:mori}) and structure group the projective linear group $PGL_2(\mathbb C)$. The homotopy fibre of the natural map between classifying spaces $BGL_2(\mathbb C) \to BPGL_2(\mathbb C)$ is $B\mathbb C^* \simeq \mathbb C \mathrm P^{\infty}$ (since there is an exact sequence of Lie groups $0 \to \mathbb C^* \to GL_2(\mathbb C) \to PGL_2(\mathbb C) \to 0$). The obstructions to lifting the structure group of $f \colon X \to Y$ to $BGL_2(\mathbb C)$ are in the cohomology groups $H^i(Y; \pi_{i-1}(B\mathbb C^*))$, which all vanish.  Therefore, as a smooth fibre bundle,  $X$ is the projective bundle $\p(E)$ of a rank $2$ complex vector bundle $E$ over $Y$.  By Leray-Hirsch theorem, $H^*(X, \Z)$ is a free $H^*(Y, \Z)$-module with generators $1$ and $u$,  
\begin{equation}\label{eq:leray1}
H^*(X, \Z) = H^*(Y,\Z)\{1, u\}, 
\end{equation}
where $u=c_1(L)$ is the first Chern class of the tautological line bundle of $\p(E)$, and there is a relation 
\begin{equation}\label{eq:leray2}
u^2 - c_1u + c_2=0, 
\end{equation}
where $c_i=c_i(E)$ ($i=1,2$) is the $i$-th Chern class of $E$.

The tangent bundle $TX$ is isomorphic to $V \oplus f^*TY$, where $V = \ker (df \colon TX \to TY)$ is the vertical bundle, which under the identification of $X$ with $\p(E)$,  is isomorphic to the hom-bundle $Hom(L, L^{\perp})$. Therefore there is an isomorphism of vector bundles
$$TX \cong Hom(L, L^{\perp}) \oplus f^*TY.$$
There are isomorphisms of ${C}^{\infty}$ complex vector bundles
$$Hom(L, L^{\perp}) \oplus \underline{\mathbb C} \cong Hom(L, L^{\perp} \oplus L) \cong Hom(L, f^*E) \cong L^* \otimes f^*E,$$
where $\underline{\mathbb C}$ denotes the trivial ${C}^{\infty}$ line bundle, and $L^*$ is the dual bundle of $L$.
From this description the characteristic classes of $X$ are expressed in terms of that of $Y$ and $E$ as follows (cf.~\cite{BH60})
\begin{eqnarray}\label{eq:class}
c_1(X) & = & c_1(Y)+c_1-2u, \\
c_2(X) & = & c_2(Y)+(c_1-2u)c_1(Y), \\
c_3(X)& = & (c_1-2u)c_2(Y),\\
p_1(X) & = & \langle (c_1^2-4c_2)+p_1(Y), [Y] \rangle \sigma_Y
\end{eqnarray}
where $\sigma_Y$ is the cohomology fundamental class. From these 
we have 
\begin{equation}\label{eq:cp}
\langle c_1(X)p_1(X),[X] \rangle = -2\langle (c_1^2-4c_2)+p_1(Y), [Y] \rangle.
\end{equation}
On the other hand, by the Hirzebruch-Riemann-Roch formula, we have 
\begin{equation}\label{eq:cp1}
\langle c_1(X)p_1(X),[X] \rangle = \langle c_1(X)^3-2c_1(X)c_2(X), [X] \rangle = \langle c_1(X)^3, [X] \rangle -48 \chi( \oo_X).
\end{equation}
Therefore the characteristic numbers are subjected to the relation
\begin{equation}\label{eq:cchi}
\langle c_1(X)^3, [X] \rangle -48 \chi( \oo_X)=-2\langle (c_1^2-4c_2)+p_1(Y), [Y] \rangle.
\end{equation}
Moreover, from equation (\ref{eq:class}) we have 
\begin{equation}\label{eq:w2}
w_2(X)=w_2(Y)+w_2(E).
\end{equation} 

\subsection{The topology of singular Mori fiber spaces}
\begin{proposition}\label{prop:h2}
Let $f \colon X \to Y$ be a simply connected $3$-dimensional MFS with torsion free homology. Assume that $\dim Y=2$ and the discriminant curve of $f$ is nonempty. Then $c_1(X)$ is a primitive class. Let $u=c_1(X)-c_1(Y)$, then
$$H^2(X, \Z)= H^2(Y,\Z) \oplus \mathbb Z\langle u \rangle$$
$$2 \cdot H^4(X,\Z) =  H^4(Y,\Z) \oplus (\mathbb Z\langle u \rangle \cup H^2(Y,\Z)).$$
In particular, there exist $c_1 \in H^2(Y,\Z)$, $c_2 \in H^4(Y,\Z)$ such that $2u^2=c_1 u + c_2$.
\end{proposition}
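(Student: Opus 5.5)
We will work through the Leray–Serre spectral sequence of $f$ with integer coefficients, combined with Poincaré duality and the intersection numbers of $c_1(X)$ with fibre components.

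\textbf{Step 1: primitivity of $c_1(X)$ and the class $u$.} Take a point $y\in C_f$. The fibre over $y$ is a pair of lines $\ell_1\cup\ell_2$, and a smooth fibre $F$ is homologous to $\ell_1+\ell_2$. By adjunction, $-K_X\cdot F=2$. Each line is a $(-1)$-curve in the surface $f^{-1}(\text{transversal slice})$ with normal bundle of degree $-1$, so $-K_X\cdot\ell_i=1$. Hence $c_1(X)$ evaluates to $1$ on $[\ell_1]\in H_2(X,\z)$, and $c_1(X)$ is primitive. Set $u=c_1(X)-f^*c_1(Y)$; then $\langle u,\ell_1\rangle=1$ and $\langle u,F\rangle=2$, because $f^*$ of any class kills fibre curves.

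\textbf{Step 2: $H^2$.} By Lemma \ref{lem:b2}, $Y$ is simply connected and $b_2(X)=b_2(Y)+1$. The map $f^*\colon H^2(Y,\z)\to H^2(X,\z)$ is injective: if $f^*a=0$, then by the projection formula $f_*(f^*a\cup c_1(X)^2\cdots)$ vanishes. More concretely, $f^*a\cup u$ paired with $f^*b\cup u$ recovers $2a\cdot b$-type data up to the fibre degree. So we get injectivity with torsion-free cokernel, via unimodularity of $Y$ as follows. The image of $f^*$ lies in the annihilator of $[\ell_1]$, and $f_*\colon H_2(X)\to H_2(Y)$ is surjective because a section exists over the 2-skeleton ($Y$ is simply connected, and the fibre is connected and simply connected away from the singular locus; we use the general fact that $\pi_1$-surjectivity together with the Leray sequence gives $H_2(X)\to H_2(Y)$ onto). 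Dually, $f^*$ is a split injection onto the subgroup of classes vanishing on the fibre classes $\ell_1,\ell_2$ — note that these all lie in $\ker f_*$, which has rank $1$ and is generated by $[\ell_1]$, since the $\ell_i$ over different points of $C_f$ are deformation-equivalent up to monodromy exchanging them, and $[\ell_2]=[F]-[\ell_1]$. Since $\langle u,\ell_1\rangle=1$, the map $x\mapsto x-\langle x,\ell_1\rangle u$ splits $H^2(X)$ as $f^*H^2(Y)\oplus\z u$.

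\textbf{Step 3: $H^4$ and the relation.} By Poincaré duality, $H^4(X)\cong H_2(X)=\mathrm{Hom}(H^2(X),\z)$. We compute the pairing of $f^*a$ with $u$ and with $f^*b$, where $f^*a\cup f^*b=(a\cdot b)\,[F]^{*}$ and $\langle u,F\rangle=2$, so
$$\langle f^*a\cup u\cup f^*b,[X]\rangle=2\,a\cdot b,\qquad \langle f^*a\cup f^*b\cup u,[X]\rangle=2\,a\cdot b .$$
Also $f^*\sigma_Y$ is dual to $[F]$, which evaluates to $0$ on $f^*H^2(Y)$ and to $2$ on $u$. Consequently the sublattice $H^4(Y)\oplus u\cup H^2(Y)$ corresponds, under duality, to the functionals with matrix $\begin{pmatrix}2\cdot Q_Y & *\\ 0&2\end{pmatrix}$ relative to the basis of Step 2. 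By unimodularity of $Q_Y$, this is exactly $2\cdot\mathrm{Hom}(H^2(X),\z)$, which gives the equality $2H^4(X)=H^4(Y)\oplus(\z u\cup H^2(Y))$. Then $2u^2$ lies in this sublattice, which yields the classes $c_1,c_2$.

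\textbf{Main obstacle.} The delicate point is the integral (not rational) surjectivity of $f_*$ on $H_2$, equivalently that $f^*H^2(Y)$ is a direct summand. I would prove it via the Leray spectral sequence with $\z$ coefficients, using $R^1f_*\z=0$ and $R^2f_*\z$, which is $\z$ off $C_f$. The proof also has to check that the $d_2\colon E_2^{0,2}\to E_2^{2,1}=0$ and $d_3\colon E^{0,2}\to E^{3,0}=0$ differentials vanish, using $H^3(Y)=0$.
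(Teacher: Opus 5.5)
Your Steps 1 and 2 follow the paper's argument in substance. Step 1 is even slightly cleaner: $-K_X\cdot\ell_i\ge 1$ by relative ampleness and the two values sum to $2$, so $c_1(X)$ evaluates to $1$ on $[\ell_1]$ and is primitive. Drop the ``section over the $2$-skeleton'' remark, which is not justified: a $2$-complex cannot in general be pushed off the real surface $C_f$ in the $4$-manifold $Y$. Your Leray fallback is what the paper does. It also needs the fact that \emph{every} stalk of $R^2f_*\mathbb Z$ is torsion free (it is $\mathbb Z^2$ over $C_f$). Then $H^0(Y,R^2f_*\mathbb Z)$, and hence $\mathrm{coker}\, f^*$, is torsion free.

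The genuine gap is in Step 3. The determinant $\pm 2^{r+1}$ of your block matrix only shows that the lattice $\Lambda$ spanned by $f^*\sigma_Y$ and $u\cup f^*H^2(Y)$ has the same index as $2H^4(X)$. Equality also needs $\Lambda\subset 2H^4(X)$, i.e.\ that the entries $*=\langle u^2\cup f^*a,[X]\rangle$ are even. You never check this, and it is false in general. By Proposition~\ref{prop:top2}(1), $*=-\langle a,[C_f]\rangle$, so $\Lambda=2H^4(X)$ if and only if $[C_f]\in 2H_2(Y,\mathbb Z)$.

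Here is a concrete counterexample. Let $X=\mathrm{Bl}_\ell V_3$ be a smooth cubic threefold blown up along a line, with the conic bundle $f\colon X\to\p^2$ given by $|H-E|$. This is Fano threefold 2--11: simply connected, torsion-free homology, $b_3=10$, quintic discriminant. Use $H^3=3$, $H^2E=0$, $HE^2=-1$, $E^3=0$, and set $y=H-E$ and $u=-K_X-3y=-H+2E$. One gets
$$uy^2=2,\qquad u^2y=-5,\qquad u^3=9.$$
So $u\cup y\notin 2H^4(X)$. Moreover, $2u^2=c_1uy+c_2y^2$ forces $c_1=-5$ and $c_2=-\tfrac72$. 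This matches Proposition~\ref{prop:top2}(4), which gives $18-12-72+\tfrac{125}{2}=-\tfrac72$.

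Thus no argument can close this step as stated. The paper's proof has the same omission: it uses $u\cdot\sigma_Y=2$ and $z_i\cdot u\cdot y_j=2\delta_{ij}$ but never examines $u^2y_j$.

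What your computation does prove is the following. First, $[H^4(X):\Lambda]=2^{b_2(X)}$. Second, $2u^2=c_1u+c_2$ with $c_1\in H^2(Y,\mathbb Z)$ satisfying $D_Y(c_1)=-[C_f]$, and $c_2=(\langle u^3,[X]\rangle-\tfrac12[C_f]\cdot[C_f])\,\sigma_Y\in\tfrac12H^4(Y,\mathbb Z)$. The stated equality for $2H^4(X)$ holds exactly when $[C_f]$ is divisible by $2$. The class $c_2$ is integral exactly when $[C_f]\cdot[C_f]$ is even.
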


\begin{proof}
Consider the Leray-Serre spectral sequence for the morphism $f \colon X \to Y$. The $E_2$-terms are $E_2^{p,q}=H^p(Y,R^qf_*(\mathbb Z))$ and the spectral sequence converges to $H^*(X,\mathbb Z)$ (see e.g. \cite[Corollary 2.3.4]{Dim04}). The stalk $(R^qf_*(\z))_y=H^q(f^{-1}(y), \Z)$ for any $y \in Y$, by the proper base change theorem (see e.g. \cite[Theorem 2.3.26]{Dim04}). In our situation $f^{-1}(y)$ is homeomorphic to $\p^1$ or $\p^1 \vee \p^1$  (Theorem \ref{thm:mori} (3)). Therefore $f_*(\z)=\z$, $R^1f_*(\z)=0$ are constant sheaves, and $R^2f_*(\z)$ is a sheaf of free abelian groups. The differential $d_3 \colon E_2^{0,2}=H^2(Y,R^2f_*(\z)) \to E_2^{3,0}=H^3(Y,f_*(\z))$ is trivial, since the target is the trivial group. We get a short exact sequence
$$0 \to H^2(Y,\Z) \stackrel{f^*}{\longrightarrow} H^2(X,\Z) \to H^0(Y, R^2f_*(\z)) \to 0.$$
In the sequel we will not distinguish a cohomology class in $H^*(X,\Z)$ and $H^*(Y,\Z)$.

The group $H^0(Y, R^2f_*(\z))$ is the group of global  sections of the sheaf, hence a free abelian group. Since $b_2(X) = b_2(Y) +1$, we know that $H^0(Y, R^2f_*(\z))$ is isomorphic to $\z$ and we have a short exact sequence 
\begin{equation}\label{eq:h2}
0 \to H^2(Y,\Z) \to H^2(X,\Z) \to \z \to 0.
\end{equation}
Let $F$ be a smooth fiber, $i \colon F \to X$ be the inclusion map. Then 
$$\langle i^*c_1(X), [F] \rangle = \langle c_1(F), [F] \rangle = \langle c_1(\p^1), [\p^1] \rangle=2.$$
Let $F'=f^{-1}(y)$ be a singular fiber over a smooth point $y$ in the discriminant curve, then $F'$ is a union of two lines $L_1$ and $L_2$.    Note that $F$ and $F'$ represent the same homology class in $H_2(X,\Z)$, since they are algebraically equivalent (see e.g. \cite[Definition 3.2.3, Section 3.3.4]{And04}). Let $c_1(X) = k \alpha$, where $\alpha \in H^2(X,\Z)$ is a primitive element and $k > 0$.  Then 
\begin{equation}\label{eq:c1}
\langle c_1(X), [F'] \rangle = k(\langle \alpha, [L_1] \rangle + \langle \alpha , [L_2] \rangle) =2.
\end{equation}
Note that both $L_1$ and $L_2$ represent non-trivial homology classes in $H_2(X,\Z)$ (since they have nonempty intersections with some hyperplanes, see e.g.  \cite[Theorem 1.2.19]{Laz04}), and the evaluation of $[L_i]$ with any classes in $\mathrm{Im}(f^* \colon H^2(Y,\Z) \to H^2(X,\Z))$ vanishes. From the exact sequence (\ref{eq:h2}) one sees that  $\langle \alpha, [L_i] \rangle$ must be positive integers. Then equation (\ref{eq:c1}) implies $k=1$,  $\langle \alpha , [L_i]\rangle =1$ and $c_1(X)$ is a primitive class. Furthermore, $[L_1] = [L_2]$. Let $u=c_1(X)-c_1(Y)$,  then $H^2(X,\Z) = H^2(Y,\Z) \oplus \z\langle u \rangle$.

Let $\{ y_1, \cdots, y_r\}$ be a basis of $H^2(Y,\Z)$, then $\{u, y_1, \cdots, y_r\}$ is a basis of $H^2(X,\Z)$. Let $\sigma_Y \in H^4(Y,\Z)$ be the fundamental class of $Y$.  By standard argument in algebraic topology (see Lemma \ref{lem:poincare}), $f^*(\sigma_Y)$ equals to the Poincar\' e dual of $[F]$. We identify $H^6(X,\Z)$ with $\z$ by the orientation. Then  
\begin{equation}\label{eq:u2}
u \cdot \sigma_Y=2
\end{equation}
There is a basis $\{z_1, \cdots, z_r\}$ of  $H^2(Y,\Z)$ such that $z_i \cdot y_j =\delta_{ij} \sigma_Y$.  Then $z_i \cdot u \cdot y_j=2\delta_{ij}$. The pairing 
$$-\cup - \colon H^2(X,\Z) \times H^4(X,\Z) \to \z$$
is unimodular, the equations $u \cdot \sigma_Y=2$ and $z_i \cdot u \cdot y_j=2\delta_{ij}$ imply that $\sigma_Y, uy_1, \cdots, uy_r $ generate the subgroup $2 \cdot H^4(X,\Z)$.
\end{proof}

From this proposition we see that the cup product structure on $H^2(X,\Z)$ is determined by the classes $c_1$ and $c_2$. We regard the singular curve $C_f \subset Y$, and the numerical invariants of $X$, such as $K_X^3$, and $\chi( \oo_X)$ as algebro-geometric data of the Mori fiber space $f \colon X \to Y$, and express the diffeomorphism invariants of $X$, such as the cubic form on $H^2(X,\Z)$, and the Pontrjagin class $p_1(X)$,  in terms of this data. This is done in the following proposition.

\begin{proposition}\label{prop:top2}
Let $f \colon X \to Y$ be a simply connected $3$-dimensional MFS with torsion free homology, $\dim Y=2$ and non-empty discriminant curve $C_f$. Let $u=c_1(X)-c_1(Y)$, $c_1 \in H^2(Y,\Z)$, $c_2 \in H^4(Y,\Z)$ such that $2u^2=c_1 u + c_2$ as in Proposition \ref{prop:h2}. Let $D \colon H^4(X,\Z) \to H_2(X,\Z)$ and $D_Y \colon H^2(Y,\Z) \to H_2(Y,\Z)$ be the Poincar\' e duality maps of $X$ and $Y$, respectively. Then
\begin{enumerate}
\item $f_*D(u^2)=-[C_f]$, $f_*D(p_1(X))=-3[C_f]$;
\item $b_3(X)=2(p_a(C_f)-1)= [C_f] \cdot [C_f] - \langle c_1(Y), [C_f]\rangle$, 
where $p_a(C_f)$ is the arithmetic genus of $C_f$;
\item $D_Y(c_1)=-[C_f]$;
\item $\langle c_2, [Y] \rangle = \langle c_1(X)^3, [X] \rangle + 3 e(X) -72 \chi(\oo_X) + \frac{5}{2} [C_f] \cdot [C_f]$;
\item From (1) we may write $p_1(X) = p + 3u^2$, where 
$p \in H^4(Y,\Z)$. Then 
$$\langle p, [Y] \rangle = 84\chi(\oo_X) - \langle c_1(X)^3, [X] \rangle -\frac{9}{2}e(X)-\frac{3}{2}b_3(X) -3[C_f] \cdot [C_f].$$
\end{enumerate}
\end{proposition}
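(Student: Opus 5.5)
The plan is to work with the Leray--Hirsch-type description from Proposition \ref{prop:h2}: every class is written in the basis $\{u, y_1,\dots,y_r\}$, and all products are reduced using $2u^2=c_1u+c_2$ and $u\cdot\sigma_Y=2$. Note that $u=c_1(X)-c_1(Y)=-K_{X/Y}$, so $u^2=K_{X/Y}^2$.

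\textbf{Part (1), first identity.} For a standard conic bundle one has the relative formula $f_*(K_{X/Y}^2)\equiv -[C_f]$. Equivalently, $-f_*(K_X^2)=4K_Y+C_f$; this follows from writing $X\subset \p(\mathcal E)$ for a rank $3$ bundle $\mathcal E$ over $Y$, with $X$ the zero locus of a quadratic form whose determinant cuts out $C_f$. I would cite this (Beauville, Prym varieties and conic bundles) or rederive it from that embedding. Pushing forward then gives $f_*D(u^2)=-[C_f]$.

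\textbf{Part (3).} Apply $f_*D$ to $2u^2=c_1u+c_2$. Since $f_*D(c_1u)=\langle u,[F]\rangle D_Y(c_1)=2D_Y(c_1)$ and $f_*D(c_2)=0$ for degree reasons, we get $2D_Y(c_1)=-2[C_f]$, hence $D_Y(c_1)=-[C_f]$.

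\textbf{Part (1), second identity.} I would write $p_1(X)=c_1(X)^2-2c_2(X)$ and localize. Over $Y\setminus C_f$, $X$ is a $\p^1$-bundle $\p(E)$, and there $p_1(X)=f^*(c_1^2-4c_2+p_1(Y))$ pulls back from the base. So $f_*Dp_1(X)$ is a multiple of $[C_f]$ plus a correction supported over $C_f$, and it suffices to find the coefficient. I would compute it either by Grothendieck--Riemann--Roch applied to $T_X$ along $f$, or by testing against $f^*$ of a general curve $\Gamma\subset Y$ meeting $C_f$ transversally. The surface $S=f^{-1}(\Gamma)$ is a conic bundle over $\Gamma$ with $\Gamma\cdot C_f$ reducible fibres, i.e.\ a ruled surface blown up $\Gamma\cdot C_f$ times. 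The normal bundle of $S$ is $f^*\mathcal O(\Gamma)$, so $\langle p_1(X),[S]\rangle$ equals $p_1(S)$ plus a pullback term. Each blow-up shifts $c_1^2$ by $-1$ and the Euler number by $+1$, so it contributes $-3$ to $p_1(S)=c_1^2-2c_2$. This should yield the coefficient $-3$, and I expect this local bookkeeping to be the main obstacle.

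\textbf{Part (2).} Compute Euler characteristics fibrewise. Smooth fibres have $e=2$. Fibres over smooth points of $C_f$ are two lines meeting in a point, with $e=3$. At nodes of $C_f$ (which for a standard conic bundle are exactly the double-line fibres) the fibre has $e=2$. Summing gives
\[
e(X)=2e(Y)+e(C_f^{\mathrm{sm}}) = 2e(Y)+2-2p_a(C_f).
\]
On the other hand $e(X)=2+2b_2(X)-b_3(X)$ with $b_2(X)=b_2(Y)+1$ by Lemma \ref{lem:b2}, and $e(Y)=2+b_2(Y)$. Comparing the two expressions gives $b_3(X)=2(p_a(C_f)-1)$, and adjunction $2p_a-2=C_f^2+K_Y\cdot C_f$ gives the second form.

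\textbf{Parts (4) and (5).} These are pure algebra. Expand $c_1(X)^3=(u+c_1(Y))^3$, using
\[
u^3=\tfrac12(c_1u^2+c_2u), \qquad c_1u^2=\tfrac12 c_1^2u, \qquad u\cdot\sigma_Y=2, \qquad u\cdot y\cdot y'=2\,y\cdot y'.
\]
This expresses $\langle c_1(X)^3,[X]\rangle$ in terms of $\langle c_2,[Y]\rangle$, $c_1^2=[C_f]^2$, $c_1\cdot c_1(Y)=-\langle c_1(Y),[C_f]\rangle$ and $c_1(Y)^2$. Then eliminate $c_1(Y)^2$ and $e(Y)$ using Noether's formula with $\chi(\oo_Y)=\chi(\oo_X)$ (equation (\ref{eq:holo})), the Euler relation from (2), and $b_3$ from (2). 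Solving for $\langle c_2,[Y]\rangle$ gives (4).

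For (5), compute $\langle c_1(X)p_1(X),[X]\rangle$ in two ways. First, from $p_1(X)=p+3u^2$ it equals $2\langle p,[Y]\rangle+3\langle (u+c_1(Y))u^2,[X]\rangle$, and the second term is evaluated by (1) and (3). Second, by Hirzebruch--Riemann--Roch as in (\ref{eq:cp1}), it equals $\langle c_1(X)^3,[X]\rangle-48\chi(\oo_X)$. Equating the two and substituting (4) and (2) yields the stated formula for $\langle p,[Y]\rangle$.
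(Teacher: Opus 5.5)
Your proposal is correct, and for parts (2)--(5) it is the paper's proof. For (3) you apply $f_*D$ to $2u^2=c_1u+c_2$. For (4) you expand $(u+c_1(Y))^3$ and use Noether's formula with $\chi(\oo_Y)=\chi(\oo_X)$. For (5) you equate the two evaluations of $\langle c_1(X)p_1(X),[X]\rangle$. For (2) the paper quotes $e(X)=2(e(Y)-p_a(C_f)+1)$ from \cite[Lemma 7.1.10]{PS99}. That is exactly your stratified Euler count, which works because $C_f$ is reduced and nodal for a standard conic bundle, so $e(C_f^{\mathrm{sm}})=2-2p_a(C_f)$.

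The genuine difference is part (1), which the paper settles purely by citing \cite[Prop.~7.1.8(iii)]{PS99} and \cite[Lemma~6]{ScTa20}. Your test-surface computation is a valid self-contained substitute, and it shows where the constant comes from. For general very ample $\Gamma$, the surface $S=f^{-1}(\Gamma)$ is smooth. Since $\nu_S$ is pulled back from the curve $\Gamma$, $c_1(\nu_S)^2=0$. Hence $\langle p_1(X),[S]\rangle=3\,\mathrm{sign}(S)=-3\,\Gamma\cdot C_f$, with one $-3$ for each blow-up. The same $S$ also gives the first identity, since $u|_S=-K_{S/\Gamma}$ and $K_{S/\Gamma}^2=-\Gamma\cdot C_f$, so you do not need to cite Beauville either.

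There is one step you must add. Pairing with curve classes only determines a class in $H_2(Y,\mathbb{Q})$ modulo $NS(Y)^{\perp}$, and this complement is nonzero when $p_g(Y)>0$. To close the argument, use three facts:
\begin{enumerate}
\item $f_*D(p_1(X))+3[C_f]$ and $f_*D(u^2)+[C_f]$ are algebraic classes, because Chern classes are cycle classes and proper push-forward preserves algebraicity.
\item The intersection form is nondegenerate on $NS(Y)\otimes\mathbb{Q}$ by the Hodge index theorem.
\item $H_2(Y,\mathbb{Z})$ is torsion free.
\end{enumerate}
Together these force both classes to vanish. With that added, your heuristic localization over $Y\setminus C_f$ is unnecessary and can be dropped.
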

\begin{proof}
(1) This follows from  \cite[Proposition 7.1.8 (iii)]{PS99} and \cite[Lemma 6]{ScTa20}.

(2) Since $f \colon X \to Y$ is a conic bundle (Theorem \ref{thm:mori} (3)),  by \cite[Lemma 7.1.10 (i)]{PS99}
$$e(X)=2(e(Y)-p_a(C_f)+1).$$
By  Lemma \ref{lem:b2}, $b_2(X)=b_2(Y)+1$. This shows $b_3(X)=2(p_a(C_f)-1)$. The arithmetic genus of $C_f$ equals $1+\frac{1}{2}([C_f] \cdot [C_f] - \langle c_1(Y), [C_f] \rangle )$ (see e.g. \cite[I.15]{Beau96}).

(3) For any $y \in H^2(Y,\Z)$, 
$$\langle f_*D(c_1u), y \rangle = \langle D(c_1u), y \rangle = \langle c_1 u y ,[X] \rangle = 2 \langle c_1y, [Y] \rangle$$
where the second equality follows from equation (\ref{eq:u2}). This shows that $f_*D(c_1u)=D_Y(2c_1)$. On the other hand, 
by the equations in (1),  and the fact $f_*D(c_2)=0$ (Lemma \ref{lem:poincare}) one has
$$-2[C_f] = f_*(D(2u^2))= f_*D(c_1u + c_2)= f_*D(c_1 u),$$
therefore  $D_Y(c_1)=-[C_f]$.

(4) Since $u=c_1(X)-c_1(Y)$, we have
\begin{equation}
\begin{aligned}
c_1(X)^3 & =  (u+c_1(Y))^3 \\
 & =  u^3 + 3u^2 c_1(Y) + 3uc_1(Y)^2 \\
 & =  u^3+3c_1c_1(Y)[Y] + 6c_1(Y)^2[Y]
 \end{aligned}
 \label{eq:c1X3}
\end{equation}
In which 
$$u^3=\frac{1}{2}(c_1u+c_2)u=\frac{1}{4}\langle (c_1^2+2c_2)u, [X] \rangle = \frac{1}{2}\langle c_1^2+2c_2,  [Y] \rangle = \langle c_2, [Y] \rangle + \frac{1}{2}[C_f] \cdot [C_f],$$
\begin{equation}\label{eq:c1c1Y}
\langle c_1c_1(Y), [Y]\rangle = \langle c_1(Y), D_Y(c_1) \rangle = \langle c_1(Y), -[C_f] \rangle = b_3(X)-[C_f] \cdot [C_f],
\end{equation}
\begin{equation}\label{eq:c1Y2}
\langle c_1(Y)^2,[Y]\rangle = 12\chi(Y, \oo_Y) -e(Y) = 12\chi(X, \oo_X)-b_2(X)-1.
\end{equation}
Therefore we get 
$$\langle c_1(X)^3, [X] \rangle = \langle c_2, [Y] \rangle -3e(X) + 72\chi(X, \oo_X) - \frac{5}{2} [C_f] \cdot [C_f].$$

(5) By Hirzebruch-Riemann-Roch formula 
$$\langle c_1(X) p_1(X), [X] \rangle =  \langle c_1(X)^3-2c_1(X)c_2(X), [X] \rangle =\langle c_1(X)^3, [X] \rangle -48 \chi( \oo_X).$$ 
On the other hand, 
\begin{eqnarray*}
\langle c_1(X) p_1(X),[X] \rangle & = & \langle (u+c_1(Y))(p+3u^2), [X]\rangle \\
& = & \langle u p +3u^3 + 3c_1(Y) u^2, [X] \rangle \\
& = & 2 \langle p , [Y] \rangle+ 3 \langle c_2, [Y] \rangle +  \frac{3}{2} [C_f] \cdot [C_f] + 3b_3(X)-3 [C_f] \cdot [C_f] 
\end{eqnarray*}
\end{proof}

\subsection{The classification}
For Mori fiber spaces $f \colon X \to Y$ and $f' \colon X' \to Y'$ with $\dim Y= \dim Y'=2$, if $f$ is a smooth fibration and $f'$ is singular, then by comparing Formula (\ref{eq:leray1}) and Proposition \ref{prop:h2} one see that their  cohomology rings are not isomorphic, hence $X$ and $X'$ are not diffeomorphic. This proves the first statement in Theorem \ref{thm:main} (4). To prove the main theorem for the case $\dim Y=2$, it suffices to consider smooth fibrations and singular fibrations separately. In the following we first study the classification of smooth fibrations and prove Theorem \ref{thm:main} (4a). Then we study the classifciation of singular fibrations and prove Theorem \ref{thm:main} (4b).
\begin{proposition}\label{lem:H2Y}
Let $f \colon X \to Y$, $f' \colon X' \to Y'$ be simply connected $3$-dimensional Mori fiber spaces with torsion free homology. Assume $b_2(Y)\ge 3$. Let $g \colon X' \to X$ be an orientation-preserving diffeomorphism, then $g^*|_{H^2(Y,\Z)} \colon H^2(Y,\Z) \to H^2(Y',\Z)$ is an isomorphism.
\end{proposition}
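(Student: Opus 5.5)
The plan is to characterize $f^*H^2(Y,\Z)\subset H^2(X,\Z)$ intrinsically, using only the cubic form. Then any orientation-preserving diffeomorphism, which preserves the cubic form, must carry $f'^*H^2(Y',\Z)$ to $f^*H^2(Y,\Z)$. Here $\dim Y=2$; the argument should also work when $\dim Y'=2$, since $b_2(X)=b_2(X')$ forces $b_2(Y')=b_2(Y)\ge 3$ by Lemma \ref{lem:b2}. The characterization I aim for is the following: $f^*H^2(Y,\Z)$ is the unique subgroup $W\subset H^2(X,\Z)$ of rank $b_2(X)-1$ on which the cubic form vanishes identically, i.e.\ $x\cdot y\cdot z=0$ for all $x,y,z\in W$.

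First, $H^2(Y,\Z)$ has this property, because a triple product of classes pulled back from $Y$ lies in $f^*H^6(Y,\Z)=0$. It has the right rank by Lemma \ref{lem:b2}. Both in the smooth case (\ref{eq:leray1}) and in the singular case (Proposition \ref{prop:h2}), $H^2(X,\Z)=H^2(Y,\Z)\oplus\z\langle u\rangle$. Moreover $u\cdot\sigma_Y=k\neq 0$, with $k=1$ for a $\p^1$-bundle and $k=2$ by (\ref{eq:u2}). Hence for $y,y'\in H^2(Y,\Z)$ we get $u\cdot y\cdot y'=k\,(y\cdot y')_Y$, where $(\cdot)_Y$ is the unimodular intersection form of $Y$.

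For uniqueness, suppose $W$ is a subgroup of rank $b_2(Y)$ on which the cubic form vanishes, and suppose $W\neq H^2(Y,\Z)$. Then the $u$-coordinate projection $W\to\z$ is nonzero, so I pick $w=au+y_0\in W$ with $a\neq 0$. The subgroup $V=W\cap H^2(Y,\Z)$ has rank at least $b_2(Y)-1$. For $v,v'\in V$ we have
$$0=w\cdot v\cdot v'=a\,u\cdot v\cdot v'+y_0\cdot v\cdot v'=ak\,(v\cdot v')_Y,$$
so $V$ is totally isotropic for the nondegenerate form on $H^2(Y,\Z)$. A totally isotropic subspace of a nondegenerate form of rank $n$ has rank at most $n/2$. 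This gives $b_2(Y)-1\le b_2(Y)/2$, i.e.\ $b_2(Y)\le 2$, contradicting $b_2(Y)\ge 3$. Therefore $W\subseteq H^2(Y,\Z)$ rationally. A rank count shows $W\otimes\q=H^2(Y,\q)$.

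Finally, $g^*$ preserves the cubic form, so $g^*(H^2(Y,\Z))$ is a rank-$b_2(Y')$ subgroup of $H^2(X',\Z)$ on which the cubic form vanishes. By the uniqueness applied to $X'$, it is contained in $H^2(Y',\q)\cap H^2(X',\Z)=H^2(Y',\Z)$, the latter being a direct summand. The same argument applied to $(g^{-1})^*$ gives the reverse inclusion. Hence $g^*$ restricts to an isomorphism $H^2(Y,\Z)\to H^2(Y',\Z)$.

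The only delicate point is the isotropy-rank bound, and this is exactly where the hypothesis $b_2(Y)\ge 3$ enters. For $b_2(Y)\le 2$ the argument genuinely fails: for example, $\p^1\times\p^1$ contains a rank-one isotropic subspace. One also needs the uniform fact $u\cdot\sigma_Y\ne 0$, which covers both the smooth and the singular fibration cases.
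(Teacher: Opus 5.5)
Your proof is correct and is essentially the paper's argument: both rest on the fact that $g^*H^2(Y,\z)\cap H^2(Y',\z)$ has rank at least $b_2(Y)-1>b_2(Y)/2$, so it cannot be totally isotropic. The paper uses this to find $\omega_1',\omega_2'$ with $\omega_1'\omega_2'\neq 0$, deduces that $g^*\sigma_Y$ is a nonzero multiple of $\sigma_{Y'}$, and identifies $H^2(Y,\z)$ with the kernel of $\alpha\mapsto\langle\alpha\cup\sigma_Y,[X]\rangle$; you instead phrase the same rank count as an intrinsic characterization of $H^2(Y,\z)$ as the maximal rank-$b_2(Y)$ subgroup on which the cubic form vanishes (only $u\cdot\sigma_Y\neq 0$ matters, which is fortunate since in the bundle case $u\cdot\sigma_Y=-1$, not $1$).
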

\begin{proof}
Consider the homomorphism $\Phi \colon H^2(X,\Z) \to \z$, $\Phi(\alpha)=\langle \alpha \cup \sigma_Y, [X] \rangle$, where $\sigma_Y$ is the fundamental class of $Y$. By Poincar\' e duality, $\Phi$ is non-trivial, and $H^2(Y,\Z)$ is a direct summand of $H^2(X,\Z)$ (formula (\ref{eq:leray1}) or Proposition \ref{prop:h2}) on which $\Phi$ vanishes. Therefore $H^2(Y,\Z) = \ker \Phi$. Similarly, we define $\Phi' \colon H^2(X',\Z) \to \z$ and have $H^2(Y',\Z) = \ker \Phi'$. The lemma claims that $g^*$ maps $\ker \Phi$ isomorphically to $\ker \Phi'$. To prove this statement, it suffices to show that $g^*(\sigma_Y)$ is a non-zero multiple of $\sigma_{Y'}$.

The intersection pairing $\cup \colon H^2(Y,\Z) \times H^2(Y,\Z) \to H^4(Y,\Z)$ is non-degenerate, the rank of a subgroup of $H^2(Y,\Z)$ on which the pairing vanishes is at most half of the rank of $H^2(Y,\Z)$. Now $g^*(H^2(Y,\Z)) \cap H^2(Y',\Z)$ is a subgroup of $H^2(Y',\Z)$ of rank at least $b_2(Y)-1 $. When $b_2(Y) \ge 3$, $b_2(Y)-1 > b_2(Y)/2$,  therefore there exist elements $\omega'_1$, $\omega'_2 \in g^*(H^2(Y,\Z)) \cap H^2(Y',\Z)$   such that $\omega'_1 \cup \omega'_2 \ne 0$. Let $\omega'_i = g^* \omega_i$ for $i=1,2$. Then $g^*(\omega_1 \cup \omega_2)=\omega'_1 \cup \omega'_2$, where $\omega_1 \cup \omega_2$ is a non-zero multiple of $\sigma_Y$, and $\omega'_1 \cup \omega'_2$ is a non-zero multiple of $\sigma_Y'$. 
\end{proof}

\begin{thm}\label{thm:dim2smooth}
Let $f \colon X \to Y$, $f' \colon X' \to Y'$ be a simply connected $3$-dimensional MFS with torsion free homology and $\dim Y =\dim Y'=2$, and empty discriminant curves. Then $X$ is oriented diffeomorphic to $X'$ if and only if they have the same $w_2$-type, $e(X)=e(X')$, and  one the following conditions are fulfilled
\begin{equation}\label{eq:chiK1}
 \chi(\oo_X) = \chi(\oo_{X'}), \ \ K_X^3=K_{X'}^3;
\end{equation}
or
\begin{equation}\label{eq:chiK2}
\chi(\oo_X) + \chi(\oo_{X'}) = \frac{1}{4} e(X), \ \ K_X^3 + K_{X'}^3=-12e(X). 
\end{equation}

These invariants satisfy the inequality $e(X) \ge 6\chi(\oo_X)$, with equality if and only if $b_2(X)=2$ (i.e., $Y=\p^2$). Moreover, if $b_2(X)$ is even or $e(X) \ge 12\chi(\oo_X)$, the case in equation (\ref{eq:chiK2}) cannot occur.
\end{thm}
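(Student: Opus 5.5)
The plan is to reduce everything to Theorem \ref{thm:6mfds}, using the explicit ring structure (\ref{eq:leray1}), (\ref{eq:leray2}) and the characteristic classes (\ref{eq:class}) of $X=\p(E)$. Write $\Delta=\langle c_1^2-4c_2,[Y]\rangle$. Since $X$ is simply connected with torsion free homology, it is enough to compare $b_3$, the cubic form on $H^2(X,\Z)$, $w_2$ and $p_1$.

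\textbf{Step 1: compute the invariants.}
\begin{itemize}
\item Because $b_1=b_3=0$ for $Y$, we get $b_3(X)=0$ and $e(X)=2e(Y)$.
\item Write a class of $H^2(X,\Z)$ as $au+y$ with $y\in H^2(Y,\Z)$. Using $u^2=c_1u-c_2$, the cubic form becomes $a^3(c_1^2-c_2)+3a^2(c_1\cdot y)+3a(y\cdot y)$.
\item Replacing $u$ by $u+y_0$ replaces $c_1$ by $c_1+2y_0$ and leaves $\Delta$ unchanged. So the cubic form is determined by three pieces of data: the intersection form of $Y$, the class $w_2(E)=c_1 \bmod 2$, and $\Delta$.
\item By (\ref{eq:class}), $p_1(X)=(\Delta+p_1(Y))\sigma_Y$, and by (\ref{eq:w2}), $w_2(X)=w_2(Y)+w_2(E)$.
\end{itemize}
Now translate these into the numerical invariants. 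Equation (\ref{eq:holo}) and Lemma \ref{lem:sign} give $\mathrm{sign}(Y)=4\chi(\oo_X)-e(X)/2$. Equation (\ref{eq:cchi}) expresses $\Delta$ linearly in terms of $K_X^3$, $\chi(\oo_X)$ and $\mathrm{sign}(Y)$. The $w_2$-type records two facts: whether the form of $Y$ is even, and whether $w_2(E)$ is $0$, equal to $w_2(Y)$, or neither.

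\textbf{Step 2: necessity.}
\begin{itemize}
\item When $b_2(Y)\ge 3$, Proposition \ref{lem:H2Y} shows that a diffeomorphism $g$ preserves $H^2(Y)=\ker\Phi$, so $g^*\sigma_Y=\pm\sigma_{Y'}$.
\item If the sign is $+$, then $g^*$ is an isometry $H^2(Y)\to H^2(Y')$ and $u\mapsto u'+y$. Matching the cubic forms and $p_1$ then gives condition (\ref{eq:chiK1}).
\item If the sign is $-$, then $g^*$ is an anti-isometry. Orientation of $X$ is preserved, which forces $u\mapsto -u'+y$. Then $\mathrm{sign}(Y')=-\mathrm{sign}(Y)$, which yields $\chi(\oo_X)+\chi(\oo_{X'})=e(X)/4$, and the $\Delta$ and $p_1$ relations yield $K_X^3+K_{X'}^3=-12e(X)$. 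This is condition (\ref{eq:chiK2}).
\item The cases $b_2(Y)\le 2$, namely $Y=\p^2$, $\p^1\times\p^1$, or a Hirzebruch-type surface, I will treat by hand.
\end{itemize}

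\textbf{Step 3: sufficiency.} This is where I expect the main difficulty. Given the numerical data, I must build an isometry (or anti-isometry) $\psi\colon H^2(Y)\to H^2(Y')$ carrying $w_2(E)$ to $w_2(E')$ modulo $2$; Step 1 then shows this suffices.
\begin{itemize}
\item $Y$ is simply connected with $b^+=2p_g+1$. Since $Y=\p^2$ is the only simply connected surface with definite form (by Bogomolov–Miyaoka–Yau), the form is indefinite outside that case. Hence it is determined by rank, signature and type.
\item To move $w_2(E)$, I plan to lift it to a primitive class $c_1$ of suitable norm, using the freedom $c_1\mapsto c_1+2y_0$. I will match the norm of $c_1$ with that of $c_1'$; this is possible because $c_1^2\equiv\Delta \pmod 4$ is fixed.
\item Then I apply Wall's transitivity theorem \cite{Wall62} for primitive vectors of equal norm and type in an indefinite unimodular lattice. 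The $w_2$-type guarantees that the types agree: characteristic exactly when $w_2(E)=w_2(Y)$.
\item Low-rank cases where the transitivity theorem fails need a separate check.
\end{itemize}

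\textbf{Step 4: the inequality.} Noether's formula gives $12\chi=c_1(Y)^2+e(Y)$, and BMY gives $c_1(Y)^2\le 3e(Y)$. Together these give $e(X)=2e(Y)\ge 6\chi$. Equality forces a ball quotient, and the only simply connected one is $\p^2$.

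For condition (\ref{eq:chiK2}), note that $\mathrm{sign}(Y')=-\mathrm{sign}(Y)$ forces $b^+(Y)=b^-(Y)$. Since $b^+$ is odd, $b_2(Y)$ is then even, so $b_2(X)=b_2(Y)+1$ is odd. This rules out (\ref{eq:chiK2}) when $b_2(X)$ is even.

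Finally, apply the inequality to $X'$: $e\ge 6\chi'=\tfrac32 e-6\chi$ gives $e(X)\le 12\chi(\oo_X)$. Refining this (for instance using $e(Y)=2+b_2(Y)$ and parity) should exclude the boundary case $e(X)=12\chi(\oo_X)$, which completes the last claim.
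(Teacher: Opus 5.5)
Your outline follows the paper's proof: the Wall--Jupp reduction, Proposition \ref{lem:H2Y} plus orientation to get $g^*(u)=\pm u'+y$, an (anti-)isometry of $H^2(Y,\z)$ respecting $w_2(E)$, and Noether plus Miyaoka--Yau for the inequalities. However, Step 3 has a genuine gap.

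For $w_2$-type $III_1$ you plan to lift $w_2(E)$ and $w_2(E')$ to primitive classes of equal norm and then apply Wall's transitivity, treating its failures as a low-rank exception. Wall's theorem (Theorem \ref{thm:quadr}) needs $\mathrm{rank}\,V-|\mathrm{sign}\,V|\ge 4$, or $V=(1)\oplus q(-1)$ with $q\le 7$. It genuinely fails for $(1)\oplus q(-1)$ with $q$ large. That is the intersection form of every base with $p_g=0$ and odd form, for example $\p^2\#q\overline{\p^2}$ for every $q$, so it is an infinite family, not a low-rank corner.

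Concretely, take $(1)\oplus 9(-1)\cong(1)\oplus(-1)\oplus(-E_8)$. The generators of the $(1)$-summand in the two splittings are primitive, ordinary, of norm $1$. Their orthogonal complements, $9(-1)$ and $(-1)\oplus(-E_8)$, are not isometric, so no isometry carries one to the other. Matching norms of lifts therefore gives you no control over which orbit your lifts land in. This is why Theorem \ref{thm:dim2singular}, where $c_1$ itself must be matched, has the exceptional range $\chi(\oo_X)=1$, $b_2(X)\ge 10$, while the present theorem has none.

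Here only the mod-$2$ class matters, and the paper proves the mod-$2$ statement directly (Lemma \ref{lem:quadratic}): $O(V)$ acts transitively on nonzero classes of $V\otimes\z/2$ with fixed $v^2 \bmod 4$ and type. When $\mathrm{rank}-|\mathrm{sign}|\ge4$ this follows from Wall, by sending a primitive lift to an explicit vector such as $(2k+1,0,\dots;2k,0,\dots)$. For $(1)\oplus q(-1)$ it is done by hand, using reflections $\sigma_u(v)=v+(v\cdot u)u$ in vectors with $u^2=-2$ that reduce the number of nonzero coordinates mod $2$. That argument, or an equivalent one, is what your Step 3 is missing.

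The necessity part for $b_2(Y)\le 2$ is also left open, and it is not mere bookkeeping, because Proposition \ref{lem:H2Y} is unavailable there. When $p_1(X)\ne0$, the paper uses $H^2(Y,\z)=\ker\bigl(a\mapsto\langle a\cup p_1(X),[X]\rangle\bigr)$. When $p_1(X)=0$, $K_X^3$ follows from Riemann--Roch. However, for $b_2(Y)=2$ a spin base gives $w_2$-type $0$ or $II$ and a non-spin base gives type $I$ or $III_0$, and $w_2(X)$ alone does not separate these. The paper excludes the mixed case by comparing the subgroups of $H^2$ generated by square-zero classes, which have index $2$ in one case and index $4$ in the other.

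There is also a small slip in Step 4. The relation $\mathrm{sign}(Y')=-\mathrm{sign}(Y)$ gives $b^-(Y)=b^+(Y')$, not $b^+(Y)=b^-(Y)$. Your parity conclusion survives because $b^+(Y')$ is odd. The boundary case $e(X)=12\chi(\oo_X)$ is excluded because it forces equality in $e(X')\ge 6\chi(\oo_{X'})$, hence $Y'=\p^2$.
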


\begin{proof}
We first show that these conditions are necessary. We begin with the case $b_2(Y) \ge 3$, so that by Proposition \ref{lem:H2Y} an orientation preserving diffeomorphism $X' \to X$ induces an isomorphism $H^2(Y,\z) \to H^2(Y',\z)$.

Let $x=2u-c_1$, where $u$ and $c_1$ are given in formulas (\ref{eq:leray1}) and (\ref{eq:leray2}), respectively. Let $\{ y_1, \cdots, y_r \}$ be a basis of $H^2(Y,\Z)$, then by formula (\ref{eq:leray1}), $x, y_1, \cdots, y_r$ generate an index $2$ subgroup $V$ of $H^2(X,\z)$, containing $H^2(Y,\z)$. The cubic form 
$$H^2(X,\z) \times H^2(X,\z) \times H^2(X,\z) \to \z$$
is determined by the following data 
\begin{equation}\label{eq:cubic}
x^3=-\langle 2c_1^2-8c_2, [Y] \rangle, \ \ x^2y_i=0, \ \ xy_iy_j = -2\langle y_iy_j, [Y] \rangle.
\end{equation}
The same applies to $Y'$.

Let $g \colon X' \to X$ be an orientation-preserving diffeomorphism, $g^* \colon H^2(X,\z) \to H^2(X',\z)$ be the induced isomorphism, then $g^*(V)$ is a subgroup of $H^2(X',\z)$ of index 2 containing $H^2(Y',\z)$, therefore $g^*(V)=V'$. Under the bases $\{x, y_1, \cdots, y_r\}$ and $\{x', y'_1, \cdots, y'_r\}$, the isomorphism $g^* \colon V \to V'$ is represented by the matrix 
$$\left ( \begin{array}{cc}
\pm 1 & b \\
0 & T 
\end{array} \right ),$$
where $b=(b_1, \cdots, b_r)$.
Then we have the following identity 
$$0= x^2y_j = (\pm x' + \sum b_iy_i)^2 g^*(y_j)=\pm 2x' \sum b_i y_i g^*(y_j).$$
This implies $\sum b_i y_i g^*(y_j)=0$ for all $1 \le j \le r$. Since $\{ g^*(y_i), \cdots, g^*(y_r)\}$  is a basis of $H^2(Y',\z)$, and the intersection form is non-degenerate, we have $\sum b_iy_i=0$. Therefore $g^*(x)=\pm x'$, and $g^* \colon H^2(Y,\z) \to H^2(Y',\z)$ is an isometry or anti-isometry. 

If $g^*(x)=x'$, then $g^*(x^3)=x'^3$,  and $g^* \colon H^2(Y,\z) \to H^2(Y',\z)$ is an isometry. Therefore $\mathrm{sign}(Y)=\mathrm{sign}(Y')$, which by Lemma \ref{lem:sign} implies $\chi(\oo_X) =\chi(\oo_{X'})$. From the identity $g^*(x^3)=x'^3$ we get $\langle c_1^2-4c_2, [Y] \rangle = \langle c_1'^2-4c_2', [Y'] \rangle$ (see equation (\ref{eq:cubic})). Together with $\chi(\oo_X) =\chi(\oo_{X'})$, and $\mathrm{sign}(Y)=\mathrm{sign}(Y')$, equation (\ref{eq:cchi}) implies that $\langle c_1(X)^3,[X]\rangle = \langle c_1(X')^3,[X']\rangle $.

Similarly, if $g^*(x)=-x'$, then $g^* \colon H^2(Y,\z) \to H^2(Y',\z)$ is an anti-isometry. Therefore $\mathrm{sign}(Y)=-\mathrm{sign}(Y')$, which by Lemma \ref{lem:sign} implies 
$$4(\chi(\oo_X) +\chi(\oo_{X'}))=2e(Y)=e(X).$$
From the identity $g^*(x^3)=-x'^3$ we get $\langle c_1^2-4c_2, [Y] \rangle = -\langle c_1^2-4c_2, [Y'] \rangle$. Then from equation (\ref{eq:cchi}) we have
$$\langle c_1(X)^3, [X] \rangle + \langle c_1(X')^3, [X'] \rangle =48(\chi(\oo_X) + \chi( \oo_{X'})) =12e(X).$$ 

Finally, recall $c_1(X)=c_1(Y) -x$ (see (\ref{eq:class})), therefore $w_2(X)-w_2(Y) \equiv x \pmod 2$.  Since $g^*w_2(X)=w_2(X')$, $g^*w_2(Y)=w_2(Y')$, $g^*(x)=\pm x'$, we see that the two Mori fiber spaces have the same $w_2$-type. 

Next we prove the necessary part for the case $b_2(Y) \le 2$. It is known that the only simply-connected compact complex surface with definite intersection form is $\p^2$ (\cite[p.376]{BHPV}). Therefore, if $b_2(Y)=b_2(Y')=1
$, then $\mathrm{sign}(Y)=\mathrm{sign}(Y')=1$; if $b_2(Y)=b_2(Y')=2
$, then $\mathrm{sign}(Y)=\mathrm{sign}(Y')=0$. Then by Lemma \ref{lem:sign}, $\chi(\mathcal O_X)=\chi(\mathcal O_X')=1$.

 If $p_1(X)$ and $p_1(X')$ are non-zero, then we define non-trivial homomorphisms
$$\Psi \colon H^2(X,\z) \to \z,  \ \ \Psi(a)=\langle a \cup p_1(X) ,[X] \rangle$$
and 
$$\Psi' \colon H^2(X',\z) \to \z,  \ \ \Psi'(a)=\langle a \cup p_1(X') ,[X'] \rangle.$$
By equation (\ref{eq:class}), we have $\ker \Psi = H^2(Y,\z)$, $\ker \Psi'=H^2(Y',\z)$. For a diffeomorphism $g \colon X' \to X$, $g^*p_1(X) = p_1(X')$, therefore we have $g^*|_{H^2(Y,\z)} \colon H^2(Y,\z) \to H^2(Y',\z)$ is an isomorphism. The argument for the preceding case still works and shows that $\langle c_1(X)^3,[X]\rangle = \langle c_1(X')^3,[X']\rangle $, and the two Mori fiber spaces have the same $w_2$-type.

If $p_1(X)=p_1(X')=0$, then by equation (\ref{eq:cp1}) and the fact $\chi( \mathcal O_X)=\chi( \mathcal O_X')$ we have $\langle c_1(X)^3,[X]\rangle = \langle c_1(X')^3,[X']\rangle $. It remains to show that the Mori fiber spaces have the same $w_2$-type.  If $b_2(Y)=1$, then $Y$ and $Y'$ are non-spin.  The assumption $p_1(X)=0=p_1(X')$ implies (equation (\ref{eq:class}) and (\ref{eq:w2})) 
$$w_2(X)-w_2(Y) \equiv c_1 \not \equiv 0 \pmod 2, \ \ w_2(X')-w_2(Y') \equiv  c_1 ' \not \equiv 0 \pmod 2,$$
therefore the two Mori fiber spaces are both of $w_2$-type $0$ (when $X$ and $X'$ are spin) or   $III_1$ (when $X$ and $X'$ are non-spin). In the case $b_2(Y)=2$, if both $Y$ and $Y'$ are spin, then the two Mori fiber spaces are both of $w_2$-type $0$ or $II$; if both $Y$ and $Y'$ are non-spin, under the standard basis of the intersection form of $Y$, we write $c_1=(a,b)$, then the assumption $p_1(X)=0$ implies $a^2-b^2 \equiv 0 \pmod 4$, therefore $a$ and $b$ are both even or odd; the same applies to $c_1'=(a',b')$. This shows that the two Mori fiber spaces are both of $w_2$-type $I$ or $III_0$. We finish the proof of the necessary part by excluding the case $Y$ spin and $Y'$ non-spin.

Let $x=2u-c_1$, $x'=2u'-c_1'$, $\{ y_1, y_2\}$ be the standard basis of the intersection form on $H^2(Y,\z)$, $\{y_1', y_2'\}$ be the standard basis of the intersection form on $H^2(Y',\z)$. By equations (\ref{eq:cubic}) and Poincar\' e duality, $x^2=y_1^2=y_2^2=0$. Assume there is an orientation-preserving diffeomorphism $g \colon X' \to X$, then the identities $g^*(x)^2=g^*(y_1)^2=g^*(y_2)^2=0$ (by a direct calculation) imply that $g^*(x)$, $g^*(y_1)$ and $g^*(y_2)$ equal either $mx'$ or $n(y_1' \pm y_2')$, $m$, $n \in \mathbb Z$. But this leads to a contradiction since $x$, $y_1$ and $y_2$ generate a subgroup of index $2$ in $H^2(X,\z)$, while the subgroup generated by $x'$, $y_1' + y_2'$ and $y_1' - y_2'$ is of index $4$ in $H^2(X',\z)$.

Now we prove the sufficient part. Assume the invariants of $X$ and $X'$ satisfy the identities stated in the theorem, we are going to show that there is an isomorphism between their cubic forms preserving the characteristic classes.  Then by Theorem \ref{thm:6mfds} they are oriented diffeomorphic. The identity $e(X)=e(X')$ implies $e(Y)=e(Y')$. By lemma \ref{lem:sign}, the identity $\chi(\oo_X) = \chi(\oo_{X'})$ implies $\mathrm{sign}(Y) =  \mathrm{sign}(Y')$, and the identity $\chi(\oo_X) = e(X')/4-\chi(\oo_{X'})$ implies $\mathrm{sign}(Y) =  -\mathrm{sign}(Y')$.  The intersection forms of $Y$ and $Y'$ are both even or odd since they have the same $w_2$-type. Tthe intersection forms of $Y$ and $Y'$ are indefinite, except for the case $Y=\p^2$. By the classification of indefinite symmetric unimodular forms (\cite{MH73}) they are isometric (if $\mathrm{sign}(Y) = \mathrm{sign}(Y')$) or anti-isometric (if $\mathrm{sign}(Y) =- \mathrm{sign}(Y')$). 

If the $w_2$-types are $0$, $I$ or $III_0$, then by equation (\ref{eq:w2}) we have $w_2(E) = 0$ or $w_2(E)=w_2(Y)$. Let $\varphi \colon H^2(Y,\z) \to H^2(Y',\z)$ be an isometry (or anti-isometry), then the mod $2$ reduction of $\varphi$ maps $w_2(Y)$ to $w_2(Y')$, since they are the characteristic element of the intersection forms. In other words, $\varphi(c_1(E)) - c_1(E') \equiv 0 \pmod 2$. 
If the $w_2$-types are $II$ or $III_1$, the conditions (\ref{eq:chiK1}) or (\ref{eq:chiK2}) imply 
$$\langle c_1^2-4c_2, [Y] \rangle = \langle c_1'^2-4c_2', [Y'] \rangle, \ \ \mathrm{or} \ \ \langle c_1^2-4c_2, [Y] \rangle + \langle c_1'^2-4c_2', [Y'] \rangle=0.$$
Therefore $\langle c_1^2, [Y] \rangle \equiv \langle c_1'^2, [Y'] \rangle \pmod 4$ or $\langle c_1^2, [Y] \rangle \equiv -\langle c_1'^2, [Y'] \rangle \pmod 4$.
By  Lemma \ref{lem:quadratic}, there exists an isometry (or anti-isometry) $\varphi \colon H^2(Y,\z) \to H^2(Y',\z)$ such that $\varphi(c_1(E)) - c_1(E') \equiv 0 \pmod 2$. 

Therefore in all cases we have an isometry or anti-isometry $\varphi \colon H^2(Y,\z) \to H^2(Y',\z)$ with $\varphi(c_1(E)) - c_1(E') \equiv 0 \pmod 2$. Denote $v=  (\varphi(c_1(E)) - c_1(E')) /2 \in H^2(Y')$, we extend $\varphi$ to an isomorphism $\varphi \colon H^2(X,\z) \to H^2(X',\z)$ by defining
$$\varphi(u)= \left \{ \begin{array}{ll}
u'+v & \textrm{if $\varphi$ is an isometry} \\
-u'+v &  \textrm{if $\varphi$ is an anti-isometry}
\end{array} \right. 
$$
Then $\varphi(x)= x'$ or $-x'$. If the condition (\ref{eq:chiK1}) holds, then by equation (\ref{eq:cp1}) one has
\begin{equation}\label{eq:cp2}
\langle c_1(X)p_1(X), [X] \rangle = \langle c_1(X')p_1(X'), [X'] \rangle.
\end{equation} 
Then equations (\ref{eq:cp}) and (\ref{eq:cubic}) imply that $\varphi$ is an isomorphism between the cubic forms. If the condition (\ref{eq:chiK2}) holds, then by equations (\ref{eq:cp1}) and (\ref{eq:cp}) one has $\langle c_1^2-4c_2, [Y] \rangle + \langle c_1'^2-4c_2', [Y'] \rangle=0$, which guarantees that $\varphi$ is an isomorphism between the cubic forms (note that in this case $\varphi(x)=-x'$).

 By equations (\ref{eq:class}), $\langle c_1(X)p_1(X), [X] \rangle = -\langle xp_1(X), [X] \rangle$. Therefore if the condition (\ref{eq:chiK1}) holds, from equation (\ref{eq:cp2})  one has $\langle xp_1(X), [X] \rangle = \langle x'p_1(X'), [X'] \rangle$. Let $\varphi_* \colon H_2(X',\z) \to H_2(X,\z)$ be the dual of $\varphi$, then 
$$\langle \varphi_*Dp_1(X'), x \rangle  =  \langle x' p_1(X'), [X'] \rangle = \langle xp_1(X), [X] \rangle = \langle Dp_1(X), x \rangle. $$
Note that for any $y \in H^2(Y,\z)$, $y' \in H^2(Y',\z)$, $\langle  y p_1(X), [X] \rangle = 0=\langle  y' p_1(X'), [X'] \rangle$, therefore  
$$\langle \varphi_*Dp_1(X'), y \rangle  =  \langle \varphi(y) p_1(X'), [X'] \rangle = 0 =  \langle Dp_1(X), y \rangle. $$
This shows that $\varphi_*Dp_1(X')=p_1(X)$. A similar calculation shows that under the condition (\ref{eq:chiK2}) the isomorphism $\varphi$ preserves the first Pontrjagin class as well. 

By construction, the mod $2$ reduction of $\varphi$ maps $w_2(Y)$ to $w_2(Y')$, $w_2(E)$ to $w_2(E')$, therefore  by equation (\ref{eq:w2}) it preserves the second Stiefel-Whitney class. Hence there is an orientation preserving diffeomorphism $g \colon X' \to X$ such that $g^*=\varphi$.

The restrictions of the invariants follow from the Miyaoka-Yau inequality and Noether's formula. Recall that we have $b_2(X)=b_2(Y)+1$ (Lemma \ref{lem:b2}) and $\chi(\oo_X)=\chi(\oo_Y)$ (equation (\ref{eq:holo})). If $b_2(X)>2,$ then the Miyaoka-Yaui inequality (Lemma \ref{lem:my}) implies $\langle 3c_2(Y), [Y] \rangle > \langle c_1^2(Y), [Y] \rangle$, which may be written as 
$$\langle 4c_2(Y), [Y] > \langle c_1^2(Y)+c_2(Y), [Y] \rangle.$$ 
By the Noether's formula, this inequality implies $e(Y)>3\chi(\oo_Y)$.  On the other hand, we have $e(X)=\sum_{\textnormal{even}}b_i(X)=2e(Y)$. This shows the inequality  $e(X)>6\chi(\oo_X)$.
If $b_2(X)=2$, then $Y=\p^2$, with $\chi(\oo_Y)=1$. Therefore $e(X)=6=6\chi(\oo_Y)$.

From the proof above one sees that there exist $X$ and $X'$ satisfy the condition (\ref{eq:chiK2}) only if there is a complex surface structure on $Y$ whose induced orientation is opposite to $Y$. This will not happen when $Y=\p^2$. Therefore in this case the Miyaoka-Yau inequality implies $e(Y) > 3 |\mathrm{sign}(Y)|$. Since $\mathrm{sign}(Y)=4\chi(\oo_X)-e(Y)$ (Lemma \ref{lem:sign}), a direct calculation shows $e(X) < 12 \chi(\oo_X)$. Furthermore,  the condition (\ref{eq:chiK2}) implies that $e(X)$ is divisible by $4$. Hence $b_2(X)$ must be an odd integer.

\end{proof}

\begin{thm}\label{thm:dim2singular}
Let $f \colon X \to Y$, $f' \colon X' \to Y'$ be simply connected $3$-dimensional MFSs with torsion free homology and $\dim Y =\dim Y'=2$, and non-empty discriminant curves $C_f$ and $C_{f'}$, respectively.  

Then if $X$ is oriented diffeomorphic to $X'$, then the two MFSs have the same $w_2$-type, $[C_f]$ and $[C_{f'}]$ have the same divisibility and type, $e(X)=e(X')$,  $b_3(X)=b_3(X')$ and one of the following conditions is fulfilled
\begin{equation}\label{eq:chiK31}
 \chi( \oo_X) = \chi( \oo_{X'}), \ \ K_X^3=K_{X'}^3, \ \textrm{$[C_f]$ and $[C_{f'}]$ have the same norm},
 \end{equation}
or
\begin{equation}\label{eq:chiK3}
\chi( \oo_X) + \chi( \oo_{X'}) = \frac{1}{4}(e(X)-b_3(X)), \ K_X^3+K_{X'}^3= -12e(X)+18b_3(X),
\end{equation}
$$\textrm{$[C_f]$ and $[C_{f'}]$ have opposite norm}.$$

Conversely, except in the case $\chi(\oo_X)=1$ and $b_2(X) \ge 10$, these conditions are sufficient for $X$ being oriented diffeomorphic to $X'$. In the exceptional range, these conditions determine the oriented diffeomorphism type only up to finitely many possibilities.

These invariants satisfy the inequality $e(X)+b_3(X) > 6\chi(\oo_X)$. Moreover, if $b_2(X)$ is even or  $e(X)+b_3(X) \ge 12\chi(\oo_X)$, the case of equation (\ref{eq:chiK3}) cannot occur.
\end{thm}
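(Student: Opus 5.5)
We follow the pattern of the proof of Theorem \ref{thm:dim2smooth}, now using the structure from Propositions \ref{prop:h2} and \ref{prop:top2}. By Proposition \ref{prop:h2}, $H^2(X,\z)=H^2(Y,\z)\oplus\z\langle u\rangle$ with $u=c_1(X)-c_1(Y)$. The cubic form is determined by
\begin{align*}
u\cdot y_i\cdot y_j=2\langle y_iy_j,[Y]\rangle, \qquad u^2 y=\tfrac12\langle c_1y,[Y]\rangle=-\tfrac12\langle y,[C_f]\rangle, \qquad u^3=\langle c_2,[Y]\rangle+\tfrac12[C_f]^2,
\end{align*}
and all products of three classes from $H^2(Y,\z)$ vanish. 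Here the middle identity uses Proposition \ref{prop:top2}(3), and by Proposition \ref{prop:top2}(1),(5) we have $p_1(X)=p+3u^2$. It is convenient to replace $u$ by $x=2u-c_1$ (or a suitable integral version) so that $x^2y=0$ is forced; however $2u-c_1$ need not span a complement over $\z$. I will therefore work directly with $u$ and track the constraint $u^2y=-\frac12\langle y,[C_f]\rangle$.

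\textbf{Necessity.} Let $g\colon X'\to X$ be an orientation-preserving diffeomorphism. First I show that $g^*$ carries $H^2(Y,\z)$ onto $H^2(Y',\z)$. When $b_2(Y)\ge3$ this is Proposition \ref{lem:H2Y}. When $b_2(Y)\le2$, I use $\ker\Psi$ with $\Psi(a)=\langle a\,p_1(X),[X]\rangle$, or a direct computation as in Theorem \ref{thm:dim2smooth}. Then $g^*(u)=\pm u'+v$ with $v\in H^2(Y')$. From $u\cdot y\cdot z=2\langle yz\rangle$, the restriction $g^*|_{H^2(Y)}$ is an isometry (sign $+$) or an anti-isometry (sign $-$), and this gives the relation on $\chi(\oo)$ via Lemma \ref{lem:sign}. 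Comparing $u^2 y$ gives $g_*$-compatibility of $\mp[C_f]$ up to a correction from $v$: explicitly $-\tfrac12[C_f]\mapsto \mp\tfrac12[C_{f'}]\pm 2D_Y v$ or similar. Hence $[C_f]\equiv[C_{f'}]$ modulo $4$ times classes. Because $H^2(Y)$ embeds as $\ker$ of a $g$-invariant functional, I expect the comparison to force divisibility and type to agree, and the norm to agree up to the sign of the isometry. One also needs that $v$ can be absorbed. The cleanest route is to verify that the $g$-invariant quantity $p_1(X)$ pushes forward to $-3[C_f]$, so that $g_*$ maps $[C_f]$ exactly to $\pm[C_{f'}]$. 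The invariants $e$, $b_3$ and $w_2$-type are clearly preserved (for $w_2$-type, argue as in Theorem \ref{thm:dim2smooth}, using $w_2(X)=c_1(X)\bmod2$). The formula for $K_X^3$ then follows from Proposition \ref{prop:top2}(4),(5) together with $g^*(u^3)$ and $g^*p_1$.

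\textbf{Sufficiency.} By Wall's classification of primitive elements in indefinite unimodular lattices \cite{Wall62}, elements of equal divisibility, type and norm in isometric indefinite unimodular lattices are related by an isometry. The lattices $H^2(Y)$ and $H^2(Y')$ are isometric or anti-isometric by $e$, sign and parity, as before. This gives $\varphi\colon H^2(Y)\to H^2(Y')$ with $\varphi_*[C_f]=\pm[C_{f'}]$. Extend it by $\varphi(u)=\pm u'$, choosing it to respect $w_2$ (via Lemma \ref{lem:quadratic}-type arguments if an adjustment $v$ is needed). Then check, using Proposition \ref{prop:top2}(4),(5) and the hypotheses on $\chi$, $K^3$, $e$, $b_3$, that $u^3$ and $\langle p,[Y]\rangle$ match. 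Theorem \ref{thm:6mfds} then concludes. Wall's theorem requires the lattice to contain enough hyperbolic summands, which fails for small rank. This is why the exceptional range $\chi(\oo_X)=1$ with small $b_2$ appears, since there $Y$ has small indefinite or definite form. In that range, finiteness follows because orbits of vectors of given norm and divisibility are finite in number.

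\textbf{Inequalities.} These come from Miyaoka--Yau (Lemma \ref{lem:my}) and Noether exactly as before, now with $e(X)=2e(Y)-b_3(X)$. The main obstacle I expect is the necessity step showing that the correction $v$ does not destroy $g_*[C_f]=\pm[C_{f'}]$ and the $w_2$-type matching. I would first try to resolve it through the $p_1$ pushforward identity.
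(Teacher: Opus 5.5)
Your necessity argument for $b_2(Y)\ge 3$ is sound and is essentially the paper's, with $u$ in place of the paper's $x=4u-c_1$. Two small corrections: it is $4u-c_1$, not $2u-c_1$, that satisfies $x^2y=0$; and $\langle u^2y,[X]\rangle=\langle c_1y,[Y]\rangle$, without the factor $\frac12$, since $u\cdot\sigma_Y=2$. The correction $v$ disappears as you hope: pair $g^*p_1(X)=p_1(X')$, with $p_1=p+3u^2$, against $y'\in H^2(Y')$ to get $\pm12\langle vy',[Y']\rangle=0$, hence $v=0$.

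The genuine gap in necessity is the case $b_2(Y)\le 2$. There $\Psi(a)=\langle a\,p_1(X),[X]\rangle$ does not cut out $H^2(Y)$: $\Psi(y)=3\langle u^2y,[X]\rangle=-3\langle y,[C_f]\rangle$, which is not identically zero because $C_f\ne\emptyset$. In Theorem \ref{thm:dim2smooth} the small-$b_2$ argument rests either on $\ker\Psi=H^2(Y)$ or on $p_1=0$. Here $p_1(X)\neq 0$ always and $\ker\Psi\ne H^2(Y)$, so neither branch transfers.

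The paper in fact never proves $g^*H^2(Y)=H^2(Y')$ when $b_2(Y)=1$. It gets $c_1=c_1'$ from $b_3(X)=c_1^2+3c_1$. It then gets $c_2=c_2'$ by comparing $vy^2$, $v^2y$ and $v^3$ in the rational basis $\{v,y\}$, where $v$ spans $\ker\Psi$, $g^*v=\pm v'$ and $g^*y=y'+av'$; this covers $a\ne0$ as well. For $b_2(Y)=2$ it splits into two cases:
\begin{itemize}
\item if $g^*H^2(Y)\cap H^2(Y')$ contains a class of nonzero square, the argument of Proposition \ref{lem:H2Y} goes through;
\item otherwise $g^*$ swaps an isotropic direction of $H^2(Y)$ with the $x'$-direction ($g^*e_2=\alpha x'$, $g^*x=\beta e_2'$), and this is excluded by a computation with $p_1(X)\cdot e_1$ and Riemann--Roch.
\end{itemize}
You need computations of this kind; the $\ker\Psi$ route does not work.

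In the sufficiency part, your account of the exceptional range is backwards, and so is the scope you give to Wall's theorem. For $\chi(\oo_X)\ge2$ one has $b^+(Y)=2p_g(Y)+1\ge3$ and $\mathrm{rank}-|\mathrm{sign}|\ge4$, so Theorem \ref{thm:quadr} applies. For $\chi(\oo_X)=1$ one has $b^+(Y)=1$. For the odd forms $(1)\oplus q(-1)$, transitivity on primitive vectors of given norm and type holds when $q\le7$, by the second clause of Theorem \ref{thm:quadr} (this includes $Y=\p^2$). It fails in general for $q\ge8$, that is, $b_2(X)=q+2\ge10$.

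So your claim that divisibility, type and norm determine the orbit in every indefinite unimodular lattice is false exactly in the true exceptional range, which is large $b_2$. Meanwhile the small-$b_2$ cases you set aside are asserted by the statement and have to be proved.

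Also, make no $w_2$-adjustment when extending $\varphi$. A ($\pm$)isometry automatically preserves the characteristic class $w_2(Y)$, and $w_2(X)\equiv u+w_2(Y)$. Moreover, $\varphi(u)=\pm u'+v$ with $v\neq0$ would change $u^2y$ by $\pm4\langle v\varphi(y),[Y']\rangle$ and break the cubic form.

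Finally, in the bookkeeping use $2e(Y)=e(X)+b_3(X)$. The anti-isometric case then gives $\chi(\oo_X)+\chi(\oo_{X'})=\frac14(e(X)+b_3(X))$ and $K_X^3+K_{X'}^3=-12e(X)-18b_3(X)$. Miyaoka--Yau gives only $e(X)+b_3(X)\ge6\chi(\oo_X)$, with equality for $Y=\p^2$ (for example $\mathbb X$). The signs and the strict inequality in the statement as printed need this correction.
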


\begin{rem}
The conditions $\chi( \oo_X)=1$ and $b_2(X) \ge 10$ for the exceptional case come from the fact that, in this case, the intersection form of $Y$ is  isometric to $(1) \oplus q(-1)$, $q \ge 8$. For such quadratic forms, the action of the orthogonal group on primitive vectors with given norm and type is not transitive , but with finitely many orbit types (\cite[p.~337]{Wall62}).
\end{rem}

\begin{rem}
Notice that by Proposition \ref{prop:top2} (2), the condition $b_3(X)=b_3(X')$ can be replaced by $p_a(C_f)=p_a(C_{f'})$.
\end{rem}

\begin{proof}
We first show that these conditions are necessary.  First consider the case $b_2(Y) \ge 3$. By Lemma \ref{lem:H2Y}, an orientation preserving diffeomorphism $g \colon X' \to X$ induces an isomorphism between $H^2(Y,\z)$ and $H^2(Y',\z)$.

Let $x=4u-c_1$, let $\{ y_1, \cdots, y_r \}$ be a basis of $H^2(Y,\z)$, then by Proposition \ref{prop:h2}, $x, y_1, \cdots, y_r$ generate an index $4$ subgroup $V$ of $H^2(X,\z)$, containing $H^2(Y,\z)$. The cubic form 
$$H^2(X,\z) \times H^2(X,\z) \times H^2(X,\z) \to \z$$
is determined by the following data 
\begin{equation}\label{eq:cubic_sing}
x^3=8\langle 8c_2-c_1^2, [Y] \rangle, \ \ x^2y_i=0, \ \ xy_iy_j = -8\langle y_iy_j, [Y] \rangle.
\end{equation}
The same applies to $Y'$.

Let $g \colon X' \to X$ be an orientation-preserving diffeomorphism, $g^* \colon H^2(X,\z) \to H^2(X',\z)$ be the induced isomorphism, then $g^*(V)$ is a subgroup of $H^2(X',\z)$ of index $4$ containing $H^2(Y',\z)$, therefore $g^*(V)=V'$. Under the bases $\{x, y_1, \cdots, y_r\}$ and $\{x', y'_1, \cdots, y'_r\}$, the isomorphism $g^* \colon V \to V'$ is represented by the matrix 
$$\left ( \begin{array}{cc}
\pm 1 & b \\
0 & T 
\end{array} \right )$$
We have $g^*(x)=\pm x' + \sum b_iy_i$, then 
$$0= x^2y_i = (\pm x' + w)^2 g^*(y_i)=\pm 2x'w g^*(y_i)$$
implies $w g^*(t_i)=0$ for all $1 \le i \le r$. Since $\{ g^*(y_i), \cdots, g^*(y_r)\}$  is a basis of $H^2(Y',\z)$, and the intersection form is non-degenerate, we have $w=0$. Therefore $g^*(x)=\pm x'$, and $g^* \colon H^2(Y,\z) \to H^2(Y',\z)$ is an isometry or anti-isometry. The isomorphism $g_* \colon H_2(X',\z) \to H_2(X,\z)$ descends to an isomorphism $g_* \colon H_2(Y',\z) \to H_2(Y,\z)$. Since $g^*(p_1(X))=p_1(X')$, by Proposition \ref{prop:top2} (1), $g_*([C_f'])=[C_f]$. Hence $[C_f]$ and $[C_f']$ have the same type and divisibility.  

If $g^*(x)=x'$, then $g^* \colon H^2(Y,\z) \to H^2(Y',\z)$ is an isometry. Therefore $\mathrm{sign}(Y)=\mathrm{sign}(Y')$, hence $\chi(\oo_X) =\chi(\oo_{X'})$ (Lemma \ref{lem:sign}). The identity $g_*([C_f'])=[C_f]$ implies $[C_f]$ and $[C_f']$ have the same norm, and  $g^*(c_1)=c_1'$ (Proposition \ref{prop:top2} (3)). Therefore $g^*(u)=u'$, and hence $\langle u^3, [X] \rangle =  \langle u'^3, [X'] \rangle$. By equations (\ref{eq:c1c1Y}) and (\ref{eq:c1Y2}) we have  
$$\langle c_1c_1(Y), [Y]\rangle = \langle c_1'c_1(Y'), [Y']\rangle , \ \ \langle c_1(Y)^2, [Y]\rangle = \langle c_1(Y')^2, [Y'] \rangle .$$ 
Therefore by equation (\ref{eq:c1X3}) $\langle c_1(X)^3, [X] \rangle =\langle c_1(X')^3, [X'] \rangle $. Hence $K_X^3=K_{X'}^3$.

If $g^*(x)=-x'$, then $g^* \colon H^2(Y,\z) \to H^2(Y',\z)$ is an anti-isometry. Therefore $\mathrm{sign}(Y)=-\mathrm{sign}(Y')$. By Lemma \ref{lem:sign} we have $$4(\chi(\oo_X) + \chi(\oo_{X'}))=2e(Y)=e(X)-b_3(X).$$ 
The identity $g_*([C_f'])=[C_f]$ implies that $[C_f]$ and $[C_f']$ have opposite norm, and $g^*(c_1)=-c_1'$. Therefore $g^*(u)=-u'$, $\langle u^3, [X] \rangle = - \langle u'^3, [X'] \rangle$. Equations  (\ref{eq:c1c1Y}) and (\ref{eq:c1Y2}) imply
$$\langle c_1c_1(Y), [Y] \rangle + \langle c_1'c_1(Y'), [Y']\rangle = 2b_3(X),$$
$$\langle c_1(Y)^2, [Y] \rangle + \langle c_1(Y')^2, [Y'] \rangle = 3e(X)-3b_3(X)-2b_2(X)-2.$$
Therefore by equation (\ref{eq:c1X3}) we have
$$\langle c_1(X)^3, [X] \rangle + \langle c_1(X')^3, [X'] \rangle = 12e(X)-18b_3(X).$$

Finally, recall $u = c_1(X)-c_1(Y)$, therefore $w_2(X)-w_2(Y) \equiv u \pmod 2$. Since $g^*w_2(X)=w_2(X')$, $g^*w_2(Y)=w_2(Y')$, $g^*(u)=\pm u'$, we see that the two Mori fiber spaces have the same $w_2$-type. 

The proof for the case $b_2(Y) \le 2$ is by elementary but lengthy calculation. We give a sketch of the proof here. The reader may check the details in the Appendix.

\textbf{The case $b_2(X)=2$.} In this case we have $b_2(Y)=b_2(Y')=1$,  $\mathrm{sign}(Y)=\mathrm{sign}(Y')=1$ and $\chi(\oo_X)=\chi(\oo_X')=1$.  As in Proposition \ref{prop:h2}, let $u=c_1(X)-c_1(Y)$, $y \in H^2(Y,\z)$ be a generator, then $u$ and $y$ form a basis of $H^2(X,\z)$, with $2u^2=c_1uy + c_2y^2$, $c_1$, $c_2 \in \z$. Note that the Poincar\' e dual of  $[C_f]$ is $-c_1y$ (Proposition \ref{prop:top2}), where $C_f$ is an algebraic curve in $Y$. Therefore $c_1 \ne 0$. Since $Y$ is non-spin and $u$ is a primitive element, by definition, the Mori fiber space $f \colon X \to Y$ has $w_2$-type $III_1$. The same applies to $X'$. It remains to show $\langle c_1(X)^3, [X] \rangle =  \langle c_1(X')^3, [X'] \rangle$. By Proposition \ref{prop:top2} (4), it suffices to show that $c_1=c_1'$, $c_2=c_2'$.

By Proposition \ref{prop:top2} (2), and the fact $c_1(Y)=-3y$, we have $b_3(X)=c_1^2+3c_1$. Therefore if $X$ and $X'$ are diffeomorphic, then $c_1^2+3c_1=c_1'^2+3c_1'$, from which we get $c_1=c_1'$. 

By Proposition \ref{prop:top2} (5),  $p_1(X)=py^2 + 3u^2$, $p \in \z$. Let $\Psi \colon H^2(X,\mathbb Q) \to \mathbb Q$ be the homomorphism, $\Psi(a)= \langle a \cup p_1(X), [X] \rangle$. Since $\Psi(y) = \langle y \cup p_1(X), [X] \rangle = 3c_1 \ne 0$,  we may take $v = u + \beta y$, $\beta \in \mathbb Q$, as a basis of $\ker(\Psi)$. Then $v$ and $y$ form a basis of $H^2(X,\mathbb Q)$.
Let $g \colon X' \to X$ be an orientation preserving diffeomorphism. Since $\Psi(y)=3c_1=3c_1'=\Psi'(y')$, the induced isomorphism $g^* \colon H^2(X,\mathbb Q) \to H^2(X',\mathbb Q)$ satisfies 
\begin{equation}\label{eq:g}
g^*(v) = \lambda v', \ \ g^*(y)= y' + av', \ \lambda,  a \in \mathbb Q.
\end{equation}
Note that the transformation matrices between the bases $\{u,y\}$ and $\{v,y\}$, $\{u', y'\}$ and $\{v', y'\}$ are unimodular, and the matrix representing $g^* \colon H^2(X,\z) \to H^2(X',\z)$ under the bases $\{u,y\}$ and $\{u', y'\}$ is unimodular, therefore the matrix representing (\ref{eq:g}) is unimodular, which means $\lambda = \pm 1$. Moreover, we may assume $a \ne 0$, otherwise we are in the situation where $g^*(H^2(Y,\z) )= H^2(Y',\z)$, which has been handled. In the end, using the fact that $g^*$ is an isomorphism between the cubic forms one may deduce $c_2=c_2'$. 

\textbf{The case $b_2(X)=3$}. In this case $b_2(Y)=b_2(Y')=2$. The Hodge structure of $Y$ implies that $\chi(\oo_Y)=1$, and $\langle c_1(Y)^2, [Y] \rangle =8$. The intersection forms of $Y$ and $Y'$ are indefinite, hence are both hyperbolic forms. Let $\{ e_1, e_2\}$ and $\{e_1', e_2'\}$ be a symplectic basis of $H^2(Y, \mathbb Q)$ and $H^2(Y', \mathbb Q)$, respectively. Denote $x=4u-c_1$, $x'=4u'-c_1'$, then $\{ x, e_1, e_2 \}$ and $\{ x' , e_1', e_2' \}$ form a basis of $H^2(X, \mathbb Q)$ and $H^2(X', \mathbb Q)$, respectively. Let $g \colon X' \to X$ be an orientation-preserving diffeomorphism. If there exists some $c \in g^*(H^2(Y,\z)) \cap H^2(Y', \z)$ with $c^2 \ne 0$, then the statement of Proposition \ref{lem:H2Y} is still valid and the proof for the case $b_2(X) >3$ works. Therefore in the following we assume that $g^*$ maps the Lagrangian of $H^2(Y, \z)$ to the Lagrangian of $H^2(Y', \z)$. By choosing the symplectic bases appropriately, we may assume $g^*(e_1)=e_1'$. Using the fact that $g^*$ is a ring isomorphism, we get either $g^*(H^2(Y, \z))=H^2(Y',\z)$, which has been handled, or $g^*$ has the form  
$$g^*(e_1)=e_1', \ g^*(e_2) = \alpha x', \ g^*(x)=\beta e_2'$$
with $\alpha \beta =1$. Denote $c_1=a e_1 + b e_2$, $c_1'=a'e_1'+b'e_2'$. Then from the expression of the Pontrjagin class $p_1(X)=pe_1e_2+3u^2$ (Proposition \ref{prop:top2}) we have $b=b'=0$.  From the equation
$$1=\chi(\oo_Y)=\chi(\oo_X)=\frac{1}{48}(c_1(X)^3-c_1(X)p_1(X))=\frac{1}{48}(2p+48)$$
we get $p=0$. The same holds for $p_1(X')$.  This leads to a contradiction between the fact $g^*(p_1(X))=p_1(X')$ and the cohomology ring structure of $X$ and $X'$.

Now we turn to the proof of the sufficient part. Assume that $X$ and $X'$ have equal invariants stated in the theorem. Then $H^2(Y,\z)$ and $H^2(Y',\z)$ have the same rank. We first assume that the condition (\ref{eq:chiK31}) holds. Then $\mathrm{sign}(Y)=\mathrm{sign}(Y')$. Together with the assumption that they have the same $w_2$-type, one sees that  the intersection forms of $Y$ and $Y'$ are isometric. Since $[C_f]$ and $[C_{f'}]$ have the same divisibility, type and norm, so do $c_1$ and $c_1'$ (Proposition \ref{prop:top2} (3)). By a theorem of Wall (Theorem \ref{thm:quadr}), there is an isometry $\varphi \colon H^2(Y,\z) \to H^2(Y',\z)$ such that $\varphi(c_1)=c_1'$.
We extend $\varphi$ to an isomorphism $\varphi \colon H^2(X,\z) \to H^2(X',\z)$ by letting $\varphi(u)=u'$. Then $\varphi(x)=x'$. By Proposition \ref{prop:top2} (4), $\langle c_2, [Y] \rangle = \langle c_2', [Y'] \rangle$.
Therefore we see from equations (\ref{eq:cubic_sing}) that $\varphi$ preserves the cubic forms. Let $\varphi_* \colon H_2(X',\z) \to H_2(X,\z)$ be the dual of $\varphi'$. Then by Proposition \ref{prop:top2} (5), 
$$\langle \varphi_*Dp_1(X'), u \rangle = 3\langle u'^3,[X'] \rangle + \langle u'p', [X'] \rangle = 3\langle u'^3,[X'] \rangle + 2 \langle p', [Y'] \rangle.$$ 
Similarly, $\langle Dp_1(X), u \rangle =  3\langle u^3,[X] \rangle + 2 \langle p, [Y] \rangle$. Therefore $\langle \varphi_*Dp_1(X'), u \rangle  = \langle Dp_1(X), u \rangle$. 
For any $y \in H^2(Y,\z)$, 
$$\langle \varphi_*Dp_1(X'), y \rangle = \langle (p'+3u'^2) \varphi(y), [X'] \rangle= 3\langle \varphi(u)^2 \varphi(y) , [X'] \rangle , $$
whereas $$\langle Dp_1(X), y \rangle = 3\langle u^2y , [X] \rangle.$$
Therefore $\varphi_*Dp_1(X')=p_1(X)$. Since $w_2(X) \equiv u + w_2(Y) \pmod 2$, the mod $2$ reduction of $\varphi$ maps $w_2(X)$ to $w_2(X')$. By Theorem \ref{thm:6mfds} there is an orientation preserving diffeomorphism  $g \colon X' \to X$ such that $g^*=\varphi$.

The proof for the case where the condition (\ref{eq:chiK3}) holds is similar. 

The proof of the inequality $e(X)+b_3(X) > 6\chi(\oo_X)$ and the exclusion of the possibility described by condition (\ref{eq:chiK3}) is identical with the proof in Theorem \ref{thm:dim2smooth}, with the only modification that in this case $e(X)+b_3(X)=\sum_{\textnormal{even}}b_i(X)=2e(Y)$.
\end{proof}

\section{The case $\dim Y=1$}\label{sec:1}
In the first part of this section we prove the classification theorem for MFS $f \colon X \to Y$ with $\dim Y = 1$. In the second part we show in Theorem \ref{thm:mixed} that if a MFS $X$ with $\dim Y=2$ is diffeomorphic to another MFS $X'$ with $\dim Y'=1$ then $X$ is diffeomorphic to $\p^2 \times \p^1$ or the Fano $3$-fold $\mathbb X$ described in the introduction. This proves Theorem \ref{thm:main} (1).

\subsection{The topology of Mori fiber spaces}
Assume $f \colon X \to Y$ is a simply-connected $3$-dimensional MFS with $\dim Y=1$ and $\mathrm{tors}H^3(X,\z)=0$. By \cite[Proposition 2.10.2]{Koll95}, $Y$ is simply connected, hence $Y=\p^1$. Note that in this case  (by Lemma \ref{lem:b2})
$$b_2(X)=b_2(Y)+1=2, \ \ \chi(\mathcal O_X)=\chi( \mathcal O_{\p^1})=1.$$
We begin with the  analysis of the cohomology group of $X$.
\begin{proposition}\label{prop:y1h2}
 $f^* \colon H^2(Y,\z) \to H^2(X,\z)$ is injective.  Except for the case where the general fiber of $f$ is a del Pezzo surface diffeomorphic to $\p^2 \# 3\overline{\p^2}$,  $f^*H^2(Y,\z)$ is a direct summand of $H^2(X, \z)$.
\end{proposition}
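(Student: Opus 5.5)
Injectivity is the easy part. Because $Y=\p^1$, the group $H^2(Y,\z)$ is generated by $\sigma_Y$, and $f^*\sigma_Y$ is the Poincar\'e dual of the class of a fibre $[F]$ (Lemma \ref{lem:poincare}). Take an ample class $h\in H^2(X,\z)$. Then $\langle h^2 \cup f^*\sigma_Y,[X]\rangle = h^2\cdot F>0$, since the restriction of $h$ to the surface $F$ is ample. So $f^*\sigma_Y$ has infinite order and $f^*$ is injective.

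For the direct summand statement I would compute the divisibility $d$ of $f^*\sigma_Y$ in $H^2(X,\z)\cong\z^2$; it is a direct summand exactly when $d=1$. Unimodularity of the pairing $H^2(X,\z)\times H^4(X,\z)\to\z$ gives
\[
d=\gcd\{\langle f^*\sigma_Y\cup \beta,[X]\rangle : \beta\in H^4(X,\z)\} = \gcd\{\deg(\gamma|_F) : \gamma\in H_2(X,\z)\},
\]
where $\deg(\gamma|_F)$ means the intersection number $[F]\cdot\gamma$. So it suffices to find integral classes in $X$ whose intersection numbers with a fibre have gcd $1$. Equivalently, by the Leray/Clemens description of $H^2(X)$, $d$ equals the index of the image of the restriction $H^2(X,\z)\to H^2(F,\z)^{\mathrm{mon}}$ inside the lattice it spans. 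I would argue using curves and divisors rather than the spectral sequence.

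I would proceed case by case on the smooth fibre (Theorem \ref{thm:mori}(2)), using the anticanonical class. We have $K_X|_F=K_F$, so $c_1(X)$ has intersection number $K_F^2=9-k$ with a line-type curve class. More useful is to produce curves $C\subset X$ with $C\cdot F$ small. Over $Y=\p^1$, Tsen's theorem gives a rational section whenever the generic fibre is a del Pezzo surface over $\mathbb C(t)$ with a rational point; this holds in general (Graber--Harris--Starr, or Manin's results for $C_1$ fields). A section $\Sigma$ satisfies $\Sigma\cdot F=1$, so $d=1$.

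This argument seems to give $d=1$ in every case, which contradicts the stated exception. The error is that $\gcd$ over $H_2$ detects divisibility of $f^*\sigma_Y$ only when it is computed against $H^4$, and $d$ is really the divisibility in $H^2$. Dually, $d$ is the gcd of $\langle \alpha, [\text{curve in fibre}]\rangle$ over $\alpha\in H^2(X)$. Let me redo this: $f^*\sigma_Y=d\,a$ with $a$ primitive. For any $\beta$, $\langle a\beta,[X]\rangle = F\cdot\beta/d$, so a section forces $d=1$. Hence a section yields the summand property, and the exception must arise from a failure of that reasoning, namely torsion in singular fibres (as the introduction indicates, $d$ is the product of torsion orders in $H^2(F_s)$).

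So the honest plan is as follows. Write $H^2(X)/f^*H^2(Y)$ via the Leray spectral sequence: $0\to H^2(Y)\to H^2(X)\to H^0(Y,R^2f_*\z)$, with $H^1(Y,R^1f_*\z)$ as the possible obstruction. Then show that the cokernel is torsion free, except when some singular fibre $F_s$ has $\mathrm{tors}\,H^2(F_s)\ne0$. The main obstacle is classifying which irreducible reduced singular del Pezzo fibres can carry torsion in $H^2$ (or $H_1$). Using the list of degenerations of del Pezzo fibres in Mori's classification, I expect torsion only in degree $6$, where fibres such as quotient-singular degenerations have $H_1=\z/2$ or $\z/3$. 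This is precisely the $\p^2\#3\overline{\p^2}$ case. For all other degrees, the fibres have torsion-free local cohomology, which gives $d=1$.
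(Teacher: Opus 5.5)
You had a complete proof and then discarded it for a mistaken reason. The plan you finally commit to has a genuine gap.

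\textbf{The section argument is a correct proof.} Your injectivity argument is correct: pairing $f^*\sigma_Y$ with $h^2$ for an ample $h$ is a self-contained substitute for the paper's appeal to Voisin. Your section argument settles the direct-summand claim:
\begin{itemize}
\item The generic fibre has a $\mathbb C(t)$-point (Graber--Harris--Starr, or classically via Tsen).
\item The resulting rational section extends to a section $\Sigma$, since $Y$ is a smooth curve and $f$ is proper.
\item Then $f_*[\Sigma]=[Y]$ gives $\langle f^*\sigma_Y,[\Sigma]\rangle=1$, so $f^*\sigma_Y$ is primitive in the free group $H^2(X,\z)$ and spans a direct summand.
\end{itemize}
Even GHS can be avoided. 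All fibres are reduced, so each singular fibre admits a local holomorphic section through one of its smooth points. Because the smooth fibre is simply connected, these can be joined to a section over the complement of small discs around the critical values, giving a continuous section $Y\to X$.

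This does not contradict the statement. The proposition merely exempts the $\p^2\#3\overline{\p^2}$ case and asserts nothing about it. You have simply proved more: $d(X,Y)=1$ always, so the $d$-dependent term in Theorem \ref{thm:k1} always vanishes. Compared with the paper, this route bypasses the entire singular-fibre analysis, which is exactly where the paper loses the degree-$6$ case.

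Your diagnosis of an ``error'' is the actual mistake. Since $H_1(X)=0$, we have $H^2(X,\z)=\mathrm{Hom}(H_2(X,\z),\z)$. So the divisibility of $f^*\sigma_Y$ is precisely the gcd of its values on $H_2(X,\z)$, as you first wrote. The gcd of values of classes on curves inside a fibre is an unrelated quantity.

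\textbf{The fallback plan has a gap.} It follows the paper's outline: the Leray sequence with cokernel $H^0(Y,R^2f_*\z)$, whose torsion is $\bigoplus_s\mathrm{tors}\,H^2(F_s,\z)$. But it omits both substantive steps.
\begin{itemize}
\item You must kill $H^1(Y,R^1f_*\z)$. The paper shows $R^1f_*\z=0$: we have $H^0(Y,R^1f_*\z)=E_\infty^{0,1}=0$ (by injectivity of $f^*$ and $H^1(X)=0$), while $R^1f_*\z$ is supported on finitely many points.
\item The torsion-freeness of $H_1(F_s,\z)$ outside degree $6$ is only ``expected'', and Mori's theorem gives no list of degenerations. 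The paper gets it from Fujita (isolated singularities in degree $\le 4$, normality in degrees $5$ and $\ge 7$), Gorensteinness of the complete-intersection fibre, Hidaka--Watanabe (rational, or an elliptic cone), and surjectivity of $\pi_1$ from a resolution.
\end{itemize}

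Your assertion that degree-$6$ quotient-singular fibres have $H_1=\z/2$ or $\z/3$ is wrong. The local fundamental group of a quotient singularity is not the global $H_1$, and a normal surface with rational resolution is simply connected. In fact, via the exact sequence $0\to H^2(Y,\z)\to H^2(X,\z)\to H^0(Y,R^2f_*\z)\to 0$, your own section argument shows that no fibre $F_s$ has torsion in $H^2(F_s,\z)$.
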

\begin{proof}
Consider the Leray spectral sequence 
$$E^{p,q}_2=H^p(Y,R^qf_*(\z)) \Rightarrow H^{p+q}(X,\z).$$
By Theorem \ref{thm:mori} (2), the fiber of $f$ is irreducible and reduced, in particular it is connected. Also, by the proper base change theorem (see e.g. \cite[Theorem 2.3.26]{Dim04}), the stalk $f_*(\z)_p =H^0(f^{-1}(p),\z) = \z$. Hence $f_*(\z) = \z$, since $Y\cong\mathbb{P}^1$.  
It's known that $f^* \colon H^2(Y, \q) \to H^2(X,\q)$ is injective (\cite[Lemma 7.28]{Voi02}).  Also, since $H^2(Y,\mathbb{Z})$ and $H^2(X, \mathbb{Z})$ are torsion free, we know that $f^*\colon  H^2(Y, \z) \to H^2(X,\z)$ is injective.  This implies that the differential $d_2 \colon E^{0,1}_2 \to E^{2,0}_2$ is trivial. Therefore $E^{0,1}_2 =H^0(Y, R^1f_*(\z))= E^{0,1}_{\infty}=0$ (the last equality is due to the simply-connectedness of $X$). The stalk of the sheaf $(R^1f_*(\z))_p =H^1(f^{-1}(p),\z)$ is trivial when $p$ is a regular value of $f$. Therefore the sheaf $R^1f_*(\z)$ is supported on the set of discriminant points of $f$, which is a finite set. Then the vanishing of $H^0(Y,R^1f_*(\z))$ implies that $R^1f_*(\z)=0$.

Therefore the spectral sequence reduces to a short exact sequence
$$0 \to H^2(Y,\z) \stackrel{f^*}{\rightarrow} H^2(X,\z) \to H^0(Y, R^2f_*(\z)) \to 0.$$
The stalk of $R^2f_*(\z)$ at $p$ is isomorphic to $H^2(f^{-1}(p),\z)$. When $p$ is a regular point of $f$, this group is torsion free. When $p$ is a discriminant point of $f$,  the torsion subgroup of $H^2(f^{-1}(p),\z)$ is isomorphic to the torsion subgroup of $H_1(f^{-1}(p),\z)$. Therefore the order of the torsion subgroup of $H^0(Y,R^2f_*(\z))$ equals to the product of the order of torsion subgroups of $H_1(F_s,\z)$ of all singular fibers $F_s$. 

In the following we show that $H_1(F_s,\z)$ is torsion free, except for the case that $K_F^2=K_{F_s}^2=6$, where $F$ is the general fibre of $f$. In fact, by \cite[Section 4, (4.1)-(4.4))]{Fuj90}, we know that the singular fibres $F_s$
have isolated singularities when $1\leq K_F^2=K_{F_s}^2\leq 4$. Note that since $X, Y$ are smooth, $F_s$ is a complete intersection scheme. Since $F_s$ is reduced and irreducible by Theorem \ref{thm:mori} (2), $F_s$ is normal by \cite[Proposition 8.23]{Har77}. Also, by \cite[Theorem 3.1]{Fuj90}, $F_s$ is again normal if $K_F^2=K_{F_s}^2=5$ or $K_F^2=K_{F_s}^2>6$. Note that $F_s$ is  Gorenstein since $F_s$ is a complete intersection variety (see e.g. \cite[Corollary 21.19]{Eis95}). Now in the case that $K_F^2=K_{F_s}^2\neq6$, by \cite[Theorem 2.2]{HiWa81}, we know that $F_s$ is either rational or obtained by contracting the section of a $\mathbb{P}^1$-bundle over an elliptic curve. When $F_s$ is rational, so is its desingularization $\widetilde{F}_s$. Hence the fundamental group $\widetilde{F}_s$ is trivial.  Moreover, by \cite[Theorem 2.1]{ADH16}, $\pi_1(\widetilde{F}_s)\to \pi_1(F_s)$ is surjective. Hence $F_s$ is simply connected.
In the remaining case, $F_s$ is obtained by contracting the section of a $\mathbb{P}^1$-bundle over an elliptic curve. Denote the $\mathbb{P}^1$-bundle over the elliptic curve $C$ by $\widehat{F}_s$, we then have a long exact sequence of homotopy groups $$\{1\}=\pi_1(\mathbb{P}^1)\to \pi_1(\widehat{F}_s)\to\pi_1(C)\to \{1\}.$$ Hence  $\pi_1(\widehat{F}_s)=\pi_1(C)$.  After contracting the section of $\widehat{F}_s\to C$, which is connected, all elements of $\pi_1(\widehat{F}_s)=\pi_1(C)$ are killed. Thus $F_s$ is again simply connected. In summary, we get $F_s$ is simply connected when $K_F^2=K_{F_s}^2\neq6$.
\end{proof}

%{\color{red} In the case that $K_{F_s}^2=6$, it is likely that $H_1(F_s,\z)$ has torsion (see e.g., \cite[Section 1.3]{Reid94} for some examples).} 
Next we consider the characteristic classes. Let $\sigma_Y \in H^2(Y,\z)$ be the fundamental class, then $f^*(\sigma_Y)=d(X,Y) \cdot y$ with $y$ a primitive class and $d(X,Y) > 0$. By the above proposition, $d(X,Y)=1$ when the smooth fiber of $f$ is not diffeomorphic to $\p^2 \# 3 \overline{\p^2}$. Choose $x \in H^2(X,\z)$ such that $x$ and $y$ form a basis of $H^2(X,\z)$, and $c_1(X) = kx + ly$ with $k \ge 0$. Note that the integer $k$ is independent of the choice of $x$.

\begin{lem}\label{lem:fiberk} 
Let $F$ be a smooth fibre of $f$. Then
$$k = \left \{ \begin{array}{cl}
1, & \text{if $F$ is diffeomorphic to $\p^2 \# r \overline{\p^2}$} \\
2, & \text{if $F$ is difffeomorphic to $\p^1 \times \p^1$} \\
3, & \text{if $F$ is diffeomorphic to $\p^2$.}
\end{array} \right.$$
\end{lem}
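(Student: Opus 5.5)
The plan is to restrict $c_1(X)$ to a smooth fibre and compare divisibilities. Let $i\colon F\to X$ be the inclusion of a smooth fibre. Since the normal bundle of $F$ is trivial ($F=f^{-1}(p)$ for a regular value $p$), we have $i^*c_1(X)=c_1(F)$. Moreover, $i^*y$ is a multiple of $i^*f^*(\sigma_Y)$ over $\q$, and $i^*f^*\sigma_Y=0$ because $f\circ i$ is constant. As $H^2(F,\z)$ is torsion free, this gives $i^*y=0$. Hence
$$c_1(F)=i^*c_1(X)=k\, i^*x .$$
So $k$ divides $c_1(F)$ in $H^2(F,\z)$, and $k$ is at most the divisibility of $c_1(F)$.

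For the reverse inequality I will evaluate on curves. The divisibility of $c_1(F)=-K_F$ is $3$ for $\p^2$, $2$ for $\p^1\times\p^1$ (since $-K=(2,2)$), and $1$ for $\p^2\# r\overline{\p^2}$ (a $(-1)$-curve $E$ has $-K_F\cdot E=1$). So I need to show that $i^*x$ is primitive, or more precisely that $i^*\colon H^2(X,\z)\to H^2(F,\z)$ sends $x$ to an element whose divisibility does not exceed $3/k$, $2/k$ or $1/k$ respectively.

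I would argue by Poincaré duality. Let $[F]\in H_4(X)$ be dual to $f^*\sigma_Y=d\,y$. Then the cubic form restricted to $F$ computes the intersection form there, and in particular
$$\langle x^2\cdot d\,y,[X]\rangle=(i^*x)^2 .$$
Writing $i^*x=m\,a$ with $a$ primitive, $c_1(F)=km\,a$, and the divisibility of $c_1(F)$ gives $km\in\{1,2,3\}$ in the respective cases. To pin down $k$ itself, I would instead use $i_*\colon H_2(F)\to H_2(X)$: the pairing $\langle x, i_*\beta\rangle=\langle i^*x,\beta\rangle$, so I need a class $\beta\in H_2(F)$ with $\langle i^*x,\beta\rangle=1$. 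Equivalently, $i^*x$ must be primitive, i.e. $m=1$.

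To prove $m=1$, I use that the relative Picard number is one: by Theorem \ref{thm:mori}, every curve in a fibre is numerically proportional. Moreover, the image of $i^*\colon H^2(X,\q)\to H^2(F,\q)$ is one-dimensional, spanned by $c_1(F)$; since $\rho(X)=2$ and $y$ restricts to zero, the image is $\q\cdot i^*x$. Now pick a line $\ell$ in the fibre: a line in $\p^2$, a ruling in $\p^1\times\p^1$, or a $(-1)$-curve. Then $K_X\cdot\ell=K_F\cdot\ell$ equals $-3$, $-2$ or $-1$, so $\langle c_1(X),[\ell]\rangle=3,2,1$. On the other hand, this equals $k\langle x,[\ell]\rangle$.

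The main obstacle is showing $\langle x,[\ell]\rangle=\pm1$ for some such curve, i.e. that $k$ is not smaller than expected. For this I would use the unimodular pairing $H^2(X)\times H_2(X)\to\z$. Since $y$ pairs to zero with every fibre curve, I need a class $\gamma\in H_2(X)$ supported in a fibre with $\langle x,\gamma\rangle=1$. I would obtain it by showing that $H_2(F)\to H_2(X)$ surjects onto the annihilator of $y$ modulo torsion; then unimodularity, together with $\langle y,\cdot\rangle=0$, forces $\langle x,\gamma\rangle=\pm1$ for some $\gamma$ from the fibre. To get this surjectivity, I would argue via the Leray/Wang-type decomposition over $\p^1$. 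Away from the singular fibres, $X$ is a fibre bundle over a disc-complement, and gluing in neighbourhoods of the singular fibres kills no fibre classes in $H_2$ beyond those forced. Alternatively, I would dualize the exact sequence $0\to H^2(Y)\to H^2(X)\to H^0(Y,R^2f_*\z)\to 0$ from Proposition \ref{prop:y1h2}, whose last group embeds in $H^2(F)$ as monodromy invariants. This identifies the image of $x$ in $H^2(F)$ with a generator of $H^0(Y,R^2f_*\z)\cong\z$, which is primitive in $H^2(F)$ as the invariant sublattice is saturated, except possibly for torsion arising from singular fibres. Thus $m=1$ and $k$ equals the divisibility of $c_1(F)$, giving $1,2,3$.
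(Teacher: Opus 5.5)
Your reduction is right. From $i^*y=0$ and $c_1(F)=k\,i^*x$, the integer $k$ divides the divisibility of $c_1(F)$. This immediately gives $k=1$ for $F\cong\p^2\#r\overline{\p^2}$, since a $(-1)$-curve $E$ has $c_1(F)\cdot E=1$; this is quicker than the paper's signature-plus-spin argument. It also reduces the other two cases to showing that $i^*x$ is primitive.

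For $F\cong\p^2$ your sheaf argument also goes through, because by Theorem \ref{thm:mori}(2) $X$ is then a $\p^2$-bundle. So $R^2f_*\z$ is a local system on $\p^1$, hence constant, and $H^2(X,\z)\to H^0(Y,R^2f_*\z)=H^2(F,\z)$ is onto. This gives $k=3$; the paper gets the same from Leray--Hirsch.

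The gap is the case $F\cong\p^1\times\p^1$, where singular fibres (quadric cones) can occur. Let $\Delta\subset Y$ be the finite set of critical values and $D_s$ a small disc around $s\in\Delta$. You identify the image of $H^0(Y,R^2f_*\z)$ in $H^2(F,\z)$ with the monodromy invariants $H^2(F,\z)^{\pi_1(Y\setminus\Delta)}$ and then invoke saturation of the latter. But an invariant section over $Y\setminus\Delta$ extends to a global section only if, near each $s\in\Delta$, it lies in the image of the specialization map $H^2(F_s,\z)\cong H^2(f^{-1}(D_s),\z)\to H^2(F,\z)$. Integrally, this image is in general only known to have finite index in the local invariants, since the local invariant cycle theorem is a statement over $\q$. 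So the image of the global sections need not be saturated, and saturation of the invariant lattice proves nothing here.

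Concretely, the invariant lattice is $\z\cdot(1,1)$, and the possibility you must exclude, $\mathrm{Im}\,i^*=\z\cdot(2,2)$, is exactly $k=1$. Your other suggestion, that gluing in the singular fibres kills no fibre classes beyond those forced, just restates what is to be proved.

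The missing input is geometric. By Theorem \ref{thm:mori}(2), $X\subset\p(\xi)$ is a quadric bundle. The class $j^*u$, with $u=c_1(L^*)$ the relative hyperplane class, restricts on $F$ to the hyperplane class $(1,1)$ of the quadric, which is primitive. Since $\mathrm{Im}\,i^*=\z\,i^*x$, this forces $i^*x=\pm(1,1)$, and then $c_1(F)=(2,2)=k\,i^*x$ gives $k=2$. This is essentially how the paper argues. A direct computation of the specialization map from a quadric cone would also work, but some such computation is required.
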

\begin{proof}
Let $i \colon F \to X$ be the inclusion map, then $i^*y=0$. The normal bundle of $F$ in $X$ is trivial, therefore $c_1(F)=i^*c_1(X)=ki^*x$. If the smooth fiber of $f$ is $\p^2$, then $X=\p(E)$ is a projective bundle over $Y$ (Theorem \ref{thm:mori}). By Leray-Hirsch theorem, we may take $x=c_1(L)$, where $L$ is the tautological line bundle over $X$. Therefore $i^*(x)$ is the first Chern class of the tautological line bundle over $\p^2$. This shows $k=3$. 

If $F= \p^2 \# r \overline{\p^2}$, by Hirzebruch Signature Theorem, we have 
$$3(1-r)=3\mathrm{sign}(F)= \langle p_1(F), [F] \rangle =\langle c_1(F)^2 -2c_2(F), [F] \rangle.$$
Since $\langle c_2(F), [F] \rangle = \chi(F)=r+3$, we have $k^2 \langle i^*x^2, [F] \rangle = 9-r $, which is an integer between $1$ and $6$ for $3 \le r \le 8$.  This implies $k=1$ or $2$. The fact that $\p \# r\overline{\p^2}$ is non-spin rules out the possibility of $k=2$.

If $F=\p^1 \times \p^1$, then $X$ is a quadric hypersurface in a $\p^3$-bundle over $Y=\p^1$. Let $\xi$ be a rank $4$ holomorphic vector bundle over $Y$, $\pi \colon \p(\xi) \to Y$ be the projective bundle,  $j \colon X \to \p(\xi)$ be the inclusion map, $f = \pi \circ j \colon X \to Y$ be a quadric hypersurface bundle with fiber $F=f^{-1}(y)$ a quadric surface in $\pi^{-1}(y)=\p^3$. 

Let $L^*$ be the dual of the tautological line bundle over $\p(\xi)$, $u=c_1(L^*)$, $x=j^*u$, then the restriction of $x$ to a fiber $F$ equals to the restriction of the hyperplane class in $\p^3$ to the quadric surface $F$, which is a primitive class. Also note that the direct summand $H^2(Y,\z)$ equals to $\ker\{ i^* \colon H^2(X,\z) \to H^2(F,\z)\}$. Therefore $\{x , y\}$ is a basis of $H^2(X,\z)$. Now by Hirzebruch Signature Theorem, we have
$$0=3\mathrm{sign}(F)= \langle p_1(F), [F] \rangle =\langle c_1(F)^2 -2c_2(F), [F] \rangle.$$
On the other hand, $\langle c_2(F), [F] \rangle = \chi(F)=4$, therefore $k^2 \langle i^*x^2, [F] \rangle = 8$. This implies $k=1$ or $2$. But $i^*x$ is a primitive class, and $F$ is spin, therefore $k=2$.
\end{proof}

\begin{lem}\label{lem:k}
Let $F$ be a smooth fiber of $f$, $\{x, y\}$ be the basis of $H^2(X,\z)$ chosen as above. Then 
\begin{equation}\label{eq:x2y}
\langle x^2y, [X] \rangle = \frac{\langle c_1(F)^2, [F] \rangle }{k^2 d(X,Y)} = \left \{ \begin{array}{cl}
1, & k=3 \\
2, & k=2 \\
(9-r)/d(X,Y), & k=1
\end{array} \right.
\end{equation}
\end{lem}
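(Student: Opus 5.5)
The idea is to compute $\langle x^2 y,[X]\rangle$ by relating $y$ to the Poincar\'e dual of a smooth fibre, and then evaluate on $F$.

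First, by Lemma \ref{lem:poincare} (as already used in the proof of Proposition \ref{prop:h2}), $f^*(\sigma_Y)$ is the Poincar\'e dual of the fibre class $[F]=i_*[F]$. By definition $f^*(\sigma_Y)=d(X,Y)\, y$, so
$$d(X,Y)\,\langle x^2y,[X]\rangle=\langle x^2\cup f^*\sigma_Y,[X]\rangle=\langle i^*x^2,[F]\rangle .$$

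Next, Lemma \ref{lem:fiberk} gives $c_1(F)=i^*c_1(X)=k\,i^*x$, since the normal bundle of $F$ is trivial and $i^*y=0$. Hence
$$\langle i^*x^2,[F]\rangle=\langle c_1(F)^2,[F]\rangle/k^2,$$
which is the first equality of (\ref{eq:x2y}).

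The values of $\langle c_1(F)^2,[F]\rangle=K_F^2$ are standard:
\begin{itemize}
\item $9$ for $\p^2$, where $k=3$;
\item $8$ for $\p^1\times\p^1$, where $k=2$;
\item $9-r$ for $\p^2\#r\overline{\p^2}$, where $k=1$.
\end{itemize}
These also appear in the signature computations inside the proof of Lemma \ref{lem:fiberk}. This gives $9/9=1$, $8/4=2$ and $(9-r)/d(X,Y)$ respectively.

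It remains to note that $d(X,Y)=1$ whenever $k\in\{2,3\}$. By Proposition \ref{prop:y1h2}, $f^*H^2(Y,\z)$ is a direct summand unless the fibre is $\p^2\#3\overline{\p^2}$, which has $k=1$. Being a direct summand forces $f^*\sigma_Y$ to be primitive, so $d(X,Y)=1$.

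The only step needing care is the identification of $f^*\sigma_Y$ with the Poincar\'e dual of the fibre. This is supplied by Lemma \ref{lem:poincare}, using that $\sigma_Y$ is dual to a point and that $f$ is a submersion near a regular fibre. The remaining step is purely arithmetic.
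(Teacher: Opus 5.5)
Your proposal is correct and follows the paper's proof. You identify $f^*\sigma_Y=d(X,Y)\,y$ with the Poincar\'e dual of $[F]$ via Lemma \ref{lem:poincare}, restrict $c_1(X)=kx+ly$ to $F$ using $i^*y=0$, and read off $\langle c_1(F)^2,[F]\rangle$; your remark that $d(X,Y)=1$ for $k=2,3$ by Proposition \ref{prop:y1h2} makes explicit a step the paper leaves implicit.
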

\begin{proof}
Note that the Poincar\'e dual of $\sigma_Y$ is $[F]$ (Lemma \ref{lem:poincare}), therefore
$$\langle x^2y, [X]\rangle = \langle i^*x^2, [F] \rangle/d(X,Y) = \frac{\langle i^*c_1(X)^2, [F] \rangle}{k^2 d(X,Y)} = \frac{\langle c_1(F)^2, [F] \rangle}{k^2d(X,Y)}.$$ 
Then equation (\ref{eq:x2y}) follows from Lemma \ref{lem:fiberk}.
\end{proof}

\begin{lem}\label{lem:indep}
The cohomology classes $x^2$ and $xy$ are linearly independent in $H^4(X,\z)$.
\end{lem}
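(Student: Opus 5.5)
We use Poincaré duality on the 6-manifold $X$. Since $b_2(X)=2$, the pairing $H^2(X,\z)\times H^4(X,\z)\to\z$ is unimodular and $H^4(X,\z)$ has rank $2$. So $x^2$ and $xy$ are linearly independent (over $\q$, hence in the free group $H^4(X,\z)$) exactly when the $2\times 2$ matrix of pairings of $x^2, xy$ against the basis $\{x,y\}$ of $H^2(X,\z)$ has nonzero determinant. The entries of this matrix are
$$\begin{pmatrix} \langle x^3,[X]\rangle & \langle x^2y,[X]\rangle \\ \langle x^2y,[X]\rangle & \langle xy^2,[X]\rangle\end{pmatrix}.$$

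The key input is $y^2=0$. Indeed $d(X,Y)\,y=f^*(\sigma_Y)$ and $\sigma_Y^2=0$ in $H^4(Y,\z)=0$, so $d(X,Y)^2y^2=0$. Since $H^4(X,\z)$ is torsion free, this gives $y^2=0$, hence $\langle xy^2,[X]\rangle=0$. The determinant is then $-\langle x^2y,[X]\rangle^2$.

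By Lemma \ref{lem:k}, $\langle x^2y,[X]\rangle$ equals $1$, $2$, or $(9-r)/d(X,Y)$ with $3\le r\le 8$. In every case it is a nonzero number, since $\langle c_1(F)^2,[F]\rangle=K_F^2>0$ for a del Pezzo fiber. So the determinant is nonzero.

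Now suppose $a x^2+b\,xy=0$ with $a,b\in\z$. Pairing with $y$ gives $a\langle x^2y,[X]\rangle=0$, so $a=0$. Pairing with $x$ then gives $b\langle x^2y,[X]\rangle=0$, so $b=0$. This proves independence.

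The only point that needs care is justifying $y^2=0$ integrally, which uses torsion-freeness of $H^4(X,\z)$. Everything else is immediate from Lemma \ref{lem:k}.
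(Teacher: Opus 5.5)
Your proof is correct and is essentially the paper's argument. The paper kills the $xy$ term by restricting to a smooth fibre $F$, where $i^*y=0$, and uses $\langle i^*x^2,[F]\rangle=d(X,Y)\langle x^2y,[X]\rangle\neq 0$; by Lemma \ref{lem:poincare} this is the same as your pairing with $y$ using $y^2=0$, and both arguments then get $b=0$ from $\langle x^2y,[X]\rangle\neq 0$ (Lemma \ref{lem:k}).
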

\begin{proof}
Assume $a x^2 + b xy =0$ for some integers $a$, $b$. Restricting to the fiber $F$ we have $ai^*x^2=0$. But $\langle i^*x^2, [F] \rangle = d(X,Y) \langle x^2y, [X] \rangle$ is non-zero by equation (\ref{eq:x2y}), thus $a=0$ and $bxy=0$. By equation (\ref{eq:x2y}) $xy \ne 0$, therefore $b=0$. 
\end{proof}

\begin{proposition}\label{lem:gh2}
Let $g \colon X' \to X$ be an orientation-preserving diffeomorphism. Let $\{x, y\}$ and $\{x',y'\}$ be the basis of $H^2(X,\z)$ and $H^2(X',\z)$ chosen as above, respectively. Then $g^*(x)=\pm x' + ny$, $g^*(y)=y'$ for some $n \in \mathbb Z$. 
\end{proposition}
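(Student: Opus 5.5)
The key is to characterize the line $\z\langle y\rangle$ intrinsically from the cohomology ring. Once $g^*(y)=\pm y'$ is established, the sign and the form of $g^*(x)$ follow from unimodularity and from the invariance of the quantity in Lemma \ref{lem:k}.

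\textbf{Step 1: $y$ is the unique primitive square-zero class, up to sign.} Since $Y=\p^1$, we have $f^*(\sigma_Y)^2=f^*(\sigma_Y^2)=0$, so $y^2=0$ in $H^4(X,\z)$. Write a general class as $a=\alpha x+\beta y$. Then
$$a^2=\alpha^2x^2+2\alpha\beta\, xy.$$
By Lemma \ref{lem:indep}, $x^2$ and $xy$ are linearly independent, so $a^2=0$ forces $\alpha^2=0$ and $\alpha\beta=0$, that is, $\alpha=0$. Hence the classes with vanishing square are exactly the multiples of $y$.

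\textbf{Step 2: $g^*(y)=y'$.} The induced map $g^*\colon H^2(X,\z)\to H^2(X',\z)$ is a ring isomorphism. It therefore sends the primitive square-zero classes of $X$ to those of $X'$, and Step 1 gives $g^*(y)=\epsilon y'$ with $\epsilon=\pm1$.

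To fix the sign, use positivity. Since $g$ preserves orientation, $\langle x^2y,[X]\rangle=\langle g^*(x)^2g^*(y),[X']\rangle$. Write $g^*(x)=\delta x'+m y'$; because $g^*$ is unimodular and the $y'$-coefficient is already determined, $\delta=\pm1$. Then
$$g^*(x)^2g^*(y)=\epsilon\,\delta^2\,x'^2y'=\epsilon\, x'^2y',$$
since $y'^2=0$ kills all other terms. By Lemma \ref{lem:k}, both $\langle x^2y,[X]\rangle$ and $\langle x'^2y',[X']\rangle$ are positive: they equal $\langle c_1(F)^2,[F]\rangle/(k^2d)$, and $K_F^2>0$ for a del Pezzo surface. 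This forces $\epsilon=1$, so $g^*(y)=y'$.

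\textbf{Step 3: the form of $g^*(x)$.} Step 2 already gives $g^*(x)=\pm x'+m y'$. This is the claimed formula once we read the statement's ``$ny$'' as $n y'$, which is the only meaningful reading since $g^*(x)$ lies in $H^2(X',\z)$.

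The main subtlety is the positivity in Step 2. It relies on $y$ being a positive multiple of $f^*\sigma_Y$ (by definition $d(X,Y)>0$) and on $\sigma_Y$ being Poincar\'e dual to a complex fibre, as in Lemma \ref{lem:poincare}. Both hold for $X$ and $X'$ with the orientations coming from their complex structures.
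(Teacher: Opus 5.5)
Your proof is correct and is essentially the paper's argument. The paper writes $g^*(y)=b_1x'+b_2y'$, squares it and applies Lemma \ref{lem:indep} to get $b_1=0$, then uses unimodularity and the positivity of $\langle x^2y,[X]\rangle$ from equation (\ref{eq:x2y}) to force $b_2=1$; these are your Steps 1 and 2, and you are also right that the ``$ny$'' in the statement should read $ny'$.
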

\begin{proof}
Assume $g^*(x)=a_1x' + a_2y'$, $g^*(y)=b_1x' + b_2y'$. Since $y^2=0$, $y'^2=0$, we have 
$$0=g^*(y^2)=b_1^2x'^2+2b_1b_2x'y'.$$
By Lemma \ref{lem:indep} one gets $b_1=0$. Therefore $a_1=\pm 1$, $b_2=\pm 1$. If $b_2=-1$, then $\langle x^2y, [X]\rangle=-\langle x'^2y', [X']\rangle$, which leads to a contradiction to equation (\ref{eq:x2y}). This shows $b_2=1$.
\end{proof}

\begin{lem}\label{lem:s}
Let $\{x, y\}$ be a basis of $H^2(X,\z)$ as above. By Lemma \ref{lem:indep}, we may write the first Pontrjagin class of $X$ as $p_1(X)=sx^2 + txy$, with $s, t \in \mathbb Q$. Then 
$$s=\frac{3k^2\mathrm{sign}(F)}{\langle c_1(F)^2, [F] \rangle}.$$
\end{lem}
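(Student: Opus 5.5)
Restricting $p_1(X)$ to a smooth fibre $F$ and evaluating on $[F]$ should do it; the $t$-term dies on restriction, leaving an equation for $s$.

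Let $i\colon F\to X$ be the inclusion of a smooth fibre. As in the proof of Lemma \ref{lem:fiberk}, the normal bundle of $F$ in $X$ is trivial, since it is the pullback of $TY$ at a point. Hence $i^*TX\cong TF\oplus\underline{\mathbb C}$, and stability of Pontrjagin classes gives $i^*p_1(X)=p_1(F)$. By the Hirzebruch Signature Theorem,
$$\langle i^*p_1(X),[F]\rangle=\langle p_1(F),[F]\rangle = 3\,\mathrm{sign}(F).$$

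Now substitute $p_1(X)=sx^2+txy$. Since $y$ is a multiple of $f^*(\sigma_Y)$ up to the factor $d(X,Y)$, and $\sigma_Y$ restricts trivially to a point, we have $i^*y=0$. Therefore
$$i^*p_1(X)=s\,i^*x^2 .$$
From $c_1(X)=kx+ly$ and $i^*c_1(X)=c_1(F)$ (again by triviality of the normal bundle) we get $k\,i^*x=c_1(F)$. Hence
$$\langle i^*x^2,[F]\rangle=\frac{\langle c_1(F)^2,[F]\rangle}{k^2}.$$
This quantity is nonzero, since $c_1(F)^2=K_F^2>0$ for a del Pezzo surface; equivalently, this is the nonvanishing already used in Lemma \ref{lem:k}. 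Combining the two displays,
$$3\,\mathrm{sign}(F)=s\,\frac{\langle c_1(F)^2,[F]\rangle}{k^2},$$
and solving gives the stated formula for $s$.

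The only point needing care is that $s$ and $t$ are rational coefficients. This is harmless: Lemma \ref{lem:indep} only gives a rational basis $\{x^2,xy\}$ of $H^4(X,\q)$, so we should work rationally throughout. The restriction map $i^*$ and the evaluation on $[F]$ are $\q$-linear, so the computation above is valid verbatim over $\q$, and there is no real obstacle.
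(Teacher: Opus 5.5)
Your proposal is correct and follows essentially the same route as the paper's proof. You restrict $p_1(X)$ to a smooth fibre and apply the signature theorem, kill the $xy$-term using $i^*y=0$, and rewrite $\langle (i^*x)^2,[F]\rangle$ as $\langle c_1(F)^2,[F]\rangle/k^2$ using $c_1(X)=kx+ly$. One small wording point: the relation is $f^*(\sigma_Y)=d(X,Y)\,y$, so $i^*y=0$ follows from $d(X,Y)\,i^*y=i^*f^*\sigma_Y=0$ over $\q$, or from torsion-freeness of $H^2(F,\z)$; your rational setup already covers this.
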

\begin{proof}
By Hirzebruch Signature Theorem we have 
\begin{equation}\label{eq:signF}
3 \mathrm{sign}(F)=\langle p_1(F), [F] \rangle = \langle i^*p_1(X), [F] \rangle = \langle s (i^*x)^2, [F] \rangle =\frac{s}{k^2} \langle c_1(F)^2  , [F] \rangle ,
\end{equation}
where the last identity follows from $c_1(X)=kx + ly$.
\end{proof}

We summarize the topological invariants of the smooth fiber $F$ in the following table. 
\begin{equation}\label{eq:table}
\begin{array}{c|ccc}
F & \p^2 & \p^1 \times \p^1 & \p^2\#r\overline{\p^2} \ (3 \le r \le 8 ) \\
\hline
e(F) & 3 & 4 & r+3\\
\mathrm{sign}(F) & 1 & 0 & 1-r \\
\langle c_1(F)^2, [F] \rangle & 9 & 8 & 9-r \\
k & 3 & 2 & 1 \\
s & 3 & 0 & 3(1-r)/(9-r)
\end{array}
\end{equation}

Define 
$$K(X,Y)=\frac{d(X,Y)}{6}(K_{X/Y}^3-K_X^3).$$
Then a direct calculation shows that (see Lemma \ref{lem:k})
\begin{equation}\label{eqn:k}
K(X,Y)= \langle c_1(F)^2, [F] \rangle = \left \{ \begin{array}{cl}
	9, & \text{if $F \cong \p^2$} \\
	8, & \text{if $F \cong \p^1 \times \p^1$} \\
	9-r, & \text{if $F \cong \p^2 \# r\overline{\p^2}$, $3 \le r \le 8.$} 
	\end{array} \right.
	\end{equation}
Hence this numerical invariant detects the diffeomorphism type of the smooth fiber. 

\begin{proposition}\label{lem:fiber}
Let $g \colon X' \to X$ be an orientation-preserving diffeomorphism, then the smooth fibre of $f$ is diffeomorphic to the smooth fiber of $f'$, and hence $K(X,Y)=K(X',Y')$.
\end{proposition}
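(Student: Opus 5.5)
The plan is to extract from the triple $(H^*(X,\z), p_1(X), w_2(X))$ enough data to recover the smooth fiber $F$. The cleanest invariant is $K(X,Y)=\langle c_1(F)^2,[F]\rangle$. By equation (\ref{eqn:k}) it determines $F$ up to diffeomorphism, so it suffices to show that $g$ preserves $K(X,Y)$ or, equivalently, the pair $(k, \langle x^2y,[X]\rangle)$ together with $d(X,Y)$.

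By Proposition \ref{lem:gh2}, $g^*(y)=y'$ and $g^*(x)=\pm x'+ny$. Pulling back the cubic form gives
$$\langle x^2y,[X]\rangle=\langle (\pm x'+ny')^2y',[X']\rangle=\langle x'^2y',[X']\rangle,$$
since $y'^2=0$. So the number $m:=\langle x^2y,[X]\rangle$ in Lemma \ref{lem:k} is a diffeomorphism invariant. Next, $w_2$ is preserved, and $w_2(X)\equiv kx+ly$. The class $y$ is intrinsic up to sign: it spans the unique rank-one direction with $y^2=0$, via Lemma \ref{lem:indep}. Hence the parity of $k$ is an invariant, and so is the restriction of $w_2$ to a fiber.

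The first Pontrjagin class gives more. Write $p_1(X)=sx^2+txy$. Then
$$\langle p_1(X)\,y,[X]\rangle = s\,m,$$
and $g^*p_1(X)=p_1(X')$, $g^*y=y'$ give $sm=s'm'$, hence $s=s'$. From table (\ref{eq:table}), $s$ equals $3$ for $\p^2$, $0$ for $\p^1\times\p^1$, and $3(1-r)/(9-r)$ for $\p^2\#r\overline{\p^2}$, which is strictly monotone in $r$ and always negative. So $s$ alone separates all the cases. A more intrinsic way to see this is that $sm = \langle p_1(X),[F]\rangle/d = 3\,\mathrm{sign}(F)/d$.

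The main obstacle is the factor $d(X,Y)$, which is nontrivial only in the $r=3$ case. To handle it, I would show that $d$ is itself a diffeomorphism invariant: $f^*\sigma_Y=d\,y$, and the Poincaré dual of $f^*\sigma_Y$ is $[F]$ (Lemma \ref{lem:poincare}), so $d$ is the divisibility of $[F]$ in $H_2$. Since $g^*y=y'$, it is enough to compare $[F]$ with its image intrinsically. Here I would use that $\langle x^2,[F]\rangle = dm$ and that $e(F)$, $\mathrm{sign}(F)$ are recovered from $d$ and $s$. Since $s$ already pins down $r$ without involving $d$, however, this refinement is not actually needed. Once $F\cong F'$, equation (\ref{eqn:k}) gives $K(X,Y)=K(X',Y')$.
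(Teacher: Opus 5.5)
Your argument is correct and is essentially the paper's proof: both use Proposition \ref{lem:gh2} to get $g^*(y)=y'$, $g^*(x)=\pm x'+ny'$, and then show that the coefficient $s$ of $x^2$ in $p_1$ is preserved, after which table (\ref{eq:table}) identifies $F$. The only difference is mechanical: you pair with $y$, using $\langle p_1(X)y,[X]\rangle=s\,\langle x^2y,[X]\rangle$ and the invariance of $\langle x^2y,[X]\rangle\neq 0$, whereas the paper compares the $x'^2$-coefficient of $g^*p_1(X)$ directly. As you yourself note, the $d(X,Y)$ and $w_2$ digressions are unnecessary; there, $[F]$ lies in $H_4(X,\z)$, not $H_2$.
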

\begin{proof}
As in Lemma \ref{lem:s} we write 
$$p_1(X)=s x^2 + t xy, \ \ p_1(X')=s'x'^2 + t'x'y'.$$
Then by Lemma \ref{lem:gh2}
$$p_1(X')=g^*(p_1(X))=s(\pm x'+ ny')^2 + t (\pm x' +ny') y'=sx'^2 + \cdots.$$
Therefore $s=s'$. One sees from table (\ref{eq:table}) that the smooth fiber of $f$ is diffeomorphic to the smooth fiber of $f'$, and $K(X,Y)=K(X',Y')$. 
\end{proof}

Therefore MFSs under consideration with different $K(X,Y)$ are not diffeomorphic to each other. By table (\ref{eq:table}), when $1 \le K(X,Y) \le 6$, $k=1$; when $K(X,Y)=8$, $k=2$; when $K(X,Y)=9$, $k=3$. In the following we study the diffeomorphism classification for each case. 

\subsection{The classification}
\begin{thm}\label{thm:k1}
Assume $1 \le K(X,Y) \le 6$. Then $X$ and $X'$ are oriented diffeomorphic if and only if 
$$d(X,Y) = d(X',Y'), \ \ e(X)=e(X'), \ \ K(X,Y)=K(X', Y'),$$
$$K_{X/Y}^3= K_{X'/Y'}^3 \ \ \mathrm{or} \ \  K_{X/Y}^3= -K_{X'/Y'}^3+\frac{12(d(X,Y)-1)}{d(X,Y)}K(X,Y)$$
\end{thm}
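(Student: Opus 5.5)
Here $k=1$ (table (\ref{eq:table})), and $H^2(X,\z)$ has the basis $\{x,y\}$ with $f^*\sigma_Y=d\,y$, $d=d(X,Y)$. By Theorem \ref{thm:6mfds}, the proof has two parts. First, express the cubic form, $p_1(X)$, $w_2(X)$ and $b_3(X)$ through the numerical invariants $d$, $K=K(X,Y)$, $e(X)$, $K_{X/Y}^3$. Second, compare these data under the admissible changes of basis from Proposition \ref{lem:gh2}, namely $x\mapsto \pm x'+ny'$, $y\mapsto y'$.

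\textbf{The topological data.} Since $y^2=0$, the cubic form is determined by $x^2y=K/d$ (Lemma \ref{lem:k}) and $x^3=:m$.
\begin{itemize}
\item We have $c_1(X)=x+ly$ and $f^*c_1(Y)=2dy$, so $K_X^3=-(m+3lK/d)$ and $K_{X/Y}^3=-(m+3(l-2d)K/d)$. The difference gives $K_{X/Y}^3-K_X^3=6K/d$, consistent with (\ref{eqn:k}).
\item For $p_1(X)=sx^2+txy$, $s=3\,\mathrm{sign}(F)/K$ is fixed by Lemma \ref{lem:s}. The coefficient $t$ comes from Hirzebruch--Riemann--Roch: $\chi(\oo_X)=1$ gives $c_1p_1=c_1^3-48$, which is a linear relation determining $t$ in terms of $m,l$.
\item $w_2(X)\equiv x+ly \pmod 2$.
\item $e(X)=2+2b_2(X)-b_3(X)=6-b_3(X)$, so $e(X)$ determines $b_3$.
\end{itemize}

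\textbf{Normalizing the basis.} The substitution $x\mapsto x+ny$ changes $m$ to $m+3nK/d$ and $l$ to $l-n$. The quantity invariant under this shift is $K_{X/Y}^3$, together with $K_X^3$, which differs from it by the constant $6K/d$. So $K_{X/Y}^3$ is invariant under all admissible isomorphisms with sign $+$.

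The sign change $x\mapsto -x$ sends $m\mapsto -m$ and $l\mapsto -l$, but also changes $c_1$ to $-x+ly$. Since $c_1(X)$ is not a diffeomorphism invariant, the correct invariant data are the cubic form, $p_1$ and $w_2$, not $c_1$. I would therefore write the invariants intrinsically:
\begin{itemize}
\item $w_2$ reduces to the parity of $l$ modulo $x$, with the coefficient of $x$ equal to $1$ because $k=1$;
\item the normalized $m$ is $m$ modulo $3K/d$ after shifting;
\item $t$ is read off from $p_1$.
\end{itemize}
The claim to verify is that the pair (cubic form, $p_1$) with $x\to -x$ produces exactly the alternative value $K_{X'/Y'}^3=-K_{X/Y}^3+12(d-1)K/d$. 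The term $12(d-1)K/d$ should come from $f^*c_1(Y)=2dy$ versus $y$ and from the constraint linking $t$ to $c_1$ via HRR.

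\textbf{Necessity.} Given a diffeomorphism $g$, Proposition \ref{lem:gh2} gives $g^*y=y'$. Hence $d$ is preserved: $d$ is the divisibility of $f^*\sigma_Y$, and $f^*\sigma_Y$ is characterized, up to sign, as the primitive generator of the isotropic line $\{y\}$ times the ratio read from $x^2y\cdot d=K$. The fiber invariant $K$ is preserved by Proposition \ref{lem:fiber}, and $e$ is preserved trivially. Substituting $g^*x=\pm x'+ny'$ into the cubic form and into $p_1$ then yields one of the two relations.

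\textbf{Sufficiency.} Given the invariants, define $\varphi(y)=y'$ and $\varphi(x)=\pm x'+ny'$, choosing $n$ so that $x^3$ matches; the needed divisibility comes from the equality of $K_{X/Y}^3$ modulo $6K/d$. Then check that $p_1$ and $w_2$ match, using the HRR relation and the congruence of $l$, and apply Theorem \ref{thm:6mfds}.

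\textbf{Main obstacle.} The hardest step is the integrality and parity bookkeeping. One must show that $n$ can be chosen integrally so that $x^3$, the coefficient $t$ of $p_1$, and $w_2$ match simultaneously. In particular, the $w_2$ condition must follow automatically from the numerical equalities, for example through $p_1\equiv w_2^2 \pmod 2$ or through HRR integrality. I would first try to express $t$ and $l$ entirely in terms of $m$ modulo $3K/d$, and then check the parity constraint case by case for $K\in\{1,\dots,6\}$.
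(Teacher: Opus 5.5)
Your framework is the paper's (Theorem \ref{thm:6mfds}, Proposition \ref{lem:gh2}, and $p_1(X)=sx^2+txy$ with $s$ from Lemma \ref{lem:s} and $t$ from Hirzebruch--Riemann--Roch, HRR). However, the step that carries the proof is missing, and the argument you offer in its place does not work.

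You claim $K_{X/Y}^3$ is invariant under admissible isomorphisms with sign $+$ because it is unchanged by the basis shift $x\mapsto x+ny$. Re-choosing a basis of $H^2(X,\z)$ changes no class, so this says nothing about a diffeomorphism $g$ with $g^*x=x'+ny'$. Normalize $x=c_1(X)$, $x'=c_1(X')$, which is allowed since $k=1$. Then $g^*c_1(X)=c_1(X')+ny'$ and $\langle x^3,[X]\rangle=\langle x'^3,[X']\rangle+3nK/d$. So $K_{X/Y}^3\neq K_{X'/Y'}^3$ unless $n=0$, and the cubic form alone does not rule out $n\neq 0$. As you note yourself, $c_1$ is not a diffeomorphism invariant.

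The missing idea is that $p_1$ together with $\chi(\oo_X)=1$ pins down $n$. With $x=c_1(X)$, HRR gives $t=\bigl((1-s)\langle x^3,[X]\rangle-48\bigr)/\langle x^2y,[X]\rangle$, and likewise for $t'$.
\begin{itemize}
\item In the $+$ case, $g^*p_1(X)=p_1(X')$ gives $t'-t=2ns$, while the cubic form gives $x^3-x'^3=3n\,x^2y$. Substituting into the HRR expressions yields $n(3-s)=0$. Since $s=3(1-r)/(9-r)\neq 3$, this forces $n=0$, i.e.\ $g^*c_1(X)=c_1(X')$, whence $K_{X/Y}^3=K_{X'/Y'}^3$.
\item In the $-$ case the same computation gives $n(3-s)=96/\langle x^2y,[X]\rangle$, hence $n=4d(X,Y)$. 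It is this specific shift that produces the second relation.
\end{itemize}
Your proposal leaves both relations as ``the claim to verify'', so necessity is not established.

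The same point reorganizes sufficiency. You cannot ``choose $n$ so that $x^3$ matches'' from a congruence of $K_{X/Y}^3$ modulo $6K/d$: any shift other than $0$ (resp.\ $4d$) breaks $p_1$. You must take $\varphi(y)=y'$ and $\varphi(x)=x'$ (resp.\ $\varphi(x)=-x'+4d\,y'$), and use the exact equality (resp.\ the exact alternative relation) to match $x^3$. Then $p_1$ is preserved because $s$ depends only on $K(X,Y)$ and $t$ is determined by $x^3$ and $x^2y$ through HRR. Also $w_2$ is preserved because $w_2(X)\equiv x$ and $\varphi(x)\equiv x'\pmod 2$ in both cases. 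So the ``main obstacle'' you name, case-by-case parity and integrality bookkeeping, disappears once $x=c_1(X)$ is used.

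Separately, with $f^*c_1(Y)=2dy$ your two displayed formulas for $K_X^3$ and $K_{X/Y}^3$ differ by $6K$, not $6K/d$. The $d$-dependent term in the second relation is sensitive to exactly this bookkeeping.
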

\begin{rem}
Note that when $1 \le K(X,Y) \le 5$, then condition on $d(X,Y)$ is redundant and the last equality in the condition reduces to $K_{X/Y}^3=  \pm K_{X'/Y'}^3$.
\end{rem}
\begin{proof}
In this case $k=1$, we may choose $x=c_1(X)$. Assume $p_1(X)=s x^2 + t xy$, then from the Hirzebruch-Riemann-Roch formula 
$$\langle c_1(X)^3-c_1(X)p_1(X), [X]\rangle = 48\chi( \mathcal O_X)$$
and the fact that $\chi( \mathcal O_X)=1$ we have 
\begin{equation}\label{eq:t}
t=\frac{(1-s)\langle x^3, [X]\rangle-48}{\langle x^2y, [X] \rangle}.
\end{equation}
The same holds for $X'$. 

Let $g \colon X' \to X$ be an orientation-preserving diffeomorphism. Then $e(X)=e(X')$, $K(X,Y)=K(X',Y')$ (Proposition \ref{lem:fiber}). By Proposition \ref{lem:gh2} we have $g^*(x)=\pm x' + ny'$, $g^*(y)=y'$. Therefore $\langle x^2 y, [X] \rangle = \langle x'^2 y', [X'] \rangle$. Notice that $K(X,Y)=d(X,Y) \langle x^2 y, [X] \rangle$. This shows $d(X,Y)=d(X',Y')$.

(1) If $g^*(x)=x'+ny'$, then from 
$$p_1(X')=g^*(p_1(X))=s(x'+ny')^2+t(x'+ny')y'=sx'^2+(2ns+t)x'y'$$
we have $t-t'=-2ns$. Substitute $t$ and $t'$ using equation (\ref{eq:t}) we get $(3-s)n=0$. From table (\ref{eq:table}) we have $s=  3(r-1)/(9-r) \ne 3$ for $ 3 \le r \le 8$.
Therefore $n=0$, $g^*(x)=x'$, $g^*(y')=y$. Since $K_{X/Y}=c_1(X)-c_1(Y)=x-2y$, we have 
$$K_{X/Y}^3=\langle (x-2y)^3, [X] \rangle = \langle (x'-2y')^3, [X'] \rangle =K_{X'/Y'}^3.$$ 

(2) If $g^*(x)=-x'+ny'$, then the analysis with Pontrjagin classes shows $t+t'=-2ns$. Substitute $t$ and $t'$ using equation (\ref{eq:t}) we get $(3-s)n=96/\langle x^2y, [X]\rangle$, where $\langle x^2y, [X]\rangle=(9-r)/d(X,Y)$ and $s=(3-3r)/(9-r)$. This implies $n=4d(X,Y)$. A straightforward calculation shows that $K_{X/Y}^3=- K_{X'/Y'}^3+\frac{12(d(X,Y)-1)}{d(X,Y)}K(X,Y)$.

Conversely, assume the invariants of $X$ and $X'$ coincide. If $K_{X/Y}^3=K_{X'/Y'}^3$, we define an isomorphism $\varphi \colon H^2(X,\z) \to H^2(X',\z)$ by $\varphi (x)=x'$, $\varphi(y)=y'$. We have $\langle x^2y, [X]\rangle = K(X,Y)/d(X,Y)=K(X',Y')/d(X',Y')=\langle x'^2y', [X']\rangle$. Thus the equation  $K_{X/Y}^3=K_{X'/Y'}^3$ implies $\langle x^3, [X] \rangle = \langle x'^3, [X'] \rangle$. In the expression $p_1(X) = sx^2 + txy$, the coefficients $s$ and $t$ are determined by Lemma \ref{lem:s} and equation (\ref{eq:t}), respectively. Therefore $\varphi$ preserves the first Pontrjagin class. Clearly $\varphi$ also preserves $w_2$ (since $c_1(X)=kx+ly$). Therefore there is an orientation-preserving diffeomorphism $g \colon X' \to X$. If $K_{X/Y}^3= -K_{X'/Y'}^3+\frac{12(d(X,Y)-1)}{d(X,Y)}K(X,Y)$, then we define an isomorphism $\varphi \colon H^2(X,\z) \to H^2(X',\z)$ by $\varphi (x)=-x'+4d(X,Y)y'$, $\varphi(y)=y'$. A similar calculation shows that $\varphi$ preserves the cubic form and characteristic classes.

\end{proof}

\begin{thm}\label{thm:k2}
Assume $K(X,Y)=K(X',Y')=8$. Then  $X$ and $X'$ are oriented diffeomorphic if and only  $K_{X/Y}^3=K_{X'/Y'}^3$. 
\end{thm}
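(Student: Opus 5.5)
The plan is to apply the Wall--Jupp classification (Theorem \ref{thm:6mfds}) in the basis $\{x,y\}$ of $H^2(X,\z)$ fixed before Lemma \ref{lem:fiberk}. Here $K(X,Y)=8$, so by Proposition \ref{prop:y1h2} and table (\ref{eq:table}) we have $d(X,Y)=1$, $k=2$ and $s=0$. Write $c_1(X)=2x+ly$ and $a=\langle x^3,[X]\rangle$. Lemma \ref{lem:k} gives $x^2y=2$, and $y^2=0$. So the cubic form is determined by the single integer $a$.

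By Lemma \ref{lem:s}, $p_1(X)=t\,xy$. The Hirzebruch--Riemann--Roch identity $\langle c_1^3-c_1p_1,[X]\rangle=48\chi(\oo_X)=48$ gives $c_1^3=8a+24l$ and $c_1p_1=4t$, hence $t=2a+6l-12$. Since $K_{X/Y}=-(c_1(X)-2y)=-(2x+(l-2)y)$, we get
\[
K_{X/Y}^3=-(8a+24l-48)=-4t .
\]
So $K_{X/Y}^3$ is exactly the Pontrjagin datum $t$. Moreover $w_2(X)\equiv l\,y \pmod 2$.

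\textbf{Necessity.} Let $g\colon X'\to X$ be an orientation-preserving diffeomorphism. Proposition \ref{lem:gh2} gives $g^*x=\pm x'+ny'$ and $g^*y=y'$. We compare $g^*p_1(X)=p_1(X')$ (or $g^*$ applied to the Chern class relation). The $x'$-coefficient of $g^*c_1(X)=\pm 2x'+(2n+l)y'$ should be $+2$. This forces the plus sign, $l'=l+2n$ and $a=a'+6n$. A direct substitution then gives $8a+24l=8a'+24l'$, that is, $K_{X/Y}^3=K_{X'/Y'}^3$. If the use of $c_1$ is not legitimate, I would instead use the equality $t=t'$ coming from $p_1$, which gives the same identity.

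\textbf{Sufficiency.} Assume $a+3l=a'+3l'$. If $l\equiv l'\pmod 2$, set $n=(l'-l)/2$ and define $\varphi(x)=x'+ny'$, $\varphi(y)=y'$. Then $\varphi(x)^3=a'+6n=a$, so the cubic form is preserved. We have $\varphi(p_1)=t\,x'y'=t'x'y'$, and $w_2$ corresponds because $l\equiv l'$. Theorem \ref{thm:6mfds} then yields the diffeomorphism.

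The main obstacle is the parity $l \bmod 2$, which is exactly the spin condition. The equation $a+3l=a'+3l'$ alone does not force $l\equiv l'$. The Wu/Steenrod relations are vacuous here, and I checked that Riemann--Roch integrality is as well: $\chi(\oo_X(x))=a+l+2$. So the congruence has to come from the geometry.

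My first attempt would use the realization of $X$ as a quadric hypersurface $X\in|2u+m\,\sigma|$ in $\p(\xi)$, with $\xi$ of rank $4$ over $\p^1$, from the proof of Lemma \ref{lem:fiberk}. By adjunction, $l\equiv \deg\xi+m \pmod 2$, and $a=2u^4+mu^3\equiv m \pmod 2$. I would then use the freedom to twist $\xi$ and change the lift $x$ to try to show that $\deg\xi$ can be normalized, for example by a splitting $\xi=\bigoplus\oo(a_i)$ together with the constraint that a generic quadric has only irreducible fibres. Such a normalization would express $l \bmod 2$ through $a+3l$, for instance through $K_{X/Y}^3 \bmod 16$. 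If this fails, the statement would have to be supplemented by the $w_2$-type, as in the case $\dim Y=2$.
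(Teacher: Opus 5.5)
\textbf{The obstruction you found cannot be overcome: the statement is false as written, so no proof of it can be completed.} Keep your notation and write $X\in|2u+m\,y|$ in $\p(\xi)$, with $c_1(\xi)=c\,y$. Then $x^3=-2c+m$ and $c_1(X)=2x+(c+2-m)y$.

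Twisting $\xi$ by $\oo(j)$ replaces $(u,m)$ by $(u-jy,\,m+2j)$, so only $m \bmod 2$ survives. This parity is intrinsic, because $m\equiv x^3 \pmod 2$ and $(x+jy)^3=x^3+6j$. The paper's own proof avoids the issue only by asserting that $j_*[X]$ is Poincar\'e dual to $2u$, i.e.\ $m=0$. That assumption is exactly what forces $l\equiv c$ and makes $w_2$ a function of $K_{X/Y}^3$.

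Both parities actually occur with identical invariants:
\begin{itemize}
\item The hypersurface of bidegree $(1,2)$ in $\p^1\times\p^3$ is the blow-up of $\p^3$ along an elliptic quartic (Mori--Mukai No.~25 with $\rho=2$). It has $c=0$, $m=1$, $c_1=2x+y$, so it is non-spin.
\item A general member of $|\oo_{\p(\xi)}(2)|$ with $\xi=\oo^{2}\oplus\oo(-1)^{2}$ is smooth by Bertini, since $\mathrm{Sym}^2\xi^*$ is globally generated. It has $c=-2$, $m=0$, $c_1=2x$, so it is spin.
\end{itemize}
Each has $4m-2c=4$ quadric-cone fibres, so monodromy swaps the two rulings and $\rho(X/Y)=1$. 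Each is therefore a rational MFS over $\p^1$, hence simply connected with torsion-free homology. Both have $K(X,Y)=8$, $e=4$, $b_3=2$, $K_X^3=-32$ and $K_{X/Y}^3=16$.

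They are not even homotopy equivalent, since $w_2$ differs. So your hoped-for normalization via $K_{X/Y}^3 \bmod 16$ must fail, and your fallback is the correct repair: the spin condition has to be added. Since here $K_X^3=K_{X/Y}^3-48$, the same pair also refutes Theorem~\ref{thm:main}(3)(b).

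For the corrected statement, your argument still has two holes, and the same geometric input closes both.
\begin{itemize}
\item In the necessity part, $g^*c_1(X)=c_1(X')$ is not available for a diffeomorphism. From $p_1$ alone you only get $t'=\pm t$, and the minus sign, coming from $g^*x=-x'+ny'$, is compatible with the cubic form, $p_1$ and $w_2$.
\item In the sufficiency part, Theorem~\ref{thm:6mfds} also requires $b_3(X)=b_3(X')$, which you never address.
\end{itemize}
What is missing is the remaining Chern classes of $X\in|2u+my|$: $c_2(X)=2x^2+(c+4)xy$ and $c_3(X)=(c+4-2m)x^2y$. Hence
\[
t=2c-4m=e(X)-8=-(b_3(X)+2)<0, \qquad K_{X/Y}^3=32-4e(X).
\]
Equivalently, $-t$ counts the cone fibres. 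This fixes the sign, so necessity reduces to $e(X)=e(X')$, and it also shows that $b_3$ is determined by $t$. With that in hand, your map $\varphi(x)=x'+\bigl((l'-l)/2\bigr)y'$, $\varphi(y)=y'$ proves the corrected statement: $X$ and $X'$ are oriented diffeomorphic if and only if $K_{X/Y}^3=K_{X'/Y'}^3$ and both are spin or both are non-spin.
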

\begin{proof}
In this case by Lemma \ref{lem:fiberk} and Theorem \ref{thm:mori} (2), $X$ and $X'$ are quadric hypersurfaces in projective bundles. We first analyze the topology of such bundles in detail.

Let $\xi$ be a rank $4$ holomorphic vector bundle over $Y$, with $c_1(\xi)=cy $, $y \in H^2(\p^1,\z)$ the fundamental class. Let $\pi \colon \p(\xi) \to Y$ be the projective bundle, $L^* \to \p(\xi)$ be the dual of the tautological line bundle, $u=c_1(L^*)$. Then by Leray-Hirsch Theorem $H^*(\p(\xi),\z)$ is generated as an $H^*(\p^1,\z)$-module by $\{1,u,u^2, u^3\}$, with the relation $u^4+cu^3y=0$. 

Let $j \colon X \to \p(\xi)$ be the inclusion map, $f = \pi \circ j \colon X \to Y$ a quadric hypersurface bundle with fiber $F=f^{-1}(y)$ a quadric surface in $\pi^{-1}(y)=\p^3$. The Poincar\' e dual of $j_*[X]$ equals $2u$. Let $x=j^*(u)$, then as we have seen in the proof of Lemma \ref{lem:fiberk},  $\{x, y\}$ is a basis of $H^2(X)$. The cubic form on $H^2(X)$ is given by
\begin{equation}\label{eq:cup}
\langle x^2y, [X] \rangle = \langle u^2y, j_*[X] \rangle = 2, \ \ \langle x^3, [X] \rangle =\langle u^3, j_*[X] \rangle =-2c, \ \ xy^2=y^3=0.
\end{equation} 
Let $\nu X$ be the normal bundle of $X$ in $\p(\xi)$, then $c_1(\nu X) = j^*(2u) = 2x$, and (see \S \ref{sec:projbdl})
$$TX \oplus \nu X=i^*T\p(\xi), \ \ T\p(\xi) \oplus \underline{\mathbb C} = L^* \otimes \pi^*\xi \oplus \pi^*T\p^1.$$ 
From these we compute the Chern classes of $X$:
$$c_1(X)=2x+(c+2)y, \ \ c_2(X)=2x^2+(c+4)xy, \ \ c_3(X)=(c+4)x^2y.$$
The first Pontrjagin class 
\begin{equation}\label{eq:p11}
p_1(X)=c_1(X)^2 - 2c_2(X)=2cxy,
\end{equation}
and 
\begin{equation}\label{eq:kxy}
K_{X/Y}^3 = \langle (c_1(X)-c_1(Y))^3, [X] \rangle = \langle (2x+cy)^3, [X] \rangle =8c.\end{equation}
By comparing the Euler characteristic of $X$ 
$$6-b_3(X)=e(X)=\langle c_3(X), [X] \rangle =2c+8$$
we have $b_3(X)=-2c-2$. Therefore $c \le -1$. 

Now if $g \colon X' \to X$ is an orientation-preserving diffeomorphism, then by Proposition \ref{lem:gh2},
$$g^*(p_1(X))=g^*(2cxy)= 2c(\pm x'+ny')y'=\pm 2c x'y'=p_1(X') = 2c'x'y'.$$
Therefore $c = \pm c'$. But since both $c$ and $c'$ are $\le -1$, we have $c=c'$. 
By equation (\ref{eq:kxy}) $K_{X/Y}^3=K_{X'/Y'}^3$.

Conversely, if $K_{X/Y}^3=K_{X'/Y'}^3$, then $c=c'$. We define an isomorphism $\varphi \colon H^2(X,\z) \to H^2(X',\z)$ by $\varphi(x)=x'$, $\varphi(y)=y'$. Then by equations (\ref{eq:cup}) and (\ref{eq:p11}), $\varphi$ is an isomorphism between the cubic forms and preserves $p_1$ and $w_2$. Therefore there is an orientation-preserving diffeomorphism $g \colon X' \to X$.
\end{proof}

Recall that for a prime number $p \ge 3$, the Steenrod reduced power $\mathcal P^i \colon H^n(X,\z/p) \to H^{n+2i(p-1)}(X,\z/p)$ is a stable cohomology operation such that for any $x \in H^{2i}(X,\z/p)$, $\mathcal P^i(x)=x^p$. See \cite[\S 4.L]{Hat02}.

\begin{thm}\label{thm:k3}
A Mori fiber space $X$ with $K(X,Y)=9$ is oriented diffeomorphic to $\p^1 \times \p^2$ or $\p(\mathcal O \oplus \mathcal O(-1))$. These two manifolds are distinguished by the Steenrod reduced power $\mathcal P^1 \colon H^2(X,\mathbb Z/3) \to H^6(X,\mathbb Z/3)$. For $\p^1 \times \p^2$ it is trivial whereas $\p(\mathcal O \oplus \mathcal O(-1))$ has non-trivial $\mathcal P^1$. 
\end{thm}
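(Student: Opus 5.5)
When $K(X,Y)=9$ the smooth fibre is $\p^2$, so by Theorem \ref{thm:mori}(2) $X=\p(E)$ for a rank $3$ holomorphic bundle $E$ over $Y=\p^1$. By Grothendieck, $E\cong\oo(a_1)\oplus\oo(a_2)\oplus\oo(a_3)$. As a smooth manifold $\p(E)$ depends only on the topological type of $E$ up to tensoring with line bundles. Rank-$3$ complex bundles over $S^2$ are classified by $c_1(E)\in\z$, and tensoring with $\oo(m)$ changes $c_1$ by $3m$. Hence there are at most three diffeomorphism types, given by $c_1 \equiv 0,1,2 \pmod 3$. Dualizing $E$ does not change $\p(E)$ as a manifold, since $\p(E)\cong\p(E^*)$ up to orientation-preserving diffeomorphism via a hermitian metric and complex conjugation on the fibres; composing with a fibrewise conjugation may reverse orientation, and I would check this carefully. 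This identifies the classes $1$ and $2$, so we are left with $c_1\equiv 0$ and $c_1\equiv \pm1$. These are realized by $\p^1\times\p^2=\p(\oo^3)$ and by $\p(\oo\oplus\oo\oplus\oo(-1))$.

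I would then identify the latter with the space the statement denotes $\p(\oo\oplus\oo(-1))$. This is presumably a typo for the rank-$3$ case, or else a rank-$2$ notation for the $\p^2$-bundle. The cleaner route is to compute its cohomology ring directly by Leray--Hirsch and compare. An alternative to the bundle argument is to use the invariants directly. Take $u=c_1$ of the tautological bundle, with relation $u^3+c\,u^2y=0$ and $u^2y=1$. Replacing $u$ by $u+my$ shifts $c$ by $3m$, so the cubic form is determined by $u^3=-c \bmod 3$ up to sign. The classes $p_1$ and $w_2$ are computed as in \S\ref{sec:projbdl}. Theorem \ref{thm:6mfds} then gives the diffeomorphisms, with $b_3=0$.

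To distinguish the two types, I would compute $\mathcal P^1(z)=z^3$ for $z\in H^2(X,\z/3)$. Write $z=\alpha u+\beta y$. Then $z^3=\alpha^3u^3+3\alpha^2\beta u^2y=\alpha^3u^3 \pmod 3$, because $y^2=0$. Since $u^3=-c$, the operation $\mathcal P^1$ vanishes identically exactly when $c\equiv0\pmod 3$. This condition is independent of the choice of basis and is a homotopy invariant, so the two manifolds are not even homotopy equivalent. For $\p^1\times\p^2$ we have $c=0$, so $\mathcal P^1$ is trivial. For the twisted bundle we have $c=\pm1$, so the cube of the generator restricting to the hyperplane class is nonzero.

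The main obstacle is the orientation bookkeeping. One must make sure that $c_1\equiv1$ and $c_1\equiv2$ give orientation-preserving diffeomorphic spaces. I would settle this in the cubic-form formulation: the map $u\mapsto -u$, $y\mapsto y$ sends $u^2y\mapsto u^2y$ and $u^3\mapsto -u^3$. So it relates $c$ and $-c$ while preserving the form $u^2y=1$. Checking that it also preserves $p_1$ and $w_2$ then finishes the argument via Theorem \ref{thm:6mfds}.
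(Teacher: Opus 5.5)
Your proposal is correct and follows essentially the paper's route. Both arguments classify $\p^2$-bundles over $\p^1$ by $c_1 \bmod 3$ (the paper via $\pi_1(PGL_3(\mathbb C))\cong\z/3$, you via Grothendieck splitting and twisting by line bundles), reduce to two cases, and separate them by $\mathcal P^1(z)=z^3\equiv\alpha^3u^3 \pmod 3$, using $y^2=0$. Your explicit identification of the classes $c_1\equiv 1$ and $c_1\equiv 2$ fills a step the paper leaves implicit, and the orientation worry resolves favorably: fibrewise conjugation on a fibre of complex dimension $2$ preserves orientation, and equivalently $u\mapsto -u$, $y\mapsto y$ carries $p_1=3u^2+2c\,uy$ and $w_2\equiv u+cy \pmod 2$ for $c$ to those for $-c$. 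Your reading of $\p(\oo\oplus\oo(-1))$ as the rank-$3$ bundle $\p(\oo\oplus\oo\oplus\oo(-1))$ is also the right one.
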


\begin{proof}
By Lemma \ref{lem:fiberk} the smooth fiber is $\p^2$, and by Theorem \ref{thm:mori}  $X$ is a projective bundle $\p(\xi)$ over $\p^1$, with structure group $PGL_3(\mathbb C)$. From the exact sequence 
$$\pi_1(\mathbb C^*) \to \pi_1GL_3(\mathbb C) \to \pi_1(PGL_3(\mathbb C)) \to 0$$
associated to the fiber bundle $\mathbb C^* \to GL_3(\mathbb C) \to PGL_3(\mathbb C)$ one sees that $\pi_1PGL_3(\mathbb C) \cong \z/3$. Therefore projective bundles $\p(\xi)$ over $\p^1$ are detected by $c_1(\xi) \pmod 3$. Therefore there are at most two diffeomorphism types, namely, $\p^1 \times \p^2$ and $\p(\mathcal O \oplus \mathcal O(-1))$. In the following we show that these two manifolds are not homotopy equivalent. 

Let $\{x,y\}$ and $\{x',y'\}$ be bases of $H^2(\p^1 \times \p^2)$ and $H^2(\p(\mathcal O \oplus \mathcal O(-1)))$, respectively, given before Lemma \ref{lem:fiberk}. Then 
\begin{equation}\label{eq:k=1}
x^3=0, \ x^2y=1, y^2=0; \ x'^3=1, \ x'^2y'=1, \ y'^2=0.
\end{equation} 

The Steenrod reduce power $\mathcal P^1$ is the operation
$$ \mathcal P^1\colon H^2(-,\z/3) \to H^6(-,\z/3), \ \mathcal P^1(\alpha)=\alpha^3.$$
From equation (\ref{eq:k=1}) one sees that $\mathcal P^1$ is trivial for $\p^1 \times \p^2$ while non-trivial for $\p(\mathcal O \oplus \mathcal O(-1))$. Hence these two spaces are not homotopy equivalent.

\end{proof}

\subsection{diffeomorphic MFSs with different $\dim Y$}\label{subsec:mixed}
Finally, we finish the proof of the main theorem by studying MFSs $f \colon X \to Y$ and $f' \colon X' \to Y'$ with $X$ diffeomorphic to $X'$ but $\dim Y \ne \dim Y'$.  An obvious example is $\p^1 \times \p^2$, which fibers both over $\p^1$ and $\p^2$. Another example, denoted by $\mathbb X$, is the space No.~2 in \cite[Table 12.3]{PS99}. It is a smooth Fano threefold which is a double cover of $\p^1 \times \p^2$ branched along a divisor of bidegree $(2,4)$. By \cite[Theorem 8.1.7(4)]{PS99}, $\mathbb X$ is a standard conic bundle over $\p^2$. On the other hand, it can be shown that $\mathbb X$ is a Mori fiber space over $\p^1$ with fiber a del Pezzo surface. Let $\pi \colon \mathbb X \to \p^1 \times \p^2$ be the branched double cover, $p_1 \colon \p^1 \times \p^2$ be the projection, $g= p_1 \circ \pi \colon \mathbb X \to \p^1$ be the composition. Let $z \in \p^1$ be a regular value of $g$, since $\mathbb X$ is Fano, so is $F=g^{-1}(z)$.  Let $D \subset \p^1 \times \p^2$ be the branch locus of $\pi$, then $\pi |_F \colon F \to \{z \} \times \p^2$ is a branched cover with branch locus $(\{z\} \times \p^2) \cap D$, which is a quartic curve. Therefore the equation in \cite[p.237]{BHPV} implies $p_a(F)=0$, $c_1(F)^2=2$. This shows that $F$ is del Pezzo surface diffeomorphic to $\p^2 \# 7 \overline{\p^2}$.

\begin{thm}\label{thm:mixed}
Let $f \colon X \to Y$ be a MFS with $\dim Y=2$. If $X$ is diffeomorphic to another Mori fiber space $X'$, $f' \colon X' \to Y'$ with $\dim Y'=1$. Then $X$ is oriented diffeomorphic to $\p^2 \times \p^1$ or $\mathbb X$.
\end{thm}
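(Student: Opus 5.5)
Since $\dim Y'=1$, Lemma \ref{lem:b2} and the discussion at the start of \S\ref{sec:1} give $Y'=\p^1$, $b_2(X')=2$ and $\chi(\oo_{X'})=1$. A diffeomorphism preserves $b_2$, so $b_2(X)=2$. Hence $b_2(Y)=1$, and $Y$ is a simply connected surface with $b_2=1$, which forces $Y=\p^2$ (\cite[p.376]{BHPV}). The argument then splits according to whether the discriminant curve $C_f$ is empty. In each case I will compare the cubic form and $p_1$ of $X$ (from \S\ref{sec:projbdl} or Proposition \ref{prop:top2}) with those of $X'$ (from \S\ref{sec:1}). The key invariant on the $X'$ side is $y'$, which is primitive with $y'^2=0$ and, by Lemma \ref{lem:indep}, is the unique class up to sign and multiples with vanishing square. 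Likewise $p_1(X')=s'x'^2+t'x'y'$, where $s'$ is fixed by the fiber type (table (\ref{eq:table})).

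\textbf{Smooth case.} Here $X=\p(E)$ over $\p^2$. With $x=2u-c_1$ and a generator $y$ of $H^2(\p^2)$, formula (\ref{eq:cubic}) gives $x^2y=0$, $xy^2=-2$, $y^3=0$ and $x^3=-2(c_1^2-4c_2)$. By (\ref{eq:class}), $p_1(X)=(c_1^2-4c_2+3)y^2$. Every cubic form on a rank-2 lattice with a nonzero element $y'$ satisfying $y'^2=0$ must be matched up, so I look for classes $a\in H^2(X)$ with $a^2=0$. Writing $a=\alpha u+\beta y$, the condition is a quadratic equation in $\alpha,\beta$, and $\alpha\neq 0$ is needed for $a$ not to be a multiple of $y$. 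A nontrivial such class $y'$ requires the discriminant $c_1^2-4c_2$ to take special values: when it vanishes, $E$ is projectively trivial and $X\cong\p^2\times\p^1$. On the $X'$ side I would use that $y'$ pairs with $p_1$ through $\langle y'p_1(X'),[X']\rangle=s'\langle x'^2y'\rangle$, which equals $3\,\mathrm{sign}(F')$ by Lemma \ref{lem:s}. Matching this against $\langle g^*(y')p_1(X)\rangle$ together with the cubic values should force $c_1^2-4c_2=0$, giving $X\cong\p^1\times\p^2$. Theorem \ref{thm:k3} then handles the fact that $\p(\oo\oplus\oo(-1))$ over $\p^1$ is not homotopy equivalent to it.

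\textbf{Singular case.} Now $b_3(X)=c_1^2+3c_1\ge 4$ (proof of Theorem \ref{thm:dim2singular}), so $X'$ cannot be a $\p^2$-bundle, since those have $b_3=0$. If $K(X',Y')=8$, then $p_1(X')=2c'x'y'$ lies in the span of a multiple of $y'$, which has square zero. If $1\le K\le 6$, then $\langle x'^2y'\rangle=K/d$ and the cubic form is explicit. In all cases I would find the square-zero classes in $H^2(X)$ using $2u^2=c_1uy+c_2y^2$ and require compatibility with $p_1(X)=py^2+3u^2$, whose coefficients are given by Proposition \ref{prop:top2}. Combined with $\chi=1$, $e$ and $b_3$, this should yield a Diophantine system whose only solution has $c_1=4$ ($C_f$ a quartic, $b_3=28$?). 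I must double-check this value against $b_3(\mathbb X)=40$: there $c_1$ is $5$ or whichever root gives $c_1^2+3c_1=40$, namely $c_1=5$, which matches the sextic discriminant curve of $\mathbb X$. The fiber type $K=2$, i.e.\ $\p^2\#7\overline{\p^2}$, and the resulting cubic form should then agree with those of $\mathbb X$. Theorem \ref{thm:6mfds} and the $X'$-side classification of Theorem \ref{thm:k1} would then identify $X$ with $\mathbb X$.

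\textbf{Main obstacle.} The hardest step is the arithmetic in the singular case: showing that the square-zero class condition, the $p_1$ matching and the Hirzebruch–Riemann–Roch constraints have a unique solution. I would first try to eliminate variables using $\chi=1$ on both sides, reducing to a single equation in $c_1$ and $r$. Then I would bound $c_1$ using $s'=3(1-r)/(9-r)$, which must equal the $x'^2$-coefficient of $g^*p_1(X)$ and be compatible with integrality, and check the few remaining cases by hand.
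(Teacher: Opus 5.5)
\textbf{Smooth case.} Your reduction to $Y=\p^2$ is fine, and the smooth case closes more directly than you suggest. $H^4(X;\q)$ has basis $\{uy,y^2\}$, so $(\alpha u+\beta y)^2=0$ is two equations, not one quadratic equation. Since $y^2\neq 0$ forces $\alpha\neq0$, they give $\beta=-\alpha c_1/2$ and $c_1^2=4c_2$ exactly. After twisting by a line bundle, $E$ has $c_1=c_2=0$, so it is smoothly trivial and $X\cong\p^2\times\p^1$. No $p_1$-matching and no Theorem \ref{thm:k3} are needed.

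\textbf{Singular case: the gap.} Here you give no argument, only an expected ``Diophantine system'' whose output you guess inconsistently. Since $b_3=d^2-3d$ for $d=\deg C_f$, a quartic gives $b_3=4$, not $28$. Likewise $b_3(\mathbb X)=40$ forces an octic discriminant (the curve $b^2-4ac$ for a branch divisor $as^2+bst+ct^2$ with quartics $a,b,c$), not a sextic. Your bound $b_3\ge4$ also fails for cubic discriminants.

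\textbf{The missing idea.} The paper runs the same square-zero computation with the relation $2u^2=c_1u+c_2$ of Proposition \ref{prop:h2}. For the primitive class $g^*(y')=\alpha u+\beta y$, the $uy$- and $y^2$-coefficients force $\alpha=\pm1$, $\deg C_f=4|\beta|$ and $\langle c_2,[Y]\rangle=-\frac18[C_f]\cdot[C_f]$. Then $u\pm\beta y\mapsto\pm y'$ and $y\mapsto\pm x'+qy'$. With $p$ fixed by Hirzebruch--Riemann--Roch ($\chi(\oo_X)=1$), the $x'^2$-coefficient of the transported $p_1(X)=p+3u^2$ is $3-3\beta^2$, and it must equal $s$ from table (\ref{eq:table}). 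Only two cases survive:
\begin{itemize}
\item $\beta=\pm1$: $s=0$, fibre $\p^1\times\p^1$, quartic $C_f$, $b_3=4$;
\item $\beta=\pm2$: $s=-9$, fibre $\p^2\#7\overline{\p^2}$, octic $C_f$, $b_3=40$.
\end{itemize}
In each case the ring, $w_2$, $p_1$ and $b_3$ are determined, so Theorem \ref{thm:6mfds} gives at most one diffeomorphism type.

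\textbf{The uniqueness you aim for is not provable.} The paper discards $\beta=\pm1$ by computing $24+6-72+40=-2$ and calling this inconsistent with Proposition \ref{prop:top2}(4). But (4) is a formal consequence of the same relations used to compute $K_X^3$. In the normalization $2u^2=c_1u+c_2$ one has $\langle c_2,[Y]\rangle=-2$ here. The ``$c_2=1$'' used in that step is the coefficient in $x^2-c_1xy+c_2y^2=0$, which equals $-\frac12\langle c_2,[Y]\rangle$. So there is no contradiction.

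\textbf{This case is realized.} Take No.~18 in \cite[Table 12.3]{PS99}: the double cover of $\p^1\times\p^2$ branched along a divisor of bidegree $(2,2)$, with $-K_X^3=24$ and $b_3=4$. It is simply connected with torsion-free homology. It is a conic bundle over $\p^2$ with quartic discriminant and also a quadric-surface fibration over $\p^1$ (the case $c=-3$ of Theorem \ref{thm:k2}). So your remark that $p_1(X')$ is a multiple of $x'y'$ when $K(X',Y')=8$ is not an obstruction; that is exactly the surviving case.

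A correct completion of your plan yields three possibilities: $\p^2\times\p^1$, $\mathbb X$, and this double cover. The statement as given needs that additional exception.
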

\begin{proof}
If $X$ is diffeomorphic to $X'$, then $b_2(X)=b_2(X')=2$, hence $b_2(Y)=b_2(X)-1=1$,
which implies $Y=\p^2$ (\cite[p.375]{BHPV}). 

Following the discussion in \S \ref{sec:2}, we choose a basis of $H^2(X,\Z)$ as follows: let $y=c_1(\mathcal O(-1))$ be a generator of $H^2(Y,\z)$, with $c_1(Y)=3y$; if $C_f = \emptyset$,  let $x=c_1(L)$; if $C_f \ne \emptyset$, let $x=c_1(X)-c_1(Y)$. Then $\{x, y\}$ is a basis of $H^2(X)$, and there are relations given in formula (\ref{eq:leray1}) or Proposition \ref{prop:h2}
\begin{equation}\label{eq:smooth}
x^2-c_1xy+c_2y^2=0,
\end{equation} 
where $c_1$, $c_2$ are integers if $C_f = \emptyset$, or half integers if $C_f \ne \emptyset$.  Let $\{x',y'\}$ be the basis of $H^2(X',\z)$ introduced before Lemma \ref{lem:fiberk}. 

If $C_f = \emptyset$, then $H^4(X,\z)$ is generated by the cup products of classes in $H^2(X,\z)$. If $X$ is diffeomorphic to $X'$, then the same holds for $X'$.
Therefore $\{x'^2,x'y'\}$ is a basis of $H^4(X',\z)$ (since $y'^2=0$). Poincar\' e duality implies $\langle x'^2y', [X'] \rangle = \pm 1$.  Then by Lemma \ref{lem:k} we have $k=1$ or $3$. 

If $C_f \ne \emptyset$, then the cup products of classes in $H^2(X,\z)$ generates $2 H^4(X,\z)$ (Proposition \ref{prop:h2}). If $X$ is diffeomorphic to $X'$, then the same holds for $X'$. In this case Poincar\' e duality implies $\langle x'^2y', [X'] \rangle = \pm 2$.  By Lemma \ref{lem:k} we have $k=1$ or $2$.

Let $\varphi \colon X \to X'$ be an orientation-preserving diffeomorphism.  Let $\varphi^*(y')=ax+ by$, with $a$, $b$ coprime integers.  Since $y'^2=0$, we have 
$$0=\varphi^*(y'^2) = (ax+by)^2=a^2x^2 + 2abxy + b^2y^2.$$
Notice that $y^2 \ne 0$, hence $a \ne 0$. By comparing with 
 equation (\ref{eq:smooth})
we have 
$$(\frac{b}{a})^2=c_2, \ \ \frac{2b}{a}=-c_1.$$
Sine $c_1$ and $c_2$ are integers or half integers, we must have  
\begin{equation}\label{eq:b}
a= \pm 1, \  c_1=\mp 2b,  \ c_2=b^2,\ \mathrm{and} \ c_1^2-4c_2=0
\end{equation}  

If $a=1$, let  $\widetilde x=x+by$; if $a=-1$, let $\widetilde x=x-by$. Then $\widetilde x$ and $y$ form a basis of $H^2(X,\Z)$, and 
\begin{equation}\label{eq:phi}
\varphi^{-1*}(\widetilde x)= \pm y', \ \ \varphi^{-1*}(y)=\pm x'+qy'
\end{equation}
for some integer $q$. We have
\begin{equation}\label{eq:28}
\widetilde x^2=\varphi^*(y'^2)=0, \ \ y^3=0, \ \  \widetilde x y^2 =  \pm y' (\pm x'+qy')^2 = \pm   x'^2 y'   = \pm 1.
\end{equation}
Therefor the cohomology ring in even degrees $H^{2*}(X,\z)$ is isomorphic to $H^{2*}(\p^2 \times \p^1, \Z)$. So is $H^{2*}(X',\Z)$.

Recall that if $C_f = \emptyset$, then $k=1$, $3$, and $H^3(X,\z)=0$. If $k=3$, then by the fact that $H^*(X',\z)$ is isomorphic to  $H^*(\p^2 \times \p^1,\z)$, Theorem \ref{thm:k3} shows that $X'$ is oriented diffeomorphic to $ \p^2 \times \p^1$. To exclude the case $k=1$, we compare the first Pontrjagin class of $X$ and $X'$. By equation (\ref{eq:class}), 
$$p_1(X)=  (c_1^2-4c_2)+ \langle c_1(Y)^2-2c_2(Y), [Y] \rangle y^2=3y^2.$$ 
Therefore 
$$p_1(X')=\varphi^{-1*}(p_1(X))=3(\pm x'+qy')^2=3x'^2 \pm 6q x'y'.$$ 
But we see from Lemma \ref{lem:s} and  table (\ref{eq:table}) that for $k=1$, in the expression of $p_1(X')$, the coefficient of $x'^2$ equals to $3(1-r)/(9-r) \ne 3$ for $3 \le r \le 8$.  This shows $k \ne 1$. 

If $C_f \ne \emptyset$, since we have shown that $c_1$ and $c_2$ are integers, Proposition \ref{prop:h2} implies that the cup products of elements in $H^2(X,\z)$ generate $2H^4(X,\z)$. Therefore the same holds for $X'$, and  Poincar\' e duality implies $\langle x'^2y', [X]\rangle = 2$. By Lemma \ref{lem:k}, $k$ equals $2$ or $1$, with the smooth fibre $F$ diffeomorphic to $\p^1\times \p^1$ or $\p^2 \#r\overline{\p^2}$ ($r=5$ or $7$), respectively. 
By Proposition \ref{prop:top2} (5) and equation (\ref{eq:smooth}), we may write $$p_1(X)=3c_1xy + py^2, \ p \in \z.$$ 
By the Hirzebruch-Riemann-Roch formula, 
$$\langle c_1(X)^3-c_1(X)p_1(X), [X] \rangle =48\chi(\mathcal O_X) =48\chi(\mathcal O_Y)=48.$$ 
From $\langle xy^2, [X] \rangle =2$ and equation (\ref{eq:b}) we get
\begin{equation}\label{eq:p}
p=3-c_2-2c_1^2=3-9b^2.
\end{equation}
Recall that $\widetilde x = x \pm b y$ and $\varphi^{-1*}(\widetilde x)= \pm y'$. Now
\begin{eqnarray*}
\varphi^{-1*}(p_1(X)) & = &\varphi^{-1*}(3c_1xy+py^2) \\
  & = & \varphi^{-1*}(3c_1\widetilde x y+(p \mp 3bc_1)y^2)  \\
  & = & (p \mp3bc_1)(\pm x'+qy')^2 + 3c_1y'(\pm x'+qy') \\
  & = & (p \mp 3bc_1)x'^2 + \cdots  
\end{eqnarray*}
Since $c_1= \mp 2b$, from equation (\ref{eq:p}) we have $p \mp 3bc_1=3-3b^2$.
Comparing this with the expression $p_1(X') = sx'^2+ tx'y'$, where value of $s$ is given in table (\ref{eq:table}), one sees that there are three possibilities, corresponding to the smooth fiber diffeomorphic to $\p^1\times \p^1$, $\p^2 \# 7\overline{\p^2}$ or $\p^2 \# 5\overline{\p^2}$.
\begin{enumerate}
\item $s=0$, $b = \pm 1$, $k=2$. Then from the fact $D_Y(2c_1)=-[C_f]$ (Proposition \ref{prop:top2} (3), note here $c_1$ equals twice of the $c_1$ in Proposition \ref{prop:h2}) we see $c_1=-2$, $c_2=1$. Then a direct calculation shows 
$$ \langle c_1(X)^3, [X] \rangle + 3 e(X) -72 \chi( \oo_X) + \frac{5}{2} [C_f] \cdot [C_f]=24+6-72+40=-2.$$
This is inconsistent with Proposition \ref{prop:top2} (4).

\item $s=-9$, $b=\pm 2$, $k=1$, the smooth fibre is $\p^2\# 7 \overline{\p^2}$.  Then $c_1=-4$, $c_2=4$, $p=-33$. By Proposition \ref{prop:top2} (3), $D_Y([C_f])=8y$, then by Proposition \ref{prop:top2} (2), $b_3(X)=64-24=40$. Note that the cohomology ring $H^*(X)$, and the characteristic classes $w_2(X)$ and $p_1(X)$ are determined by these data. Hence there exist at most one diffeomorphism type. By the discussion before this theorem, an MFS in this diffeomorphism type is provided by $\mathbb X$.

\item $s=-3$, $b^2 =2$, which is not possible.
\end{enumerate}

\end{proof}

\section{Appendix: some auxiliary lemmas}\label{sec:a}

\begin{lem}\label{lem:poincare}
Let $F$ be a smooth fibre of $f \colon X \to Y$ and $\sigma_Y \in H^4(Y,\Z)$ be the fundamental class of $Y$. Then $f^*(\sigma_Y)$ equals to the Poincar\' e dual of $[F]$ in $X$.  
\end{lem}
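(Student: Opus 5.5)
The plan is to identify $f^*(\sigma_Y)$ with the Poincaré dual of the fibre class through naturality of the cap product. Let $\mathrm{PD}_X \colon H_2(X,\z) \to H^4(X,\z)$ denote Poincaré duality, inverse to $D$ of Proposition \ref{prop:top2} on the relevant groups. Fix a regular value $y_0 \in Y$ with $F = f^{-1}(y_0)$. Because $f$ is a submersion near $F$, there is a small closed ball $B \subset Y$ around $y_0$ over which $f$ is a trivial bundle, $f^{-1}(B) \cong B \times F$. The class $\sigma_Y$ is the image of the Thom (orientation) class $\tau_B \in H^{n}(Y, Y\setminus \mathrm{int} B)$ under the restriction $H^n(Y,Y\setminus \mathrm{int}B) \to H^n(Y)$. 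Here $n = \dim_{\mathbb R} Y$, so $n = 4$ or $2$ in the two situations where the lemma is used.

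The first step is naturality of pullback: $f^*(\sigma_Y)$ is the image of $f^*\tau_B \in H^n(X, X\setminus f^{-1}(\mathrm{int} B))$. Under the trivialization $f^{-1}(B)\cong B\times F$ and excision, $f^*\tau_B$ is exactly the Thom class of the (trivial) normal bundle of $F$ in $X$. To make this identification, I will orient the normal bundle via $df$, which is compatible with the complex orientations.

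The second step applies the standard fact that the image of the Thom class of the normal bundle of a closed oriented submanifold $F \subset X$ in $H^n(X)$ is the Poincaré dual of $[F]$. This follows from the commutative diagram relating $\cap [X]$, excision to a tubular neighbourhood, and the Thom isomorphism $H_*(F) \cong H_{*+n}(\nu, \partial \nu)$, as in Bredon or Milnor–Stasheff. Combining the two steps gives $f^*(\sigma_Y) = \mathrm{PD}_X[F]$.

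For the applications in the paper (e.g.\ $f_*D(c_2)=0$), I will also record the companion statement that $f_*\bigl(f^*(\alpha)\cap[X]\bigr) = \alpha \cap f_*[X] = 0$, since $f_*[X]=0$ for dimensional reasons. This comes from the projection formula.

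The main obstacle is orientation bookkeeping: making sure the sign is $+1$ rather than $-1$. I will handle this by evaluating both sides against a test class. For a class $\alpha$ with $\langle \alpha, [F]\rangle$ nonzero, e.g.\ $c_1(X)$ restricted to $F$, both $\langle \alpha \cup f^*\sigma_Y, [X]\rangle$ and $\langle \alpha, [F]\rangle$ can be computed locally in $B\times F$ with product complex orientations, where they visibly agree.
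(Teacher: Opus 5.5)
Your proposal is correct and follows the same route as the paper. Both arguments realize $\sigma_Y$ as the image of the local orientation class of a small disk $B$ around a regular value, use excision to $f^{-1}(B)\cong B\times F$ to identify $f^*$ of that class with the Thom class of the trivial normal bundle of $F$, and then invoke the standard fact that this Thom class maps to the Poincar\'e dual of $[F]$.

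Your orientation check and the projection-formula remark $f_*(f^*\alpha\cap[X])=\alpha\cap f_*[X]=0$ are harmless refinements that the paper leaves implicit. One small fix: when $\dim Y=1$ the test class must have degree $4$, so use $c_1(X)^2$ rather than $c_1(X)$.
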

\begin{proof}
Let $F=f^{-1}(y)$, $D \subset Y$ be a closed $4$-disk centered at $y$. Then $N =f^{-1}(D)$ is a tubular neighborhood of $F$ in $X$. We have the following commutative diagram
$$\xymatrix{
H^4(X,\z) & H^4(X, X-\mathrm{int} N,\z) \ar[r]^{\ \ \ \ h}_{\ \ \ \ \cong} \ar[l]_{j^* \ \ \ \ }  & H^4(N, \partial N,\z) \\
H^4(Y,\z) \ar[u]^{f^*} & H^4(Y, Y- \mathrm{int} D,\z) \ar[u]^{f^*} \ar[l]_{j^* \ \ \ \ }^{\cong \ \ \ \ \ \  } & }
$$ 
where $h$ is the excision isomorphism. Now on the one hand, $hf^*(j^*)^{-1}(\sigma_Y)$ is the Thom class $U(\nu)$ of the normal bundle of $F$, on the other hand, $j^*h^{-1}(U(\nu))$ is the Poincar\' e dual of $[F]$. This finishes the proof of the lemma.
\end{proof}

We recall some basic notions of unimodular quadratic forms over the integers. Let $V$ be a unimodular quadratic form over $\z$. The \emph{norm} of a vector $x \in V$ is $x^2= x \cdot x$. The \emph{type} of $V$ is \emph{even} if $x^2$ is even for all $x \in V$, otherwise the type of $V$ is \emph{odd}. If $V$ is of even type, the type of a vector $x$ is \emph{characteristic} if $x \cdot y \equiv y^2 \pmod 2$ for all $y \in V$, otherwise the type of  $x$ is \emph{ordinary}. Wall showed in \cite{Wall62} that for a unimodular quadratic form which is neither definite nor nearly definite, the action of the orthogonal group on primitive vectors of given norm and type is transitive. 

\begin{thm}\label{thm:quadr}
Let $V$ be a unimodular quadratic form with $\mathrm{rank} V - |\mathrm{sign}(V)| \ge 4$, or $V = (1) \oplus q (-1)$ with $q \le 7$. If $v$, $v' \in V$ are two primitive vectors of the same norm, and of the same type if $V$ is of type odd, then there exists an isometry $f \colon V \to V$ such that $f(v)=v'$.
\end{thm}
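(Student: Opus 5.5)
This is Wall's transitivity theorem (\cite{Wall62}) for unimodular lattices, so the plan is to reduce to the standard forms and cite Wall, rather than reprove it from scratch.

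\textbf{Step 1: reduce to a hyperbolic-plus-complement model.} By the classification of indefinite unimodular forms (\cite{MH73}), an odd indefinite $V$ is isometric to $a(1)\oplus b(-1)$. An even indefinite $V$ is isometric to $m H \oplus n E_8$ up to sign. The hypothesis $\mathrm{rank}\, V-|\mathrm{sign}(V)|\ge 4$ says $\min(a,b)\ge 2$ in the odd case and $m\ge 2$ in the even case. So in both cases $V \cong H\oplus H\oplus W$ with $H$ the hyperbolic plane, or $V\cong (1)\oplus(-1)\oplus(1)\oplus(-1)\oplus W$ in the odd case.

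\textbf{Step 2: move $v$ into a small normal form.} Let $v$ be primitive. By unimodularity there is $w$ with $v\cdot w=1$. Using Eichler transvections (Siegel transformations) $E_{e,a}(x)=x+(x\cdot e)a-(x\cdot a)e-\tfrac12(a\cdot a)(x\cdot e)e$, for isotropic $e$ in one hyperbolic summand and $a\perp e$, I will show that $v$ can be moved into $H\oplus H$. Then, inside $H\oplus H$ (or the odd analogue), I will move it to one of the standard vectors:
\begin{itemize}
\item $e_1+\tfrac{n}{2}f_1$, when $v$ has even norm $n$ and is not characteristic;
\item $e_1+\tfrac{n-1}{2}f_1 + (1,0)$-type vectors, in the odd or characteristic cases.
\end{itemize}
The main obstacle is this step. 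Following Wall's argument, it will be done by induction on $|v\cdot e|$ via a Euclidean algorithm, applying transvections to decrease the coefficients. Here the second hyperbolic summand is essential; this is exactly where the rank hypothesis is used. The type is preserved by isometries, because the characteristic element is unique mod $2$. Hence two vectors of the same norm and type reach the same normal form.

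\textbf{Step 3: the small definite-like cases.} For $V=(1)\oplus q(-1)$ with $q\le 7$, Step 2 fails, since there is only one "positive" direction. Here I would use Wall's explicit treatment: the orthogonal complement $v^\perp$ of a primitive $v$ is a lattice of rank $q$ and discriminant $|v\cdot v|$. For $q\le 7$ the genus of such lattices contains one class and $O(v^\perp)$ surjects onto the discriminant group automorphisms. I would then glue the isometry $v^\perp\to v'^\perp$ with $v\mapsto v'$ to an isometry of $V$ via Nikulin-style gluing. I expect this to require citing \cite[p.~337]{Wall62} rather than a self-contained proof. That is consistent with the remark preceding the theorem that for $q\ge 8$ transitivity fails (e.g.\ because $E_8$ appears in complements).
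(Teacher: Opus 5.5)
\textbf{Steps 1--2.} These amount to what the paper does. The paper gives no argument for this statement and simply attributes it to Wall \cite{Wall62}, and for $\mathrm{rank}\,V-|\mathrm{sign}(V)|\ge 4$ that citation is correct. If you want to carry out Step 2 yourself, three details need fixing:
\begin{enumerate}
\item The Eichler transvection $E_{e,a}$ is integral only when $a\cdot a$ is even, which matters when $V$ is odd.
\item A characteristic $v$ has $v\cdot f_1$ even, so no vector of the form $e_1+\cdots$ is characteristic. The characteristic normal form should look like $2e_1+\frac{n-\sigma}{4}f_1+w$, with $w$ characteristic in the complement.
\item $2(1)\oplus 2(-1)\cong H\oplus(1)\oplus(-1)$ has no second hyperbolic summand, so there you must work with the odd block $(1)\oplus(-1)$ instead.
\end{enumerate}

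\textbf{Step 3.} This is the genuine gap, and it cannot be closed, because the nearly definite clause is false as stated. Your key claims are that the genus of $v^\perp$ has one class and that $O(v^\perp)$ surjects onto the isometries of the discriminant form. Neither can hold uniformly, since $|v\cdot v|$ is unbounded. Moreover, for $v\cdot v=0$ the complement $v^\perp$ contains $v$ and is degenerate, so there is nothing to glue.

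Here is a concrete counterexample. Take $V=(1)\oplus(-1)$, $v=(8,7)$ and $v'=(4,1)$. Both are primitive of norm $15$. Both are ordinary, since characteristic vectors of $V$ have both coordinates odd. An isometry must permute the isotropic lines through $(1,\pm1)$ and preserve $(1,1)\cdot(1,-1)=2$. This forces $O(V)=\{\mathrm{diag}(\pm1,\pm1)\}$, so no isometry maps $v$ to $v'$. In your language, $v^\perp\cong v'^\perp\cong(-15)$, but the two gluing maps on $\mathbb{Z}/15$ differ by the unit $\pm4$. That unit is not induced by $O((-15))=\{\pm1\}$, which is exactly the surjectivity you assumed.

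The class-number claim fails as well. In $(1)\oplus2(-1)$, the vectors $v=(4;1,1)$ and $v'=(8;1,7)$ are primitive, ordinary and of norm $14$. However, $v^\perp\cong(-2)\oplus(-7)$ does not represent $-1$, while $v'^\perp$ contains $(1;1,1)$ of norm $-1$, so $v^\perp\not\cong v'^\perp$.

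So only the first clause, Wall's theorem, is provable. For $(1)\oplus q(-1)$ one can hope only for transitivity in restricted norms or types. The paper supports this clause by nothing but the citation to \cite[p.~337]{Wall62}, so you should not rely on it in that range.
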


This theorem is essentially used in the proof of Theorem \ref{thm:dim2singular}. In the proof of Theorem \ref{thm:dim2smooth} we need a mod $2$ version of Wall's theorem. 

\begin{lem}\label{lem:quadratic}
Let $Y$ be a compact complex surface, $V$ be the intersection form of $Y$. If $v$, $v' \in V$ are two primitive vectors such that $v^2 \equiv v'^2 \pmod 4$, and of the same type if $V$ is of type odd. Then there exists an isometry $f \colon V \to V$ such $f (v) -v' \in 2V$.  
\end{lem}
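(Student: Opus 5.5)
The plan is to reduce to Wall's transitivity theorem (Theorem \ref{thm:quadr}) whenever it applies, and to treat the few remaining surfaces $Y$ by hand. Since $Y$ is a compact complex surface, $V=H^2(Y,\z)$ is unimodular. If $V$ is definite, then $Y=\p^2$ and $V=(1)$. In that case $v,v'=\pm 1$ and the isometry $\pm\mathrm{id}$ does the job. Otherwise $V$ is indefinite, and by the classification of indefinite unimodular forms it is $p(1)\oplus q(-1)$ when odd and $aE_8\oplus bH$ (up to sign) when even. The idea is that we only need $f(v)\equiv v' \pmod 2$, not $f(v)=v'$. So we will replace $v'$ by a primitive vector $w$ with $w\equiv v' \pmod 2$, $w^2=v^2$, and $w$ of the same type as $v'$. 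We then apply Theorem \ref{thm:quadr} to $v$ and $w$ to get $f$ with $f(v)=w$, and hence $f(v)-v'=w-v'\in 2V$.

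The key step is constructing $w$. The candidates are $w=v'+2z$ with $z\in V$, which satisfy $w^2=v'^2+4(v'\cdot z+z^2)$. The congruence $v^2\equiv v'^2 \pmod 4$ means we need $v'\cdot z+z^2=(v^2-v'^2)/4=:m$ for an arbitrary integer $m$. Since $v'$ is primitive and $V$ is unimodular, there exists $e$ with $v'\cdot e=1$. Adding suitable elements of $v'^{\perp}$ should adjust $z^2$ freely within its parity class, because $v'^\perp$ is indefinite of large enough rank. More concretely, take $z=\lambda e+h$ with $h$ in a hyperbolic or $(1)\oplus(-1)$ summand orthogonal to $v'$ and $e$. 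The quantity $v'\cdot z+z^2$ then takes the values $\lambda+\lambda^2e^2+h^2+2\lambda\, e\cdot h$, and since $h^2$ can take any even value (or any value in the odd case), every integer $m$ is reached. Primitivity of $w$ needs to be checked separately; it holds because $w\equiv v'\pmod 2$ and $v'$ is primitive mod $2$, so $w$ is not divisible by $2$. Divisibility by odd primes can be avoided by varying $h$ further, for example by arranging $w\cdot e'=\pm1$ for some $e'$. Type is preserved because the type of a vector is determined by its reduction mod $2$: $w$ is characteristic iff $w\cdot y\equiv y^2$ for all $y$, which depends only on $w \bmod 2$.

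Finally, Theorem \ref{thm:quadr} requires $\mathrm{rank}V-|\mathrm{sign}V|\ge4$ or $V=(1)\oplus q(-1)$ with $q\le7$. This leaves the hyperbolic forms $H$ and $(1)\oplus(-1)$, and the nearly definite forms $(1)\oplus q(-1)$ with $q\ge 8$ together with their negatives. The rank-$2$ cases are handled directly: a primitive vector $(a,b)$ with prescribed norm mod $4$ and type can be moved mod $2$ by the finitely many automorphisms ($\pm$, swap), and there are only four classes mod $2$ to check. For $(1)\oplus q(-1)$ with $q\ge8$, Wall's transitivity fails, so there I would instead work directly mod $2$. The orthogonal group surjects onto the orthogonal group of the mod-$2$ reduction (preserving the characteristic vector), and that group acts transitively on nonzero vectors with fixed quadratic refinement value, which is exactly $v^2\bmod 4$ together with the type.

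I expect this nearly definite case, i.e.\ justifying the mod $2$ transitivity (the surjectivity onto the finite orthogonal group and the matching of the $\z/4$-valued invariant), to be the main obstacle. I would first try to verify it by writing explicit reflections in vectors of norm $\pm1,\pm2$, which generate the relevant mod-$2$ group.
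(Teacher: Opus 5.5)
Your overall plan (Theorem \ref{thm:quadr} plus a mod-$2$ adjustment, with the bad forms treated separately) is reasonable, but it has genuine gaps exactly where Theorem \ref{thm:quadr} does not apply. First, your list of leftover forms is incomplete. The even forms $H\oplus k(\pm E_8)$ with $k\ge 1$ have $\mathrm{rank}\,V-|\mathrm{sign}\,V|=2$, so Theorem \ref{thm:quadr} does not cover them either. They have to be excluded using that $V$ is the form of a complex surface; the paper does this with the $11/8$-inequality for complex surfaces, which forces $V=H$ whenever $V=rH\oplus sE_8$ with $r<2$.

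Second, for $(1)\oplus q(-1)$ with $q\ge 8$ you only assert two things: that $O(V)\to O(V_2,\alpha)$ is surjective, and that $O(V_2,\alpha)$ is transitive on non-zero ordinary classes with fixed $\alpha$. Neither is proved, and together they are the entire content of the lemma in that range. Surjectivity is also more than you need. What is needed is only that the image of $O(V)$ acts transitively on ordinary classes with given $\alpha$. Note that reflections in vectors of norm $\pm1$ act trivially on $V_2$, so they cannot help. The paper proves the needed transitivity by hand. It uses reflections $\sigma_u(v)=v+(v\cdot u)u$ in vectors of norm $-2$, such as $u=(1;0,1,1,1,0,\dots,0)$, $(1;1,1,0,\dots,0,1)$ and $(2;1,1,1,1,1,0,\dots,0,1)$, together with permutations of the negative coordinates. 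These reduce every ordinary class to $(0;1,\dots,1,0,\dots,0)$ with at most four $1$'s, whose number is then fixed by $\alpha$; small $q$ are checked directly.

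Within the range of Theorem \ref{thm:quadr}, your construction of $w=v'+2z$ is also incomplete. You assert, without proof, that there is a summand $P\cong H$ or $(1)\oplus(-1)$ orthogonal to $v'$ and $e$, and whether it exists depends on the choice of $e$. For example, in $H\oplus H$ with hyperbolic bases $\{a_i,b_i\}$, the vectors $v'=a_1+b_1$ and $e=b_1+a_2+b_2$ satisfy $v'\cdot e=1$. They span a positive definite plane whose orthogonal complement is negative definite, so no such summand exists. In addition, $h^2$ on $(1)\oplus(-1)$ is never $\equiv 2 \pmod 4$, so your parenthetical claim is false and you must use $\lambda\ne0$ there. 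Primitivity of $w$ at odd primes is only hand-waved.

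These points can be repaired by first using Theorem \ref{thm:quadr} to move $v'$ into a standard summand. At that point it is simpler to argue as the paper does. For each value of $\alpha$ and type, fix one class in $V_2$ and write down explicit primitive lifts of every admissible norm. Examples are $(2k+1,0,\dots;2k,0,\dots)$ of norm $4k+1$ in $p(1)\oplus q(-1)$, and $(1,2k,0,\dots)$ of norm $4k$ in $rH\oplus sE_8$. Then use Theorem \ref{thm:quadr} to send both $v$ and $v'$ onto such lifts.
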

\begin{proof}
Let $V_2 = V \otimes \z/2$ be the mod $2$ reduction of $V$. For any $x \in V_2$, let $v$ be a lift of $x$ in $V$, then $\alpha (x) = v^2 \pmod 4$ is an invariant of $x$, and $\alpha(x)= 0$ or $2$ if $V$ is of even type. If $V$ is of odd type, we say the type of $x$ is characteristic or ordinary if  $v$  is characteristic or ordinary. This is independent of the choice of lifts. Then it suffices to show that for any non-zero vectors $x$, $x'$ in $V_2$ with $\alpha(x)=\alpha(x')$, and of the same type if $V$ is odd, there is an isometry $f \colon V \to V$ whose mod $2$ reduction maps $x$ to $x'$. 

If $V$ if of odd type, then by the classification of unimodular quadratic forms (\cite{MH73}), $V$ is isometric to an orthogonal sum of $(1)$'s and $(-1)$'s, i.e., we may assume $V=p(-1) \oplus q(-1)$, with $p$, $q \ge 1$ (The only simply connected compact complex surface with definite intersection form is $\p^2$). The characteristic vector of $V_2$ is $(1, \cdots , 1)$, which is fixed by any isometry of $V$. In the following we consider the case where $x$ are $x'$ are ordinary.

We first assume $p$, $q \ge 2$. In this case $\mathrm{rank}V - |\mathrm{sign}(V)| \ge 4$. Let $x$, $x'$ be ordinary with $\alpha(x)=\alpha(x')=1$. Let $v$ be a primitive lift of $x$ with $v^2 =4k+1$, let $v_0 = (2k+1, 0 \cdots, 0; 2k, 0 \cdots, 0)$. Then by the above theorem, there is an isometry $f \colon V \to V$ with $f(v)=v_0$. The mod $2$ reduction of $f$ maps $x$ to $(1, 0 \cdots, 0)$. The same applies to $x'$. For other values of $\alpha(x)$ the proof is similar. 

The remaining case is $V=(1) \oplus q(-1)$. We shall make use of some explicit isometries in the sequel. Let $u \in V$ be a vector with $u^2=-2$, then 
$$\sigma_u \colon V \to V, \ \ \sigma_u(v)=v+(v\cdot u)u$$ 
is an isometry. 

Assume $q \ge 4$.
\begin{enumerate}
\item For $x=(1;1, 0, \cdots, 0 )$, let $u=(1; 0,1,1,1,0,\cdots , 0)$, then the mod $2$ reduction of the isometry $\sigma_u$ maps $x$ to $(0; 1,1,1,1,0,\cdots, 0)$.

\item For $x=(1;1, \cdots, 1, 0, \cdots 0)$, where the number of $1$'s in the negative definite part is $k \ge 2$, and there is at least one (say, the last one) coordinate is $0$ since $x$ is not characteristic, let $u=(1; 1,1,0, \cdots, 0,1)$, then the mod $2$ reduction of $\sigma_u$, composed with a permutation of the coordinates of the negative definite part, maps $x$ to $(0;1,\cdots , 1, 0, \cdots , 0)$, where the the number of $1$'s is $k-2$.

\item For $x=(0;1, \cdots ,1, 0, \cdots, 0)$ with the number of $1$'s equals to $l \ge 5$, note that $\alpha(x) \equiv -l \pmod 4$.  Let $u=(2;1,\cdots, 1,0, \cdots, 0,1)$, where the first five coordinates in the negative definite part are $1$, then the mod $2$ reduction of $\sigma_u$, composed with a permutation of the coordinates of the negative definite part, maps $x$ to $(0;1, \cdots, 1, 0, \cdots, 0)$ with the number of $1$'s equal to $l-4$.
\end{enumerate}

Combining the above analysis, it is shown that the action of the orthogonal group of $V$ on ordinary vectors in $V_2$ with the same value of $\alpha$ is transitive. 

If $q=3$, then the vector $x=(1;1,0,0)$ has $\alpha(x)=0$; the vector $x=(1;1,1,0)$ is mapped as in (2) by an isometry to $(0;1,0,0)$ with $\alpha=-1$. The vector $x=(0;1,1,0)$ has $\alpha=-2$ and the vector $(0;1,1,1)$ has $\alpha=-3$. These are all the possibilities. The case $q=1$ or $q=2$ is trivial.

If $V$ is of even type, then it is isometric to an orthogonal sum of the hyperbolic forms $H$ and the $E_8$ forms, $V = rH \oplus sE_8$. The $11/8$-conjecture is known to be true for complex surfaces (\cite[p.378]{BHPV}), which implies that if $r < 2$ then $V=H$. The proof of this case is trivial. Now assume $r \ge 2$, so that $\mathrm{rank}V - |\mathrm{sign}(V)| \ge 4$. If $\alpha(x)=0$, let $v$ be a lift of $x$ with $v^2=4k$, $v_0=(1, 2k, 0 \cdots, 0)$. Then by the above theorem, there is an isometry $f \colon V \to V$ with $f(v)=v_0$, whose mod $2$ reduction maps $x$ to $(1, 0 \cdots, 0)$. The proof for $\alpha(x)=2$ is similar.   
\end{proof}

The following lemma is the Miyaoka-Yau inequality for simply-connected surfaces in general. Though it is a standard fact, we could not find a definitive citation.  We provide a proof here.
  
\begin{lem} \label{lem:my} For a simply connected algebraic surface $S$, the Miyaoka-Yau inequality $3c_2(S)\geq c_1^2(S)$  holds true, with equality if and only if $S$ is isomorphic to the complex projective plane
 $\p^2$.  
\end{lem}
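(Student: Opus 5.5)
The plan is to split according to the Kodaira dimension of $S$ and the Enriques--Kodaira classification. Since $S$ is simply connected, $q(S)=0$, so $\chi(\oo_S)=1+p_g(S)\ge 1$. Noether's formula gives $c_1^2(S)+c_2(S)=12\chi(\oo_S)$. The inequality $3c_2\ge c_1^2$ is then equivalent to $c_2(S)\ge 3\chi(\oo_S)$, or equivalently $c_1^2(S)\le 9\chi(\oo_S)$. I will check this in each Kodaira dimension.

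\textbf{Surfaces of general type.} Let $S_{\min}$ be the minimal model. The classical Miyaoka--Yau inequality gives $c_1^2(S_{\min})\le 3c_2(S_{\min})$. Each blow-up lowers $c_1^2$ by one and raises $c_2$ by one, so the inequality persists on $S$. It is strict as soon as $S$ is not minimal. If equality holds on a minimal surface, the Yau / Miyaoka characterization says the universal cover is the ball. Such an $S$ would have infinite fundamental group, which contradicts simple connectivity. So equality is impossible in this case.

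\textbf{Kodaira dimension $0$ and $1$.} For a minimal model in these cases $K$ is nef with $K^2=0$, so $c_1^2(S_{\min})=0$. Also $c_2(S_{\min})=12\chi(\oo)\ge 12>0$, using $\chi(\oo_S)\ge 1$. Blowing up only decreases $c_1^2$ and increases $c_2$, so the inequality is strict. In Kodaira dimension $0$ the simply connected case is in fact only K3, where $c_2=24$ and $c_1^2=0$.

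\textbf{Kodaira dimension $-\infty$.} Here $S$ is simply connected and ruled, hence rational. Its minimal model is $\p^2$ or a Hirzebruch surface $\mathbb F_n$, and $\chi(\oo_S)=1$. For $\p^2$ we have $c_1^2=9$, $c_2=3$, so equality holds. For $\mathbb F_n$ we have $c_1^2=8$, $c_2=4$, which is strict. A blow-up of $\p^2$ or $\mathbb F_n$ has $c_1^2=9-r$ and $c_2=3+r$ with $r\ge1$, again strict.

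\textbf{Equality case.} Combining the cases, equality forces $S$ to be rational with $c_1^2=9$ and $c_2=3$, i.e.\ $b_2=1$. The only rational surface with $b_2=1$ is $\p^2$, since the minimal rational surfaces other than $\p^2$ have $b_2=2$ and blow-ups increase $b_2$. Conversely, $\p^2$ attains equality.

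The main obstacle is the equality case for surfaces of general type. This is the step that needs the ball-quotient theorem rather than just the inequality itself, and I will cite Yau and Miyaoka there. Everything else is bookkeeping with Noether's formula and the behaviour of $c_1^2,c_2$ under blow-ups.
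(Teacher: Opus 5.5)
Your proposal is correct and takes essentially the same route as the paper. Both arguments pass to the minimal model using $c_1^2\mapsto c_1^2-1$, $c_2\mapsto c_2+1$ under blow-ups, run through the Enriques--Kodaira classification, and check the rational minimal models by hand. Where the paper simply asserts that the inequality is strict for $\kappa(S)\ge 0$, you supply the missing justification: Noether's formula with $\chi(\oo_S)=1+p_g\ge 1$ for $\kappa=0,1$, and the ball-quotient characterization (forcing infinite $\pi_1$) to exclude equality in general type. Your list of minimal rational surfaces ($\p^2$ and all $\mathbb F_n$, rather than only $\p^1\times\p^1$) is also the accurate one.
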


\begin{proof}  It suffices to prove the inequality for a minimal surface. For a simply connected minimal surface $S$, the case of Kodaira dimension $\kappa (S)  \ge 0$ is already known to satisfy $c_1^2(S) < 3c_2(S)$. 
According to the Enriques-Kodaira classification of algebraic surface \cite[P.188, Table 10]{BHPV}, a simply connected minimal algebraic surface $S$ with $k(S)=-\infty$ is isomorphic to either $\p^2$ or $\p^1 \times \p^1$. A direct verification shows that all surfaces appearing in this list satisfy $c_1^2(S) \le 3c_2(S)$. Moreover, equality $c_1^2(S)=3c_2(S)$ holds if and only  if  $S \cong \p^2$. 

Now suppose $S$ is a (not necessarily minimal) surface for which we want to verify the inequality. Let $\widetilde S$ be its minimal model. If $\widetilde S$ satisfies $c_1^2(\widetilde S)<3c_2(\widetilde S)$, then applying the blow-up formula $c_2(S)= c_2(\widetilde S)+m$ and $c_1^2(S) = c_1^2(\widetilde S) -m$ for some $m \ge 0$ \cite[Proposition II.3]{Beau96} immediately yields $c_1^2(S) \le 3c_2(S)$ whenever $m \ge 1$. Thus, the inequality for all surfaces follows form the inequality for minimal surfaces.
\end{proof}

\begin{proof}[Proof of Theorem \ref{thm:dim2singular} for the case $b_2(Y) =1$.]
We give the details of the calculation showing $c_2=c_2'$.  By Proposition \ref{prop:top2} (5), the first Pontrjagin class $p_1(X)=py^2 + 3u^2$, $p \in \z$. Let $\Psi \colon H^2(X,\mathbb Q) \to \mathbb Q$ be the homomorphism, $\Psi(a)= \langle a \cup p_1(X), [X] \rangle$. Since $\Psi(y) = \langle y \cup p_1(X), [X] \rangle = 3c_1 \ne 0$,  we may take $v = u + \beta y$, $\beta \in \mathbb Q$, as a basis of $\ker(\Psi)$. Then $v$ and $y$ form a basis of $H^2(X,\mathbb Q)$, under which the cubic form on $H^2(X,\mathbb Q)$ is given by 
$$v^3 = \frac{1}{2}c_1^2+c_2+ 3 \beta c_1 + 6\beta^2, \ v^2y=c_1 + 4 \beta, \ vy^2=2, \ y^3 =0.$$
Let $g \colon X' \to X$ be an orientation preserving diffeomorphism. Since $\Psi(y)=3c_1=3c_1'=\Psi'(y')$, the induced isomorphism $g^* \colon H^2(X,\mathbb Q) \to H^2(X',\mathbb Q)$ satisfies 
\begin{equation}\label{eq:g}
g^*(v) = \lambda v', \ \ g^*(y)= y' + av', \ \lambda,  a \in \mathbb Q.
\end{equation}
Note that the transformation matrices between the bases $\{u,y\}$ and $\{v,y\}$, $\{u', y'\}$ and $\{v', y'\}$ are unimodular, and the matrix representing $g^* \colon H^2(X,\z) \to H^2(X',\z)$ under the bases $\{u,y\}$ and $\{u', y'\}$ is unimodular, therefore the matrix representing (\ref{eq:g}) is unimodular, which means $\lambda = \pm 1$. Moreover, we may assume $a \ne 0$, otherwise we are in the situation where $g^*(H^2(Y,\z) )= H^2(Y',\z)$, which has been handled.

Assume $\lambda =1$. Using the fact that $g^*$ is an isomorphism between the cubic forms, we have 
$$2=vy^2=v'(y'+av')^2=2 + 2av'^2y'+a^2v'^3,$$
from this we get $av'^3=-2v'^2y$,
\begin{equation}\label{eq:v2y}
c_1 + 4 \beta = v^2y=v'^2 (y'+av') = -v'^2y'=-(c_1+4\beta'),
\end{equation}
\begin{equation}\label{eq:v3}
\frac{1}{2}c_1^2+c_2+ 3 \beta c_1 + 6\beta^2 = v^3 = v'^3 = \frac{1}{2}c_1^2+c_2'+ 3 \beta' c_1 + 6\beta'^2.
\end{equation}
From (\ref{eq:v2y}) one has $3 \beta c_1 + 6\beta^2=3 \beta' c_1 + 6\beta'^2$. Hence by (\ref{eq:v3}) $c_2=c_2'$. The proof of the case $\lambda =-1$ is similar. 
\end{proof}

\begin{proof}[Proof of Theorem \ref{thm:dim2singular} for the case $b_2(Y) =2$.]
By choosing the symplectic bases appropriately, we may assume $g^*(e_1)=e_1'$. 
Write 
$$g^*(e_2)=ax'+be_1' +ce_2', \ a, b, c \in \mathbb Q$$
then $(ax'+be_1' +ce_2')^2=0$ (since $e_2^2=0$). By taking product with $e_1'$, $e_2'$ and $x'$, respectively, we have 
$$ac=0, \ ab=0, \ a^2x'^3-16bc=0.$$
Note that if $a=0$, then $g^*(H^2(Y, \z))=H^2(Y',\z)$, we reduce to the case which has been handled.  Now assume $a \ne 0$. Then $b=c=0$ and $x'^3=0$ (which implies $8c_2'-c_1'^2=0$ by (\ref{eq:cubic_sing})). By the symmetry of $X$ and $X'$, we also have $x^3=0$. Therefore
$$g^*(e_1)=e_1', \ g^*(e_2)=ax', \ g^*(x)=\alpha x'+\beta e_1' + \gamma e_2'.$$ 
The identities $x^2e_1=0$ and $x^2e_2=0$ imply 
$$(\alpha x'+\beta e_1' + \gamma e_2')^2 e_1'=0, (\alpha x'+\beta e_1' + \gamma e_2')^2 x'=0,$$
which further imply $\alpha \gamma=0$ and $\beta \gamma=0$. Note that $\gamma \ne 0$ since $g^*$ is an isomorphism. Therefore $\alpha=\beta=0$. From (see (\ref{eq:cubic_sing}))
$$-8 = x e_1 e_2 = g^*(x e_1 e_2) = a \gamma x' e_1' e_2' =-8 a \gamma $$
we get $a \gamma =1$. To simplify the notations we rename the coefficients and write
$$g^*(e_1)=e_1', \ g^*(e_2) = \alpha x', \ g^*(x)=\beta e_2'$$
with $\alpha \beta =1$.

Denote $c_1=a e_1 + b e_2$, $c_1'=a'e_1'+b'e_2'$. Then by Proposition \ref{prop:top2} (5) 
$$p_1(X) e_1 = 3u^2 e_1 = \frac{3}{2}(c_1u-c_2)e_1=\frac{3}{2}bue_1e_2, \ p_1(X)e_1'=\frac{3}{2}b'u'e_1'e_2'.$$
Since $g^*(p_1(X)e_1)=p_1(X') e_1'$ and $ue_1e_2=u'e_1e_2' \ne 0$, we have $b=b'$. The image of $u$ is  
$$g^*(u)=g^*(\frac{1}{4}(x+c_1))=\alpha bu'+\frac{1}{4}(a-a'b\alpha)e_1'+\frac{1}{4}(\beta-bb'\alpha)e_2'$$
Then we get (using the fact that $\alpha \beta =1$)
$$g^*(e_1e_2u)=e_1'\alpha x'g^*(u)=(1-2\alpha^2 bb')e_1'e_2'u'.$$
This implies $b=b'=0$. 
\end{proof}

\end{document}